\definecolor{dmagenta}{rgb}{.4,.1,.5}
\definecolor{dblue}{rgb}{.0,.0,.6}
\definecolor{ddblue}{rgb}{.0,.0,.4}
\definecolor{dred}{rgb}{.5,.0,.0}
\definecolor{dgreen}{rgb}{.0,.4,.0}
\definecolor{Eeom}{rgb}{.0,.0,.5}
\newcommand{\rd}[1]{{\color{red}#1}}
\renewcommand{\rd}[1]{{\color{black}#1}}
\newtheorem{lemma}{Lemma}[section]
\newtheorem{theorem}{Theorem}[section]
\newtheorem{proposition}{Proposition}[section]
\newtheorem{corollary}{Corollary}[section]
\theoremstyle{definition}
\newtheorem{definition}{Definition}[section]
\newtheorem{assumption}{Assumption}[section]
\theoremstyle{remark}
\newtheorem{remark}{Remark}[section]
\numberwithin{equation}{section}
\newcommand{\cA}{{\mathcal{A}}}  %used
\newcommand{\eom}{{\mathscr{G}}} %set of ergodic occupation measures
\newcommand{\sH}{{\mathscr{H}}}  %Half space for diffusion with constraints
\newcommand{\Lg}{\mathcal{L}}    %generator
\newcommand{\cL}{{\mathscr{L}}}  %generator
\newcommand{\cM}{{\mathcal{M}}}  %Invariant Measures
\newcommand{\cP}{{\mathcal{P}}}  %Probability measures
\newcommand{\sS}{{\mathscr{S}}}  %Integer Lattice of points
\newcommand{\sU}{{\mathscr{U}}}  %Set of Markov controls for diffusion
\newcommand{\Lyap}{{\mathcal{V}}}  %Lyapunov
\newcommand{\sV}{{\mathscr{V}}}  %used
\newcommand{\sX}{{\mathscr{X}}}  %JWC set
\newcommand{\cX}{{\mathcal{X}}}  % Polish Space
\newcommand{\cZn}{{\mathcal{Z}^n}} %Action Set
\newcommand{\bcZn}{{\Breve{\mathcal{Z}}^n}} %Action Set
\newcommand{\fZ}{{\mathfrak{Z}}} %Policies
\newcommand{\RR}{\mathds{R}}
\newcommand{\NN}{\mathds{N}}
\newcommand{\ZZ}{\mathds{Z}}
\newcommand{\Rd}{\mathds{R}^{d}}
\DeclareMathOperator{\Exp}{\mathbb{E}}
\DeclareMathOperator{\Prob}{\mathbb{P}}
\newcommand{\D}{\mathrm{d}}
\newcommand{\E}{\mathrm{e}}
\newcommand{\Act}{{\mathbb{U}}}
\newcommand{\Uadm}{\mathfrak{U}}
\newcommand{\Usm}{\mathfrak{U}_{\mathrm{SM}}}
\newcommand{\Ussm}{\mathfrak{U}_{\mathrm{SSM}}}
\newcommand{\sF}{\mathfrak{F}}  % sigma field
\newcommand{\Ind}{\mathds{1}}   % indicator function
\newcommand{\Cc}{\mathcal{C}}   %Continuous Functions
\newcommand{\abs}[1]{\lvert#1\rvert}
\newcommand{\norm}[1]{\lVert#1\rVert}
\newcommand{\babs}[1]{\bigl\lvert#1\bigr\rvert}
\newcommand{\babss}[1]{\biggl\lvert#1\biggr\rvert}
\newcommand{\transp}{^{\mathsf{T}}}
\newcommand{\df}{:=}
\DeclareMathOperator*{\osc}{osc}
\DeclareMathOperator*{\diag}{diag}
\newcommand{\order}{{\mathscr{O}}}
\newcommand{\sorder}{{\mathfrak{o}}}
\newcommand{\tc}{{\Breve\uptau}}
\newcommand{\qandq}{\quad\text{and}\quad}
\newcommand{\ttl}{\Large Infinite Horizon Average Optimality of the N-network\\[5pt]
Queueing Model in the Halfin--Whitt Regime}
\begin{document}
%%%%%%%%%%%%%%%%%%%%%%%%%%%%%%%%%%%%%%%%%%%%%%%%%%%%%%%%%%%%%%%%%%%%%
\allowdisplaybreaks

\title[Infinite Horizon Average Optimality of the N-network]
{\ttl}

\author{Ari Arapostathis$^\dag$}
\address{$^\dag$ Department of Electrical and Computer Engineering\\
The University of Texas at Austin\\
2501 Speedway St., EER 7.824\\
Austin, TX~~78712}
\email{ari@ece.utexas.edu}
\author{Guodong Pang$^\ddag$}
\address{$^\ddag$ The Harold and Inge Marcus Dept.
of Industrial and Manufacturing Eng.\\
College of Engineering\\
Pennsylvania State University\\
University Park, PA~~16802}
\email{gup3@psu.edu} 

%\date{\today}

\begin{abstract}
We study the infinite horizon optimal control problem for N-network queueing systems,
which consist of two customer classes and two server pools, under average (ergodic)
criteria in the Halfin--Whitt regime.  We consider three control objectives:
1) minimizing the queueing (and idleness) cost,
2) minimizing the queueing cost while imposing a constraint on idleness at each
server pool, and 3) minimizing the queueing cost while requiring fairness on idleness.
The running costs can be any nonnegative convex functions having at most
polynomial growth.

For all three problems we establish asymptotic optimality, namely, the convergence
of the value functions of the diffusion-scaled state process to the corresponding
values of the controlled diffusion limit.
We also present a simple state-dependent priority scheduling policy under
which the diffusion-scaled state process is geometrically ergodic in the
Halfin--Whitt regime, and some results on convergence of mean empirical measures
which facilitate the proofs. 
\end{abstract}

\subjclass[2000]{60K25, 68M20, 90B22, 90B36}

\keywords{parallel-server network, N-network, reneging/abandonment, 
Halfin--Whitt (QED) regime, diffusion scaling, 
long time average control, ergodic control,  ergodic control with constraints,
geometric ergodicity, 
stable Markov optimal control,  asymptotic optimality}

\maketitle

%%%%%%%%%%%%%%%%%%%%%%%%%%%%%%%%%%%%%%%%%%%%%%%%%%%%%%%%%%%%%%%%%%%%%
\section{Introduction}

Parallel server networks in the Halfin--Whitt regime have been very
actively studied
in recent years. Many important insights have been gained in their performance,
design and  control. 
One important question that has mostly remained open is optimal control under the
long-run average expected cost (ergodic) criterion. 
Since it is prohibitive to exactly solve the
discrete state Markov decision problem,
the plausible approach is to solve the control problem for the
limiting diffusion in the Halfin--Whitt regime and use this as an approximation. 
However, the results in the existing literature
for ergodic  control of diffusions
(see a good review in \citet{book}) cannot be directly
applied to the class of diffusion
models arising from the parallel server networks in the Halfin--Whitt regime. 
Recently, \citet{ABP14} and \citet{AP15} have developed the basic tools
needed to tackle this class of ergodic control problems. 

Given an optimal solution to the control problem for
the diffusion limit, the important task that remains is to show
it gives rise to a scheduling policy for the network and establish that
any sequence of such scheduling policies is asymptotically optimal in the
Halfin--Whitt regime.
Under the discounted cost criterion, this task has been accomplished in
\citet{atar-mandel-rei} for the multiclass \emph{V-model} (or \emph{V-network}),
 which consists of multiple customer classes
that are catered by servers in a single pool,
and in
\citet{Atar-05b} for multiclass multi-pool networks with certain tree topologies. 
Under the ergodic criterion, the problem becomes much more difficult because it is
intertwined with questions concerning the ergodicity of the diffusion-scaled
state process under the scheduling policies. 
This relates to various open questions on the stochastic stability
of parallel server networks in the Halfin--Whitt regime. 

\begin{wrapfigure}[14]{R}[0pt]{2.0in}
\centering
\includegraphics[width=2.0in]{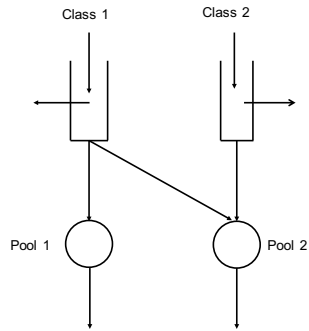}
\caption{The N-Network}
\label{N-model-plot}
\end{wrapfigure}
Stability of the multiclass V-model in the Halfin--Whitt regime is well treated
in \citet{gamarnik-stolyar}.
\citet{stolyar-15} has recently proved the tightness of the stationary distributions
of the diffusion-scaled state process for the so-called
\emph{N-network} (or \emph{N-model}), depicted in Figure~\ref{N-model-plot},
with no abandonment
under a static priority policy.  
For the V-network, \citet{ABP14} have shown that a sequence of scheduling
policies constructed from the optimal solution to the diffusion control problem
under the ergodic criterion is asymptotically optimal.
In this construction, the state space
is divided into a compact subset with radius in the order of the
square root of the number of servers around the steady state, and  its complement.
An approximation to the optimal control for the diffusion is used inside this
set, and a static priority policy is employed in its complement.
It follows from the results of \cite{ABP14} that under this sequence of
scheduling policies the state process is geometrically ergodic.
The proof of asymptotic optimality
takes advantage of the fact that, under the static priority scheduling policy,
the state process of the V-model in the Halfin--Whitt regime
is geometrically ergodic.
In fact, such a static priority policy for the V-model also corresponds to a
constant Markov control, under which the limiting diffusion is geometrically ergodic. 

However, for multiclass multi-pool networks, although the optimal
control problem for the limiting diffusion
has been thoroughly solved in \citet{AP15}, the lack of 
sufficient understanding of the stochastic stability properties of
the diffusion-scaled state process has been the critical obstacle to establishing
asymptotic optimality.
It is worth noting that this difficulty is
related to the so-called ``joint work conservation'' (JWC) condition which
plays a key role in the study of multiclass multi-pool networks as shown in
\citet{Atar-05a, Atar-05b}.
Although the JWC condition holds for the limiting diffusions over the entire
state space, it generally holds only in a bounded subset of the state space for
the diffusion-scaled process, whose
radius is in the order of the number of servers around the steady state. 
Thus, an optimal control derived from the limiting diffusion does not
translate well to a scheduling policy which is
compatible with the controlled dynamics of the network on the entire state space.
At the same time, although as shown in \cite{AP15} there exists a constant
Markov control under which the limiting diffusion of
multiclass multi-pool networks is geometrically ergodic, it
 is unclear if this is also the case for the diffusion-scaled
state processes under the corresponding static
priority scheduling policy. 
Therefore, the limiting diffusion does not offer much help in
the synthesis of a suitable scheduling policy on the part of the
state space where the JWC condition does not hold, and
as a result constructing stable policies for
multiclass multi-pool networks is quite a challenge. 

In this paper, we address these challenging problems for the
N-network.
We study three ergodic control problems: (P1) minimizing the queueing
(and idleness) cost, (P2) minimizing the queueing cost while imposing a 
constraint on the idleness of each server pool
(e.g., the long-run average idleness cannot exceed a specified
threshold), and (P3) minimizing the queueing cost while requiring fairness on
idleness (e.g., the average idleness of the two server pools satisfies a fixed
ratio condition).
The running cost can be any nontrivial nonnegative convex functions
having at most polynomial growth.
Under its usual parameterization, the control specifies
the number of customers from each class that are scheduled to each server
pool, and we refer to it as a ``scheduling'' policy.
However, the control can be also parameterized in a way so as to
specify which class of customers
should be scheduled to server pool~$2$ if it has any available servers
(``scheduling'' control), and which of the server pools
should class-$1$ customers be routed to, if both pools have available servers
 (``routing" control).
The optimal control problems for the limiting diffusion corresponding to
(P1)--(P3) are well-posed and in the case of (P1)--(P2) the
solutions can be fully characterized via HJB equations, following the methods
in \cite{AP15,ABP14}.
The dynamic programming characterization for (P3) is more difficult.
This is one of those rare examples in ergodic control where the running cost
is not bounded below or above, and there is no blanket stability property.
In this paper, we establish the existence of a solution to the HJB equation,
 and the usual characterization
of optimality for this problem. 

We first present a Markov scheduling policy, for the N-network under which the
diffusion-scaled state processes are geometrically ergodic in the Halfin--Whitt
regime (see Section~\ref{S3.2}). 
Unlike the V-model, this scheduling policy is a state-dependent priority 
(SDP) policy,
i.e., priorities change as the system state varies---yet it is simple to describe.
This result is significant 
since it indicates that the ergodic control problems for the diffusion-scaled
processes in the Halfin--Whitt regime have finite values.  
Moreover, it can be used as a scheduling policy outside a
bounded subset of the state space where the JWC property might fail to hold.
On the other hand, it follows from the theory in \citet{AP15} that the
controlled diffusion limit
is geometrically ergodic under some constant Markov control (see Theorem 4.2 in \cite{AP15}).
In this paper we show that a much stronger result applies for the N-network
(Lemma \ref{L4.1}):
as long as the scheduling control is a constant Markov control with pool~$2$
prioritizing class $2$ over $1$,  the controlled diffusion limit
is geometrically ergodic, uniformly over all routing controls
(e.g., class-$1$ customers prioritizing server pool $1$ over $2$,
or a state-dependent priority policy,
or even a non-stationary one). 

The main results of the paper center around the proof of
convergence of the value functions, which is accomplished by establishing
matching lower and upper bounds (see Theorems~\ref{T-lbound}--\ref{T-ubound}). 
To prove the lower bound, the key is to show that as long as the long-run
average first-order moment of the diffusion-scaled state process is finite, 
the associated mean empirical measures are tight
and converge to an ergodic occupation measure corresponding to a stationary
stable Markov control for the limiting diffusion (Lemma \ref{L7.1}).
In fact, we can show that for the N-network, under any  admissible
(work conserving) scheduling policy, the long-run average $m^{\rm th}$
($m\ge 1$) moment of the diffusion-scaled state process is bounded by the
 long-run average $m^{\rm th}$ moment
of the diffusion-scaled queue 
under that policy (Lemma \ref{L8.1}).
The lower bounds can then be deduced from  these observations.
It is worth noting that in order to establish
asymptotic optimality for the fairness problem (P3),
we must relax the equality in the constraint and show instead that the
constraint is asymptotically feasible.

In order to establish the upper bound, 
a Markov scheduling policy is synthesized which is the
concatenation of a Markov policy induced by the solution of
the ergodic control problem for the diffusion limit,
and which is applied on a bounded subset of the
state space where the JWC condition holds, and the SDP policy,
which is applied on the complement of this set.

The proof involves the following key components. 
First, we apply the spatial truncation approximation
technique developed in \citet{ABP14} and \citet{AP15} for the
ergodic control problem for the diffusion limit.
This provides us with an
$\epsilon$-optimal continuous precise control.
Second, we show that under the concatenation of the Markov scheduling policy
induced by
this $\epsilon$-optimal control and the SDP policy,
the diffusion-scaled state processes
are geometrically ergodic (Lemma \ref{L9.1}).
Then we prove that the mean empirical measures of the diffusion-scaled process
and control, converge to the ergodic occupation measure of the
diffusion limit associated with the $\epsilon$-optimal precise control
originally selected (Lemma \ref{L7.2}).
Uniform integrability implied by the geometric ergodicity takes care
of the rest.
 
\subsection{Literature review}
In a certain way, the N-network has been viewed as the benchmark of
multiclass multi-pool networks, mainly because it is simple to describe,
yet it has complicated enough dynamics.
There are several important studies on stochastic control of parallel server
networks, focusing on N-networks. 
\citet{XRS92} studied the Markovian single-server
N-network and showed that a threshold scheduling policy is optimal
under the expected discounted and long-run average linear holding cost,
utilizing a Markov decision process approach. 
In the conventional (single-server) heavy-traffic regime, the N-network
with two single severs,  was first studied in \citet{harrison-1998},
under the assumption of Poisson arrivals and deterministic services,
and a ``discrete-review" policy is shown to be
asymptotically optimal 
under an infinite horizon discounted linear queueing cost.
The N-model with renewal arrival processes and general service time distributions
was then studied in \citet{bell-williams-2001}, as a Brownian
control problem under an infinite horizon discounted linear queueing cost, and a
threshold policy is shown to be asymptotically optimal. 
\citet{GW13} studied the N-network with renewal arrival processes, general service time distributions and exponential patience times, and showed a two-threshold scheduling policy is asymptotically optimal via a Brownian control problem under an infinite horizon discounted linear queueing cost. 
Brownian control models for multiclass networks were pioneered in \citet{harrison-1988,harrison-2000}
and have been extended to many interesting networks; see \citet{williams-2016}
for an extensive review of that literature.

In the many-server Halfin--Whitt regime,  \citet{Atar-05a, Atar-05b} pioneered
the study of multiclass multi-pool networks with abandonment
(of a certain tree topology) via the corresponding control problems
for the diffusion limit under an
infinite-horizon discounted cost.
\citet{GW09, GW10} have studied queue-and-idleness-ratio controls for
multiclass multi-pool networks (including the N-network)
in the Halfin--Whitt regime by establishing a
State-Space-Collapse property, under certain assumptions on the network structure
and the system parameters. 
The N-network with many-server pools and abandonment has been recently studied
in \citet{TD-10}, where a static priority policy is shown to be asymptotically
optimal in the Halfin--Whitt regime under a finite-time horizon cost criterion.
In \citet{WA-13}, some blind fair routing policies are proposed for some
multiclass multi-pool networks (including the N-network), where the
control problems are formulated to minimize the average queueing cost under a
fairness constraint on the idleness.

On the other hand, most of the existing results on the stochastic control of
multiclass multi-pool networks in the Halfin--Whitt regime have only considered
either discounted cost criteria (\citet{Atar-05a, Atar-05b, Atar-09}) or
finite-time horizon cost criteria (\citet{dai-tezcan-08, dai-tezcan-11}).
There is only limited work of multiclass networks under
ergodic cost criteria.  
\citet{ABP14} have recently studied the multiclass V-model under ergodic cost
in the Halfin--Whitt regime. 
The inverted V-model is studied in  \citet{Armony-05}, and it  is shown that the
fastest-server-first policy is asymptotically optimal for minimizing the
steady-state expected queue length and waiting time.
For the same model, 
\citet{AW-10} showed that 
a threshold policy is asymptotically optimal for minimizing the steady-state
expected queue length and waiting time subject to a ``fairness" constraint
on the workload division. 
 \citet{Biswas-15} has recently studied a multiclass multi-pool network with
``help" under an ergodic cost criterion,
where each server pool has a dedicated stream
of a customer class,
and can help with other customer classes only when it has idle servers.
The N-network does not belong to the class of models considered
in \citet{Biswas-15}. 
For general multiclass multi-pool networks, \citet{AP15} have thoroughly
studied  ergodic control problems for the limiting diffusion.  
However, as mentioned earlier, asymptotic optimality has remained open.
This work makes a significant contribution in that direction,
by studying the N-network.
The fairness problem we study fills,
in some sense (our formulation is more general), the asymptotic optimality gap in
\citet{WA-13},
where  the associated approximate diffusion control problems are
studied via simulations.  

We also feel that this work contributes to the understanding of the
stability of multiclass multi-pool
networks in the Halfin--Whitt regime. 
In this topic, in addition to the stability studies of the V and N-networks in
\citet{gamarnik-stolyar} and \citet{stolyar-15},
it is worthwhile mentioning the following relevant work. 
\citet{stolyar-yudovina-12b} studied the stability of
multiclass multi-pool networks under a load balancing scheduling and routing
policy, ``longest-queue freest-server" (LQFS-LB).
They showed that the
fluid limit may be unstable in the vicinity of the equilibrium point for
certain network structures and system parameters, and that the sequence of
stationary
distributions of the diffusion-scaled processes may not be tight in both the
underloaded regime and the Halfin--Whitt regime.
They also provided positive answers to the stability and exchange-of-limit
results in the diffusion scale for one special class of networks. 
\citet{stolyar-yudovina-12a} proved the tightness of
the sequence of stationary distributions of multiclass multi-pool networks
under a \emph{leaf activity priority} policy (assigning static priorities
to the activities in the order of sequential ``elimination" of the tree leaves)
in the scale $n^{\nicefrac{1}{2}+\varepsilon}$ ($n$ is the scaling parameter) for
all $\varepsilon>0$, which was extended to the diffusion scale
$n^{\nicefrac{1}{2}}$ in \citet{stolyar-15}. 
The stability/recurrence properties for general multiclass multi-pool networks
under other scheduling policies remain open. 

As alluded above, the main challenge to establish asymptotic
optimality for general multiclass multi-pool networks is to understand
the stochastic stability/recurrence properties of the diffusion-scaled
state processes in the Halfin-Whitt regime.
Despite the recent development in
\cite{stolyar-yudovina-12a, stolyar-yudovina-12b, stolyar-15}, these are far
from being adequate for proving the asymptotic optimality for general multiclass
multi-pool networks. The stochastic stability/recurrence properties may depend
critically upon the network topology and/or parameter assumptions.
We believe that the methodology developed here for the N-network will provide
some important
insights on what stochastic stability properties are required and the roles
they may play in proving  asymptotic optimality.

\subsection{Organization of the paper}
The notation used in this paper is summarized in Section~\ref{S1.3}.
A detailed description of the N-network model is given in Section~\ref{sec-model}.
We define the control objectives in Section~\ref{S3.1} and present a
state-dependent priority policy that is geometrically stable in Section~\ref{S3.2}. 
We state the corresponding ergodic control problems for the limiting diffusion,
as well as the results on the
characterization of optimality in Section~\ref{S4}. 
The asymptotic optimality results are stated in Section~\ref{S5}.
We describe the system dynamics and an equivalent
control parameterization in Section~\ref{S6}. 
In Section~\ref{S7}, we establish convergence results for the mean empirical
measures for the diffusion-scaled state processes.
We then prove the lower and upper bounds in
Sections~\ref{S8} and~\ref{S9}, respectively.
The proof of geometric stability of the
SDP policy is given in Appendix~\ref{App1},
and Appendix~\ref{App2} is concerned with the proof of
Theorem~\ref{T4.3}.

\subsection{Notation}\label{S1.3}

The following notation is used in this paper.
The symbol $\RR$, denotes the field of real numbers,
and $\RR_{+}$, $\NN$, and $\ZZ$ denote the sets of nonnegative
real numbers, natural numbers, and integers, respectively.
Given two real numbers $a$ and $b$, the minimum (maximum) is denoted by $a\wedge b$ 
($a\vee b$), respectively.
Define $a^{+}\df a\vee 0$ and $a^{-}\df-(a\wedge 0)$. 
The integer part of a real number $a$ is denoted by $\lfloor a\rfloor$.
We also let $e\df (1,1)\transp$.

For a set $A\subset\Rd$, we use
$\Bar A$, $A^{c}$, and $\Ind_{A}$ to denote the closure,
the complement, and the indicator function of $A$, respectively.
A ball of radius $r>0$ in $\Rd$ around a point $x$ is denoted by $B_{r}(x)$,
or simply as $B_{r}$ if $x=0$.
The Euclidean norm on $\Rd$ is denoted by $\abs{\,\cdot\,}$,
$x\cdot y$
denotes the inner product of $x,y\in\RR^{d}$,
and $\norm{x}\df \sum_{i=1}^{d}\abs{x_{i}}$.

For a nonnegative function $g\in\Cc(\RR^{d})$ 
we let $\order(g)$ denote the space of functions
$f\in\Cc(\RR^{d})$ satisfying
$~\sup_{x\in\RR^{d}}\;\frac{\abs{f(x)}}{1\vee g(x)}<\infty$.
We also let $\sorder(g)$ denote the subspace of $\order(g)$ consisting
of those functions $f$ satisfying
$~\limsup_{\abs{x}\to\infty}\;\frac{\abs{f(x)}}{1\vee g(x)}\;=\;0\,.$
Abusing the notation, $\order(x)$ and $\sorder(x)$ occasionally denote
generic members of these sets.

We let $\Cc^{\infty}_{c}(\RR^{d})$ denote the set of smooth real-valued functions
on $\RR^d$ with compact support.
Given any Polish space $\cX$, we denote by $\cP(\cX)$ the set of
probability measures on $\cX$ and we endow $\cP(\cX)$ with the
Prokhorov metric.
For $\nu\in\cP(\cX)$ and a Borel measurable map $f\colon\cX\to\RR$,
we often use the abbreviated notation
$\nu(f)\df \int_{\cX} f\,\D{\nu}\,.$
The quadratic variation of a square integrable martingale is
denoted by $\langle\,\cdot\,,\cdot\,\rangle$.
For any path $X(\cdot)$ of a c\`adl\`ag process,
we use the notation $\Delta X(t)$ to denote the jump at time $t$.

%%%%%%%%%%%%%%%%%%%%%%%%%%%%%%%%%%%%%%%%%%%%%%%%%%%%%%%%%%%%%%%%%%%%%
\section{Model Description} \label{sec-model}

All stochastic variables introduced below are defined on a complete
probability space 
$(\Omega,\mathfrak{F},\Prob)$.  The expectation w.r.t. $\Prob$ is denoted by
$\Exp$. 

\subsection{The N-network model}

Consider an N-network with two classes of jobs (or customers) and two server pools,
as depicted in Figure~\ref{N-model-plot}. 
Jobs of each class arrive according to a Poisson process with rates
$\lambda_i^n$, $i=1,2$.
There are two server pools, each of which have multiple statistically identical
servers, and servers in pool~$1$ can only serve class-$1$ jobs, while servers
in pool~$2$ can serve both classes of jobs. Let $N_j^n$ be the number of
servers in pool~$j$, $j=1,2$. 
The service times of all jobs are exponentially distributed, where jobs of
class~$1$ are served at rates $\mu_{11}^n$ and $\mu_{12}^n$ by servers in
pools~$1$ and $2$, respectively, while jobs of class~$2$ are served at a rate
$\mu_{22}^n$ by servers in pool~$2$.
Throughout the paper we set $\mu_{21}^n\equiv 0$, and $\mu_{21} \equiv 0$.
Jobs may abandon while waiting in queue, with an exponential patience time with rate
$\gamma_i^n$ for $i=1,2$. 
We study a sequence of such networks indexed by an integer
$n$ which is the order of the number
of servers and let $n \to\infty$.

Throughout the paper we assume that the parameters satisfy the following conditions.

\begin{assumption} $($Halfin--Whitt Regime$)$ \label{as-para}
As $n\to\infty$, the following hold:
\begin{gather*}
\frac{\lambda^{n}_{i}}{n} \;\to\;\lambda_{i}\;>\;0\,,\qquad \frac{\lambda^{n}_{i}
- n \lambda_{i}}{\sqrt n} \;\to\;\Hat{\lambda}_{i}\,, \qquad 
\gamma_{i}^{n} \;\to\; \gamma_{i}\;\ge \;0\,, \qquad i=1,2\,,\\
\frac{N^{n}_{j}}{n} \;\to\;\nu_{j}\;>\;0\,,\qquad
 {\sqrt n}\,(n^{-1} N^{n}_{j} -  \nu_{j}) \;\to\; 0 \,,   \qquad j=1,2\,,\\
\mu_{ij}^{n} \;\to\;\mu_{ij}\;>\;0\,, \qquad
{\sqrt n}\,(\mu^{n}_{ij} - \mu_{ij}) \;\to\;\Hat{\mu}_{ij}\,, \qquad i,j =1,2\,.
\end{gather*}
We also have
\begin{equation}\label{HWpara4}
\lambda_1> \mu_{11} \nu_1\,, \qquad \frac{\lambda_1- \mu_{11}\nu_1}{\mu_{12} \nu_2}
+ \frac{\lambda_2}{\mu_{22} \nu_2} \;=\;1\,. 
\end{equation}
\end{assumption}

Note that \eqref{HWpara4} implies that class-$1$ jobs are
overloaded for server pool~$1$, class-$2$ jobs are underloaded for server
pool~$2$, and the overload
of class-$1$ jobs can be served by server pool~$2$ so that both server pools are
critically loaded. This assumption is referred to as the
\emph{complete resource pooling} condition (\citet{williams-2000,Atar-05b}). 

Let $\xi^*$ be a constant matrix
\begin{equation} \label{Exi*}
\xi^*\;\df\; \begin{bmatrix} 1 \quad&  \frac{\lambda_1
- \mu_{11}\nu_1}{\mu_{12} \nu_2} \\[5pt]
0 \quad & \frac{\lambda_2}{\mu_{22} \nu_2}  \end{bmatrix} \,.  
\end{equation}
The quantity $\xi_{ij}^*$  can be interpreted as the steady-state fraction of
service allocation of pool~$j$ to class-$i$ jobs in the fluid scale.
Define
 $x^*=(x^*_{i})_{i =1,2}$ and 
$z^* = (z^*_{ij})_{ i, j = 1,2}$ by
\begin{equation}\label{Ex*}
x^*_1 \;\df\; \xi^*_{11} \nu_1 + \xi^*_{12} \nu_2\,,
\qquad x^*_2 \;\df\; \xi^*_{22} \nu_2\,,  
\end{equation}
\begin{equation} \label{Ez*}
z^* \;=\; (z^*_{ij}) \;\df\; ( \xi^*_{ij} \nu_j) \;=\;
\begin{bmatrix} \nu_1 \quad&  \frac{\lambda_1- \mu_{11}\nu_1}{\mu_{12} } \\[5pt]
0 \quad & \frac{\lambda_2}{\mu_{22} }  \end{bmatrix}   \,.  
\end{equation}
Then $x_i^*$ can be interpreted as the steady-state total number of class-$i$ jobs,
and $z_{ij}^*$ can be interpreted as the steady-state number of class-$i$ jobs
receiving service in pool~$j$, in the fluid scale. 
It is easy to check that
$e\cdot x^* \;=\; e \cdot \nu$,
where $\nu \df (\nu_1, \nu_2)\transp$.

For each $i=1,2,$ let
$X^{n}_i = \{X^{n}_i(t): t\ge 0\}$ and $Q^{n}_i = \{Q^{n}_i(t): t\ge 0\}$
be the total number of class-$i$ jobs
in the system and in the queue, respectively. 
For each $j=1,2$, let $Y^{n}_j = \{Y^{n}_j(t): t\ge 0\}$ be the number of
idle servers in server pool~$j$.
For $i,j=1,2$, let $Z_{ij}^{n} = \{Z_{ij}^{n}(t): t\ge 0\}$ be the
number of class-$i$ jobs being served in server pool~$j$,
and note that $Z_{21}^n\equiv 0$. 
The following fundamental balance equations hold:
\begin{equation}\label{baleq}
\begin{split}
&\begin{aligned}
X^{n}_1(t)&\;=\;Q_1^{n}(t) +  Z_{11}^{n}(t) + Z_{12}^{n}(t)\,, \\[5pt]
X^{n}_2(t)&\;=\;Q_2^{n}(t) + Z_{22}^{n}(t)\,,
\end{aligned}
\qquad
\begin{aligned}
N_1^{n}&\;=\;Y_1^{n}(t) + Z_{11}^{n}(t)\,,  \\[5pt]
N_2^{n}&\;=\;Y_2^{n}(t) + Z_{12}^{n}(t) +  Z_{22}^{n}(t)\,,
\end{aligned}\\[5pt]
&\mspace{0mu}
X^{n}_i(t) \,\ge\, 0\,, \quad Q_i^{n}(t) \,\ge\, 0\,, \quad Y_j^{n}(t) \,\ge\, 0\,,
\quad Z_{ij}^{n}(t) \,\ge\, 0\,,\quad i,j =1,2\,,
\end{split}
\end{equation}
for each $t\ge0$.
We let $Z^{n} = (Z_{ij}^{n})_{i, j=1,2}$, $X^{n} = (X_i^{n})_{i=1,2}$,
and analogously define $Q^{n}$ and $Y^{n}$.

\subsection{Scheduling control}

We  only consider work conserving policies that are non-anticipative and
preemptive. 
Work conservation requires that the processes $Q^{n}$ and $Y^{n}$ satisfy
\begin{equation*}
Q_1^{n}(t) \wedge Y^{n}_j(t)\;=\;0\, \qquad \forall j = 1,2,
\qandq Q_2^{n}(t) \wedge Y^{n}_2(t)\;=\;0\,,  \quad\forall\, t \ge 0\,. 
\end{equation*}
In other words, no server will idle if there is any job in a queue
that the server can serve. 
Service preemption is allowed, that is, jobs in service at pool~$2$ can be
interrupted and resumed at a later time in order to serve jobs
from the other class.

Let
\begin{equation*}
\begin{aligned}
q_1(x,z) &\;\df\; x_1 - z_{11} - z_{12}\,,\qquad &
y_1^n(x,z) &\;\df\;   N_1^n - z_{11}\,,\\[5pt]
q_2(x,z) &\;\df\; x_2 - z_{22}\,,\qquad &
y_2^n(x,z) &\;\df\; N_2^n  - z_{12} - z_{22}\,.
\end{aligned}
\end{equation*}
We define the action set $\cZn(x)$ as
\begin{multline*}
\cZn(x)\;\df\; \bigl\{z \in \ZZ^{2 \times 2}_+\;\colon\, z_{21}=0\,,~
q_1(x,z)\wedge q_2(x,z)\wedge y_1^n(x,z)\wedge y_2^n(x,z)\ge0\,,\\
q_1(x,z) \wedge \bigl(y_1^n(x,z) +y_2^n(x,z)\bigr)=0\,,
\quad q_2(x,z) \wedge y_2^n(x,z)=0 \bigr\}\,. 
\end{multline*}

Define the $\sigma$-fields 
\begin{align*}
\mathcal{F}^{n}_t &\;\df\; \sigma \bigl\{ X^{n}(0),\; \Tilde{A}^{n}_i(s),\;
\Tilde{S}^{n}_{ij}(s),\; \Tilde{R}^{n}_i(s)\;\colon\, i, j = 1,2, 
\; 0 \le s \le t \bigr\} \vee \mathcal{N} \,,\\[5pt]
\mathcal{G}^{n}_t &\;\df\; \sigma \bigl\{ \delta\Tilde{A}^{n}_i(t, r),\;
\delta \Tilde{S}^{n}_{ij}(t, r),\; \delta\Tilde{R}^{n}_i(t, r)\;\colon\,
i, j =1,2, \; r \ge 0 \bigr\} \,,
\end{align*}
where  $\mathcal{N}$ is the collection of all $\Prob$-null sets, and
\begin{align*}
\Tilde{A}^{n}_i(t) &\;\df\; A^{n}_i(\lambda_i^{n} t),
&\quad \delta\Tilde{A}^{n}_i(t,r) &\;\df\;
\Tilde{A}^{n}_i(t+r) - \Tilde{A}^{n}_i(t) \,,\\[5pt]
\Tilde{S}^{n}_{ij}(t) &\;\df\; S^{n}_{ij} \left(\mu_{ij}^{n}
\int_0^t Z_{ij}^{n}(s)\,\D{s} \right), 
&\quad \delta \Tilde{S}^{n}_{ij}(t, r) &\;\df\; S^{n}_{ij}
\left( \mu_{ij}^{n}\int_0^t Z_{ij}^{n}(s)\,\D{s} + \mu_{ij}^{n} r \right)
- \Tilde{S}^{n}_{ij}(t) \,,\\[5pt]
\Tilde{R}^{n}_i(t) &\;\df\; R_i^{n}
\left(\gamma_i^{n} \int_0^t Q^{n}_i(s)\,\D{s} \right)\,, 
&\quad \delta \Tilde{R}^{n}_i(t, r) &\;\df\; R_i^{n}
\left(\gamma_i^{n} \int_0^t Q^{n}_i(s)\, \D{s}
+ \gamma_i^{n} r \right) -  \Tilde{R}^{n}_i(t) \,.
\end{align*}
The processes $A^{n}_i$, $S^{n}_{ij}$ and $R^{n}_i$ are all rate-1 Poisson
processes,  representing the arrival, service and abandonment quantities, respectively.
We assume that they are mutually independent, and also independent of the initial
condition $X^{n}_i(0)$. 
Note that quantities with subscript $i=2$, $j=1$ are all equal to zero.  
The filtration $ \mathbf{F}^{n}\df\{\mathcal{F}^{n}_t: t \ge 0\}$ represents
the information available up to time $t$, and the filtration 
$\mathbf{G}^{n}\df\{\mathcal{G}^{n}_t: t \ge 0\}$ contains the information
about future increments of the processes. 
We say that  a scheduling policy $Z^n$ is \emph{admissible} if 
\begin{enumerate}
\item[(i)] $Z^{n}(t)\in \cZn(X^n(t))$ for all $t\ge0$;
\smallskip
\item[(ii)] $Z^{n}(t)$ is adapted to $\mathcal{F}^{n}_t$;
\smallskip
\item[(iii)] $\mathcal{F}^{n}_t$ is independent of $\mathcal{G}^{n}_t$ at each time
$t\ge 0$;
\smallskip
\item[(iv)] for each $i,j\in\{1,2\}$, and for each
$t\ge 0$, the process  $\delta \Tilde{S}^{n}_{ij}(t, \cdot)$ agrees in law
with $S^{n}_{ij}(\mu_{ij}^{n}\,\cdot)$, and the process
$\delta \Tilde{R}^{n}_i(t, \cdot)$
agrees in law with 
$R^{n}_i (\gamma_i^{n} \cdot)$.  
\end{enumerate}
We denote the set of all admissible scheduling policies
$(Z^{n}, \mathbf{F}^{n}, \mathbf{G}^{n})$ by $\mathfrak{Z}^{n}$.
Abusing the notation we sometimes denote this as $Z^{n}\in\mathfrak{Z}^{n}$.

Following \citet{Atar-05b}, we also consider a stronger condition,
\emph{joint work conservation} (JWC), for preemptive scheduling policies. 
Namely, for each $x\in\ZZ^2_+$, there exists a rearrangement $z\in\cZn(x)$ of jobs in service
such that there is either no job in queue or no idling server in the system, satisfying
\begin{equation} \label{E-JWC-def}
e\cdot q(x,z) \wedge e \cdot y^n(x,z)
\;=\;0\,. 
\end{equation}
We let $\sX^{n}$ denote the set of all possible values of
$\ZZ^2_+$ for which the JWC condition \eqref{E-JWC-def} holds, i.e., 
\begin{equation*}
\sX^{n}\;\df\;\bigl\{x\in\ZZ^2_+\,\colon\, \text{\eqref{E-JWC-def} holds for some}~ z\in\cZn(x)\bigr\}\,. 
\end{equation*} 
Note that the set $\sX^{n}$ may not include all possible scenarios
of the system state $X^{n}(t)$ for finite $n$ at each time $t\ge 0$.

%Specifically, we let $\sX^{n}$ denote the set of all possible values of
%$\ZZ^2_+$ for which there exists a rearrangement $z$ of jobs in service
%such that there is either no job in queue or no idling server in the system,
%i.e.,
%\begin{equation*}
%\sX^{n}\;\df\;\bigl\{x\in\ZZ^2_+\,\colon\, ~e\cdot q(x,z) \wedge e \cdot y^n(x,z)
%\;=\;0 ~\text{for some}~ z\in\cZn(x)\bigr\}\,. 
%\end{equation*} 

We quote a result from \citet{Atar-05b}, which is used later.

\begin{lemma}[Lemma~3 in \citet{Atar-05b}]\label{L-JWC}
There exists a constant $c_{0}>0$ such that, the collection
of sets $\Breve{\sX}^{n}$ defined by
\begin{equation*}
\Breve{\sX}^{n}\;\df\;
\bigl\{x \in \ZZ^{2}_{+}\;\colon\,  \norm{x - n x^*}\le c_{0}\, n  \bigr\}\,,
\end{equation*} 
satisfies $\Breve{\sX}^{n}\subset\sX^n$ for all $n\in\NN$.
Moreover, for any $x,\, q,\, y \in\ZZ^2_+$ satisfying $e\cdot q\wedge e\cdot y=0$
and $e\cdot (x-q)= e\cdot (N^n-y)\ge0$, we have
\begin{equation}\label{L2.1B}
\begin{bmatrix}
N_1^n - y_1 \quad & x_1 - q_1-  (N_1^n - y_1) \\[3pt]
0 \quad & x_2 - q_2
\end{bmatrix} \;\in\; \cZn(x)\,.
\end{equation}
\end{lemma}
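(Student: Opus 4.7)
The plan is to prove the two assertions in reverse: first the explicit formula, which is direct verification, and then use it to establish the JWC inclusion.

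For the second assertion, I would simply substitute the proposed $z$ into the definitions of $q_j(x,z)$ and $y_j^n(x,z)$. Straightforward algebra gives $q_1(x,z)=q_1$, $y_1^n(x,z)=y_1$, $q_2(x,z)=q_2$, and, after invoking the balance $e\cdot(x-q)=e\cdot(N^n-y)$,
\[
y_2^n(x,z) \;=\; N_2^n - z_{12} - z_{22} \;=\; y_2.
\]
Nonnegativity of the four entries of $z$ is equivalent to $y_1\le N_1^n$, $q_2\le x_2$, and $x_1-q_1\ge N_1^n-y_1$; all three follow from the natural componentwise bounds $q\le x$, $y\le N^n$ implicit in having $x,q,y\in\ZZ_+^2$ together with the scalar balance. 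The two work-conservation conditions $q_1(x,z)\wedge(y_1^n(x,z)+y_2^n(x,z))=0$ and $q_2(x,z)\wedge y_2^n(x,z)=0$ are then immediate from the hypothesis $e\cdot q\wedge e\cdot y=0$: if $e\cdot q=0$ then $q=0$, and if $e\cdot y=0$ then $y=0$, so in each case one factor in each wedge vanishes.

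For the first assertion, given $x\in\Breve{\sX}^n$ I would construct $q,y\in\ZZ_+^2$ meeting the hypothesis of the second assertion and invoke it to exhibit a JWC allocation. The construction splits into two cases. If $e\cdot x\le e\cdot N^n$, take $q=0$, $y_1=0$, and $y_2=e\cdot N^n-e\cdot x$. If $e\cdot x>e\cdot N^n$, take $y=0$, $q_2=0$, and $q_1=e\cdot x-e\cdot N^n$. In both cases the scalar balance and $e\cdot q\wedge e\cdot y=0$ hold by construction; what must be checked for membership in $\cZn(x)$ is $y_2\le N_2^n$ in the first case (which reduces to $x_1\ge N_1^n$) and $q_1\le x_1-N_1^n$ in the second case (which reduces to $x_2\le N_2^n$).

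To close the argument I would exploit Assumption~\ref{as-para}. Since $\lambda_1>\mu_{11}\nu_1$ and the complete resource pooling identity in \eqref{HWpara4} give
\[
x_1^* \;=\; \nu_1 + \frac{\lambda_1-\mu_{11}\nu_1}{\mu_{12}} \;>\;\nu_1
\qandq
x_2^* \;=\; \frac{\lambda_2}{\mu_{22}} \;=\; \nu_2 - \frac{\lambda_1-\mu_{11}\nu_1}{\mu_{12}} \;<\;\nu_2,
\]
I can fix $c_0<\tfrac12\min\{x_1^*-\nu_1,\;\nu_2-x_2^*\}$. Using $N_j^n/n\to\nu_j$ with $\sqrt{n}$-order correction, for every $x\in\Breve{\sX}^n$ and all $n$ large one obtains $x_1-N_1^n\ge\tfrac12(x_1^*-\nu_1)n-o(n)>0$ and $N_2^n-x_2\ge\tfrac12(\nu_2-x_2^*)n-o(n)>0$. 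The remaining finitely many values of $n$ are handled by further shrinking $c_0$ (for small $n$ the set $\Breve{\sX}^n$ contains at most finitely many integer points, so the condition is a finite check).

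The main obstacle is the bookkeeping needed to make $c_0$ genuinely uniform in $n$: one has to confirm that the $o(\sqrt{n})$ slack $N_j^n-n\nu_j$ is dominated by the $O(n)$ gap supplied by the strict inequalities $x_1^*>\nu_1$, $x_2^*<\nu_2$. Once this is absorbed into the choice of $c_0$, both conceptual steps---the algebraic verification of part two and the case split of part one---are routine.
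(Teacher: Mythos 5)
The paper does not prove this lemma---it quotes it directly from Atar (2005b), so there is no internal argument against which to compare; what follows assesses your proposal on its own terms.

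Your verification that $q(x,z)=q$ and $y^n(x,z)=y$ for the prescribed $z$ is correct (the computation of $y_2^n(x,z)=y_2$ uses the balance $e\cdot(x-q)=e\cdot(N^n-y)$ exactly as you say), and so is the observation that both work-conservation wedges vanish once $e\cdot q\wedge e\cdot y=0$ forces $q=0$ or $y=0$. The gap is in the nonnegativity of the entries of $z$. You claim that $q\le x$ and $y\le N^n$ componentwise are ``implicit'' in $x,q,y\in\ZZ_+^2$ together with the scalar balance; they are not, and even granting those bounds the remaining requirement $x_1-q_1\ge N_1^n-y_1$ for $z_{12}\ge0$ still does not follow. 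A concrete counterexample to the conclusion under only the hypotheses as stated: take $N^n=(2,2)$, $x=(0,4)$, $q=(0,0)$, $y=(0,0)$. Then $e\cdot q\wedge e\cdot y=0$ and $e\cdot(x-q)=4=e\cdot(N^n-y)\ge0$, yet $z_{12}=x_1-q_1-(N_1^n-y_1)=-2<0$, so the prescribed matrix lies outside $\ZZ^{2\times2}_+$ and hence outside $\cZn(x)$. As written, the second assertion needs the supplementary hypotheses $y_1\le N_1^n$, $q_2\le x_2$ and $x_1-q_1\ge N_1^n-y_1$ (or equivalently that the prescribed $z$ be entrywise nonnegative). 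These hold by construction wherever the lemma is applied, e.g.\ in Corollary~\ref{C7.1}, but they are not consequences of the stated hypotheses and cannot be waved through.

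Your treatment of the first assertion is the right argument and, importantly, does not actually depend on the faulty step above: you make explicit choices of $(q,y)$ for which $z\ge0$ and the $\cZn(x)$ conditions can be verified directly. The case split on $e\cdot x$ versus $e\cdot N^n$, the strict inequalities $x_1^*>\nu_1$ and $x_2^*<\nu_2$ derived from \eqref{HWpara4}, and the choice of $c_0$ small enough that the linear-in-$n$ gap dominates the $\sorder(\sqrt n)$ slack in $N_j^n-n\nu_j$ are all correct, and the remark about handling finitely many small $n$ by shrinking $c_0$ is fine since the subset claim is vacuous for an empty $\Breve{\sX}^n$. One small slip in phrasing: in Case~1 the condition that must actually be checked is $z_{12}=x_1-N_1^n\ge0$; the inequality $y_2\le N_2^n$ you mention is equivalent only to $e\cdot x\ge N_1^n$, which is strictly weaker, so ``$y_2\le N_2^n$ reduces to $x_1\ge N_1^n$'' is not accurate even though the target inequality you then verify via the ball condition is the correct one.
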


We need the following definition.

\begin{definition}\label{D2.1}
We fix some open ball $\Breve{B}$ centered at the origin,
such that $n (\Breve{B}+x^*)\subset \Breve{\sX}^n$ for all $n\in\NN$.
The \emph{jointly work conserving
action set $\bcZn(x)$ at $x$} is defined as the subset of $\cZn(x)$,
which satisfies
\begin{equation*}
\bcZn(x) \;\df\; \begin{cases}
\bigl\{ z\in\cZn(x)\,\colon\,~
e\cdot q(x,z) \wedge e \cdot y^n(x,z)\;=\;0\bigr\}
&\text{if\ \ } x\in n (\Breve{B}+x^*)\,,\\[3pt]
\cZn(x)&\text{otherwise.}
\end{cases}
\end{equation*}
We also define the associated admissible policies by
\begin{equation*}
\begin{split}
\Breve\fZ^n &\;\df\;
\bigl\{Z^{n}\in\fZ^n \,\colon\, Z^n(t)\in\bcZn\bigl(X^n(t)\bigr)
\,,\ \ \forall\, t\ge0\bigr\}\,,\\[5pt]
\boldsymbol\fZ &\;\df\;\{Z^{n}\in\Breve\fZ^n\,\colon\,n\in\NN\}\,.
\end{split}
\end{equation*}
We refer to the policies in $\boldsymbol\fZ$ as
\emph{eventually jointly work conserving} (EJWC).
\end{definition}

\begin{remark}
The ball $\Breve{B}$ is fixed in Definition~\ref{D2.1} only for convenience.
We could instead adopt a more general definition of $\boldsymbol\fZ$, without
affecting the results of the paper.
Let $\{D_n\,,\, n\in\NN\}$ be a collection of domains which
covers $\RR^2$ and satisfies
$D_n\subset D_{n+1}$,
and $\sqrt{n} D_n+ n x^*\subset\Breve{\sX}^n$ for all $n\in\NN$.
Then we redefine $\bcZn$ using Definition~\ref{D2.1} and replacing
$n (\Breve{B}+x^*)$ with $\sqrt{n} D_n+ n x^*$ and define $\boldsymbol\fZ$ analogously.
If $\{Z^n\}\subset\boldsymbol\fZ$, then, in the diffusion scale, JWC holds
on an expanding sequence of domains which cover $\RR^2$.
This is the reason behind the terminology EJWC. 
The EJWC condition plays a crucial role in the derivation of the
controlled diffusion limit.
Therefore, convergence of mean empirical measures of the diffusion-scaled state
process and control, and thus, also the lower and upper bounds for
asymptotic optimality are established for sequences
$\{Z^n,\,n\in\NN\}\subset\boldsymbol\fZ$. 
\end{remark}

%%%%%%%%%%%%%%%%%%%%%%%%%%%%%%%%%%%%%%%%%%%%%%%%%%%%%%%%%%%%%%%%%%%%%
\section{Ergodic Control Problems} %\label{S3}

We define the diffusion-scaled processes
$\Hat{Z}^{n}\,=\,(\Hat{Z}^{n}_{ij})_{i,j \in \{1,2\} }\,$,
$\Hat{X}^{n}\,=\,(\Hat{X}^{n}_1, \Hat{X}^{n}_2)\transp\,$,
and analogously for  $\Hat{Q}^{n}$ and $\Hat{Y}^{n}$, by
\begin{equation} \label{DiffDef}
\begin{aligned}
\Hat{X}^{n}_i(t) &\;\df\; \frac{1}{\sqrt{n}} (X_i^{n}(t) - n x^*_{i}) \,,  \\[5pt]
\Hat{Q}^{n}_i(t) &\;\df\; \frac{1}{\sqrt{n}} Q_i^{n}(t) \,,
\end{aligned}
\qquad
\begin{aligned}
\Hat{Z}^{n}_{ij}(t) &\;\df\; \frac{1}{\sqrt{n}} (Z_{ij}^{n}(t) - n z^*_{ij})\,,\\[5pt]
\Hat{Y}^{n}_j(t) &\;\df\; \frac{1}{\sqrt{n}} Y_j^{n}(t) \,,
\end{aligned}
\end{equation}
where $x^*$ and $z^*$ are defined in \eqref{Ex*}--\eqref{Ez*}.

\subsection{Control objectives} \label{S3.1}

We consider three control objectives, which address the queueing (delay)
and/or idleness costs in the system:  (i) \emph{unconstrained problem},
minimizing the queueing (and idleness) cost and
(ii) \emph{constrained problem}, minimizing the queueing cost while
imposing a constraint on idleness, and
(iii) \emph{fairness problem},  minimizing the queueing cost while imposing a constraint
on the idleness ratio between the two server pools.
The running cost is a function of the diffusion-scaled processes,
which are related to the unscaled ones by \eqref{DiffDef}.
For simplicity, in all three cost minimization problems, 
we assume that the initial condition $X^{n}(0)$ is deterministic
and $\Hat{X}^{n}(0) \to x \in \RR^2$ as $n \to \infty$.  
Let $\Hat{r}: \RR^2_{+}\times \RR^2_+ \to \RR_+$ be defined by
\begin{equation} \label{Erhat}
\Hat{r}(q,y)\;\df\;\sum_{i=1}^2 \xi_i q_i^m + \sum_{j=1}^2 \zeta_j y_j^m\,,
\quad q \in \RR^{2}_{+}\,, \;y \in \RR^2_+\,, \quad\text{for some~} m\ge 1\,, 
\end{equation}
where 
$\xi=(\xi_1,\xi_2)\transp$ is a strictly positive vector and
$\zeta=(\zeta_1,\zeta_2)\transp$
is a nonnegative vector. In the case $\zeta\equiv 0$,
only the queueing cost is minimized.
In (P1) below, idleness may be added as a penalty in the objective.
We denote by $\Exp^{Z^n}$ the expectation operator under
an admissible policy $Z^n$.

\begin{itemize}
\item[\textbf{(P1)}] (\emph{unconstrained problem})
The running cost penalizes the queueing (and idleness).
Let $\Hat{r}(q,y)$ be the running cost function as defined in \eqref{Erhat}. 
Given an initial state $X^{n}(0)$, and an admissible scheduling policy
$Z^{n} \in \mathfrak{Z}^{n}$, we define the diffusion-scaled cost criterion by
\begin{equation} \label{cost-ds}
J\bigl(\Hat{X}^{n}(0), Z^{n}\bigr) \;\df\;
\limsup_{T \to \infty}\;\frac{1}{T}\;
\Exp^{Z^n} \left[\int_{0}^{T}
\Hat{r}\bigl(\Hat{Q}^{n}(s),\Hat{Y}^{n}(s)\bigr)\,\D{s}\right]\,.
\end{equation} 
The associated cost minimization problem becomes 
\begin{equation*} 
\Hat{V}^{n}(\Hat{X}^{n}(0)) \;\df\;
\inf_{Z^{n} \in \mathfrak{Z}^{n} } J \bigl(\Hat{X}^{n}(0), Z^{n}\bigr)\,.  
\end{equation*} 

\item[\textbf{(P2)}] (\emph{constrained problem})
The objective here is to minimize the queueing cost while
imposing idleness constraints on the two server pools.
Let $\Hat{r}_{\mathsf{o}}(q)$ be the
running cost function corresponding to  $\Hat{r}$ in \eqref{Erhat} with
$\zeta \equiv 0$.
The diffusion-scaled cost criterion
$J_{\mathsf{o}}\bigl(\Hat{X}^{n}(0), Z^{n}\bigr)$ is defined
analogously to \eqref{cost-ds} with running cost
$\Hat{r}_{\mathsf{o}}(\Hat{Q}^{n}(s))$, that is, 
\begin{align*} 
J_{\mathsf{o}}\bigl(\Hat{X}^{n}(0), Z^{n}\bigr)
&\;\df\;\limsup_{T \to \infty}\;\frac{1}{T}\;
\Exp^{Z^n} \left[\int_{0}^{T}
\Hat{r}_{\mathsf{o}}\bigl(\Hat{Q}^{n}(s)\bigr)\,\D{s}\right]\,.\\
\intertext{Also define}
J_{\mathsf{c},j}\bigl(\Hat{X}^{n}(0), Z^{n}\bigr)
&\;\df\;\limsup_{T \to \infty}\;\frac{1}{T}\;
\Exp^{Z^n} \left[\int_{0}^{T}
\bigl(\Hat{Y}^{n}_j(s)\bigr)^{\Tilde{m}}\,\D{s}\right]\,,\qquad j=1,2\,,
\end{align*}
with $\Tilde{m}\ge1$.
The associated cost minimization problem becomes 
\begin{align} 
\Hat{V}^{n}_{\mathsf{c}}(\Hat{X}^{n}(0)) &\;\df\;
\inf_{Z^{n} \in \mathfrak{Z}^{n}} J_{\mathsf{o}}\bigl(\Hat{X}^{n}(0), Z^{n}\bigr)\,,
\nonumber\\[5pt]
\text{subject to}\quad
J_{\mathsf{c},j}\bigl(\Hat{X}^{n}(0), Z^{n}\bigr)
&\;\le\; \updelta_j \,, \quad j =1,2 \,,\label{constraint-ds}
\end{align}
where $\updelta = (\updelta_1, \updelta_2)\transp$ is a positive vector.

\item[\textbf{(P3)}] (\emph{fairness})
Here we minimize the queueing cost while keeping the average idleness
of the two server pools balanced.
Let $\uptheta$ be a positive constant and let $1\le\Tilde{m}< m$.
The associated cost minimization problem becomes 
\begin{align*} 
\Hat{V}^{n}_{\mathsf{f}}(\Hat{X}^{n}(0)) &\;\df\;
\inf_{Z^{n} \in \mathfrak{Z}^{n}} J_{\mathsf{o}}\bigl(\Hat{X}^{n}(0), Z^{n}\bigr)\,,
\nonumber\\[5pt]
\text{subject to}\quad
J_{\mathsf{c},1}\bigl(\Hat{X}^{n}(0), Z^{n}\bigr)
&\;=\;\uptheta
J_{\mathsf{c},2}\bigl(\Hat{X}^{n}(0), Z^{n}\bigr)\,. 
\end{align*}
\end{itemize}

We refer to $\Hat{V}^{n}(\Hat{X}^{n}(0))$,
$\Hat{V}^{n}_{\mathsf{c}}(\Hat{X}^{n}(0))$ and
$\Hat{V}^{n}_{\mathsf{f}}(\Hat{X}^{n}(0))$ as the diffusion-scaled optimal values
for the $n^{\rm th}$ system given the initial state $X^{n}(0)$,
for (P1), (P2) and (P3), respectively.

\begin{remark}
We choose running costs of the form \eqref{cost-ds}
mainly to simplify the exposition.
However, all the results of this paper still hold
for more general classes of functions.
Let $h_{\mathsf{o}}\colon\RR^2\to\RR_+$ be a convex function satisfying
$h_{\mathsf{o}}(x)\ge c_{1}\abs{x}^m + c_{2}$ for some $m\ge1$ and constants
$c_1>0$ and $c_2\in\RR$, and
$h\colon\RR^2\to\RR_+$, $h_i\colon\RR\to\RR_+$, $i=1,2$, be convex functions
that have at most polynomial growth.
Then we can choose
$\Hat{r}(q,y)=h_{\mathsf{o}}(q) + h(y)$ for the unconstrained problem,
and $h_i(y_i)$ as the functions in the constraints in \eqref{constraint-ds}
(with $\Hat{r}_{\mathsf{o}}=h_{\mathsf{o}}$).
For the problem (P3) we require in addition that
$h_1=h_2\ne0$ and they are in $\sorder(\abs{x}^m)$.
The analogous running costs can of course be used in the corresponding
control problems for the limiting diffusion, which are presented later
in Section~\ref{S4.2}.
\end{remark}

\subsection{A geometrically stable scheduling policy} \label{S3.2}
We introduce a Markov scheduling policy for the N-network that results in geometric
ergodicity for the diffusion-scaled state process, and also implies that
the diffusion-scaled cost in the ergodic control problem (P1) is bounded,
uniformly in $n\in\NN$.
Let $N^n_{12} \df \lfloor \xi^*_{12}N^n_2 \rfloor$ and
$N^n_{22} \df \lceil \xi^*_{22}N^n_2 \rceil$.
Note that $N^n_{12} + N^n_{22} = N^n_2$.

\begin{definition}\label{D3.1}
For each $n$, we define the scheduling policy
$\Check{z}^n=\Check{z}^n(x)$, $x\in \ZZ^2_+$, by
\begin{align*}
\Check{z}^n_{11}(x) &\;=\; x_1 \wedge N_1^n\,, \\[5pt]%\label{ge-Zn11}
\quad \Check{z}^n_{12}(x) &\;=\; \begin{cases}
(x_1 - N_1^n)^+ \wedge N_{12}^n &\text{if~} x_2\ge N_{22}^n\\[3pt]
(x_1 - N_1^n)^+ \wedge (N_2^n - x_2)&\text{otherwise,}
\end{cases} \\[5pt]%\label{ge-Zn12}
\quad \Check{z}^n_{22}(x) &\;=\; \begin{cases}
x_2 \wedge N_{22}^n &\text{if~} x_1\ge N_1^n+ N_{12}^n\\[3pt]
x_2\wedge \bigl(N_{2}^n - (x_1- N_{1}^n )^+\bigr)
&\text{otherwise.}
\end{cases} %\label{ge-Zn22}
\end{align*}

\end{definition}

Note that the scheduling policy $\Check{z}^n$ is state-dependent, and can be
interpreted as follows.
Class-$1$ jobs prioritize  server pool~$1$ over  $2$.
Server pool~$2$ prioritizes the two classes of jobs depending on the system state.
Whenever $x_1\ge N_1^n+N_{12}^n$, server pool~$2$ allocates no more than
$N_{22}^n$ servers to class-$2$ jobs, while whenever $x_2\ge N_{22}^n$,
it allocates no more than  $N_{12}^n$ servers to class-$1$ jobs.
It is easy to check that this policy  $\Check{z}^n$ is work conserving.
The resulting
queue length and idleness $\Check{q}^n$ and $\Check{y}^n$
can be obtained by the balance equations: for $x\in \ZZ^2_+$,
\begin{align*} 
\Check{q}^n_1(x) &= x_1 - \Check{z}^n_{11}(x) - \Check{z}^n_{12}(x)\,,
& \Check{q}^n_2(x) &= x_2 - \Check{z}^n_{22}(x)\,, 
\\[5pt]
\Check{y}^n_1(x) &= N^n_1 -  \Check{z}^n_{11}(x)\,, &
\Check{y}^n_2(x) &= N^n_2 -  \Check{z}^n_{12}(x) -   \Check{z}^n_{22}(x) \,. 
\end{align*}

\begin{definition}\label{D3.2}
For each $x\in\RR^{2}_{+}$, define 
\begin{equation}\label{Td-xn}
\begin{split}
\Tilde{x}^{n}(x)&\;\df\; \bigl( x_1 - n x^*_1\,, x_2 - n x^*_2 \bigr)\,,\\[5pt]
\Hat{x}^{n}(x)&\;\df\; \frac{\Tilde{x}^{n}(x)}{\sqrt n}\,.
\end{split}
\end{equation}
where $x^*$ is given in \eqref{Ex*}. 
Also define
\begin{equation*}
\sS^n \;\df\; \bigl\{\Hat{x}^{n}(x) \,\colon\,x\in\ZZ^2_{+}\bigr\}\,,
\qquad
\Breve\sS^n \;\df\; \bigl\{\Hat{x}^{n}(x) \,\colon\,x\in\Breve\sX^n\bigr\}\,.
\end{equation*}

For $k\ge2$ and $\beta>0$, we let
\begin{equation}\label{Lyapk}
\Lyap_{k,\beta}(x)\;\df\; \abs{x_1}^k + \beta\abs{x_2}^k\,,\quad x\in\RR^2\,.
\end{equation}
\end{definition}

The generator of the  state process $X^n$ under a scheduling
policy $z^n$ takes the form
\begin{multline} \label{E-gen} 
\cL_n^{z^n} f(x) \;\df\; \sum_{i=1}^2 \lambda^{n}_i \bigl(f(x+e_i) - f(x)\bigr)
+ ( \mu_{11}^{n} z^{n}_{11} + \mu_{12}^{n} z^{n}_{12}) \bigl(f(x-e_1)
- f(x)\bigr)  \\
+  \mu_{22}^{n} z^{n}_{22} \bigl(f(x-e_2) - f(x)\bigr)
+ \sum_{i=1}^2 \gamma_i^{n} q^{n}_i
\bigl(f(x-e_i)- f(x)\bigr)\,, \qquad   x \in \ZZ^{2}_{+}\,,
\end{multline}
for $f\in\Cc_b(\RR^2)$.
We can write the generator $\widehat\cL_n^{z^n}$ of the diffusion-scaled
state process $\Hat{X}^n$ using \eqref{E-gen} and the function
$\Hat{x}^n$ in Definition~\ref{D3.2} as 
\begin{equation} \label{E-gen2} 
\widehat\cL_n^{z^n} f(\Hat{x})
\;=\; \cL_n^{z^n} f\bigl(\Hat{x}^n(x)\bigr)\,.
\end{equation}

We have the following.

\begin{proposition}\label{P3.1}
Let $\Hat{X}^{n}$ denote the diffusion-scaled state process under the
scheduling policy $\Check{z}^{n}$
in Definition~\ref{D3.1}, and
$\widehat\cL_n^{\Check{z}^{n}}$ be its generator.  
For any $k\ge 2$, there exists $\beta_0>0$, such that
\begin{equation} \label{Lya-GE}
\widehat\cL_n^{\Check{z}^{n}} \Lyap_{k,\beta}(\Hat{x}) \;\le\; C_1  - C_2\, \Lyap_{k,\beta}(\Hat{x})
\qquad\forall\,\Hat{x}\in \sS^n \,,\quad\forall\,n\ge n_0\,,
\end{equation} 
for some positive constants $C_1$, $C_2$, and $n_0\in\NN$, which depend
on $\beta\ge\beta_0$ and $k$.  Namely, $\Hat{X}^n$ under the
scheduling policy $\Check{z}^{n}$  is geometrically ergodic.
As a consequence, for any $k>0$,  there exists $n_0 \in \NN$ such that
\begin{equation} \label{upper-p1}
\sup_{n\ge n_0} \limsup_{T \to \infty}\;\frac{1}{T}\;
\Exp^{\Check{z}^{n}} \biggl[ \int_{0}^{T} \babs{\Hat{X}^{n}(s)}^k \,\D{s} \biggr]
\;<\; \infty\,,
\end{equation}
and the same holds if we replace $\Hat{X}^n$ with
$\Hat{Q}^n$ or $\Hat{Y}^n$ in \eqref{upper-p1}.
In other words, the diffusion-scaled cost criterion $J(\Hat{X}^n(0), Z^n)$ is
finite for $n\ge n_0$.
\end{proposition}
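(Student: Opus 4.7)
The plan is to verify the geometric Foster--Lyapunov drift inequality \eqref{Lya-GE} directly for the countable-state Markov chain $\Hat{X}^n$ governed by $\widehat\cL_n^{\Check{z}^{n}}$, and then deduce both geometric ergodicity (as $V$-uniform ergodicity with $V=\Lyap_{k,\beta}$) and the uniform moment bound \eqref{upper-p1} via the standard Meyn--Tweedie / Down--Meyn--Tweedie framework. Irreducibility, aperiodicity, and petiteness of compact sets for $\Hat{X}^n$ are routine consequences of the Poisson arrival structure together with the fact that $\Check{z}^{n}$ is a deterministic state-feedback policy; all the work is in proving \eqref{Lya-GE}.

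The first step is to apply the generator \eqref{E-gen} to $\Lyap_{k,\beta}\circ \Hat{x}^n$ and expand by Taylor's formula in the jump size $1/\sqrt{n}$. Substituting the Halfin--Whitt scaling of Assumption~\ref{as-para} together with the equilibrium identities \eqref{Ex*}--\eqref{Ez*} converts this into
\begin{equation*}
\widehat\cL_n^{\Check{z}^{n}}\Lyap_{k,\beta}(\Hat{x})
\;=\; k\,\Hat{x}_1^{k-1}\,b_1^n(\Hat{x}) + k\beta\,\Hat{x}_2^{k-1}\,b_2^n(\Hat{x}) + D_k^n(\Hat{x}) + R_k^n(\Hat{x})\,,
\end{equation*}
where $b_i^n$ is the effective drift of the $i$-th diffusion-scaled coordinate under $\Check{z}^{n}$ (piecewise affine in $\Hat{x}$, with Lipschitz constant uniform in $n$), $D_k^n(\Hat{x})=\order\bigl(1+\abs{\Hat{x}}^{k-2}\bigr)$ collects the second-order diffusion contribution, and $R_k^n(\Hat{x})=\order\bigl(n^{-1/2}(1+\abs{\Hat{x}}^{k-3})\bigr)$ is the Taylor remainder, both uniform in $n\ge n_0$.

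The heart of the argument is a region-by-region analysis of $\Hat{x}_1^{k-1}b_1^n+\beta \Hat{x}_2^{k-1}b_2^n$. I would partition $\sS^n$ into the piecewise-affine regions determined by the SDP rule (the orderings of $x_1$ relative to $N_1^n$ and $N_1^n+N_{12}^n$, and of $x_2$ relative to $N_{22}^n$), and compute $b_i^n$ explicitly in each. The main obstacle is twofold: (a) in the ``doubly overloaded'' region $x_1>N_1^n+N_{12}^n$, $x_2>N_{22}^n$, if some $\gamma_i$ vanishes the pointwise drift on $\Hat{X}^n_i$ is only $\order(1)$, and negativity must be extracted from the $\Hat\lambda_i$ and $\Hat\mu_{ij}$ corrections in Assumption~\ref{as-para} together with the critical-load identity \eqref{HWpara4}; and (b) in the ``cross'' regions, where pool~$2$ reallocates capacity between the two classes, the effective drifts $b_i^n$ become coupled and produce mixed terms of the form $c_3\,\Hat{x}_1^{k-1}\Hat{x}_2$ and $c_3'\beta\,\Hat{x}_2^{k-1}\Hat{x}_1$. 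The parameter $\beta$ is exactly what absorbs this coupling: Young's inequality gives $\Hat{x}_1^{k-1}\Hat{x}_2\le \epsilon \abs{\Hat{x}_2}^k + C(\epsilon)\abs{\Hat{x}_1}^k$, and choosing first $\epsilon$ small enough to absorb the $\Hat{x}_1^k$ piece into the stabilizing $-c_1\abs{\Hat{x}_1}^k$ term, and then $\beta\ge\beta_0$ large enough to absorb the $c_3\epsilon\abs{\Hat{x}_2}^k$ residue into $-c_2\beta\abs{\Hat{x}_2}^k$, closes the inequality. A symmetric argument handles the complementary cross-region.

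Finally, once \eqref{Lya-GE} holds, Dynkin's formula combined with a routine localization argument gives
\begin{equation*}
\frac{1}{T}\,\Exp^{\Check{z}^{n}}\!\!\int_0^T\!\Lyap_{k,\beta}\bigl(\Hat{X}^n(s)\bigr)\,\D{s}
\;\le\; \frac{C_1}{C_2} + \frac{\Lyap_{k,\beta}(\Hat{X}^n(0))}{C_2\,T}\,,
\end{equation*}
so passing $T\to\infty$ and taking the supremum over $n\ge n_0$ produces \eqref{upper-p1} for $\Hat{X}^n$ for any integer $k\ge 2$, and hence by Jensen's inequality for any $k>0$. The corresponding bounds for $\Hat{Q}^n$ and $\Hat{Y}^n$ are immediate, because under $\Check{z}^{n}$ both $\Hat{Q}^n$ and $\Hat{Y}^n$ are Lipschitz-in-$\Hat{X}^n$ maps with a constant independent of $n$, as can be read off the formulas in Definition~\ref{D3.1}. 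Geometric ergodicity itself is then the standard consequence of \eqref{Lya-GE} via the $V$-uniform-ergodicity theorem applied with $V=\Lyap_{k,\beta}$.
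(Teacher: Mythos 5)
Your proposal and the paper's proof take the same route: a Foster--Lyapunov drift bound obtained by expanding the birth--death generator of $X^n$ applied to $f_n(x) = n^{k/2}\Lyap_{k,\beta}(\Hat{x}^n(x))$, followed by a region-by-region analysis over the affine pieces of the SDP policy $\Check{z}^n$, with Young's inequality and the weight $\beta$ used to absorb the cross-terms that arise where pool~$2$ reallocates capacity. The paper organizes the expansion as $\cL_n^{\Check z^n}f_n = F_n^{(1)} + F_n^{(2)}$ (lower-order/diffusion terms versus leading first-order drift), carries out exactly the case split you describe (Cases~1, 2.1--2.2, 3.1--3.2, 4.1--4.4), and then applies Young's inequality twice --- once to the cross-terms $|\tilde x_1|^{k-1}|\tilde x_2|$, and once to the $\order(n)|\tilde x_i|^{k-2}$ and $\order(\sqrt n)|\tilde x_i|^{k-1}$ remainders --- to close the inequality. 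Your Lipschitz-in-$\Hat X^n$ reduction for the $\Hat{Q}^n$ and $\Hat{Y}^n$ moment bounds is a cleaner packaging of the paper's case-by-case verification of $\Hat q^n$; note only that Lipschitz alone is not enough --- you must also observe $\Hat q^n(0) = \order(1)$, which holds because quantities such as $\lambda_1^n - \mu_{11}^n N_1^n - \mu_{12}^n N_{12}^n$ and $nx_1^* - N_1^n - N_{22}^n$ are $\order(\sqrt n)$ under Assumption~\ref{as-para}.

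There is one genuine misstep. Your remark (a) about the doubly-overloaded region --- that if some $\gamma_i$ vanishes, the needed negativity can ``be extracted from the $\Hat\lambda_i$ and $\Hat\mu_{ij}$ corrections'' and the load identity \eqref{HWpara4} --- does not work. In that region the paper's estimate \eqref{EC1} reads, after normalization,
\begin{equation*}
\order(1)\bigl(|\Hat x_1|^{k-1}+\beta|\Hat x_2|^{k-1}\bigr)
- k\gamma_1^n |\Hat x_1|^{k} - \beta k\gamma_2^n |\Hat x_2|^{k}\,,
\end{equation*}
so the only stabilizing terms of order $|\Hat x_i|^k$ are proportional to $\gamma_i^n$. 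The Halfin--Whitt parameter corrections contribute only $\order(\sqrt n)$ constants, which after scaling multiply $|\Hat x_i|^{k-1}$ and hence are sub-leading; they cannot supply the missing $-c|\Hat x_i|^k$ decay for large $|\Hat x_i|$. If $\gamma_i = 0$, the drift in Case~1 (and also Cases~3.1, 4.2, 4.4) is not coercive and the conclusion fails, as is also signaled by $\Bar\beta_k = \infty$ in \eqref{E-barbeta}. The proposition silently requires $\gamma_1, \gamma_2 > 0$; take that as given and do not try to salvage the vanishing-abandonment case via second-order HW corrections.
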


\begin{proof}
See Appendix~\ref{App1}.
\end{proof}

\begin{remark}
We remark that given \eqref{upper-p1} for $\Hat{X}^n$, the same property may
not hold for $\Hat{Q}^n$ or $\Hat{Y}^n$. 
It always holds if a scheduling policy satisfies the JWC condition
(by the balance equation \eqref{E-sumbal}).
Otherwise, that property needs to be verified under the given scheduling policy.
It can easily checked that if the property holds for any two processes of
$\Hat{X}^n$, $\Hat{Q}^n$ and $\Hat{Y}^n$, then it also holds for the third. 
\end{remark}

%%%%%%%%%%%%%%%%%%%%%%%%%%%%%%%%%%%%%%%%%%%%%%%%%%%%%%%%%%%%%%%%%%%%%
\section{Ergodic Control of the Limiting Diffusion}\label{S4}

\subsection{The controlled diffusion limit} %\label{S4.1}

If the action space is $\bcZn$, or equivalently
$Z^n\in\Breve\fZ^n$, the convergence in distribution of the
diffusion-scaled processes $\Hat{X}^n$ to the
limiting diffusion $X$ in \eqref{sde} is shown
in Proposition~3 in \citet{Atar-05b}.
For the class of multiclass multi-pool networks, the drift of the limiting diffusion
is given implicitly via a linear map in Proposition~3 of \citet{Atar-05b}.
For the N-network, the drift can be explicitly expressed as we show below
in \eqref{diff-drift}. In \citet{AP15}, a leaf elimination algorithm has been
developed to provide an
explicit expression for the drift of the limiting diffusion of general
multiclass multi-pool networks.
In the case of the N-network, the limit process $X$ is an $2$-dimensional diffusion satisfying
the It\^o equation 
\begin{equation} \label{sde}
d X_t\;=\;b(X_t, U_t)\,\D{t} + \Sigma \, \D W_t\,, 
\end{equation}
with initial condition $X_0 = x$ and the control $U_t \in \Act$, where
\begin{equation}\label{E-Act}
\Act \;\df\; \bigl\{ u= (u^c, u^s) \in \RR^{2}_+ \times \RR^2_+
\,\colon\, e\cdot u^c = e\cdot u^s=1\bigr\}\,.
\end{equation} 
In \eqref{sde},
the process $W$ is a $2$-dimensional standard Wiener process independent of the
initial condition $X_0=x$. 

Following the leaf elimination algorithm
for the N-network, the drift of the diffusion can be computed as follows.
Let
\begin{equation}\label{diff-map2}
\widehat{G}[u](x) \;\df\; \begin{pmatrix}
- (e\cdot x)^{-} u_1^s &  x_1- (e\cdot x)^{+} u^c_1+(e\cdot x)^{-} u_1^s\\[5pt]
0 & x_2- (e\cdot x)^{+} u^c_2
\end{pmatrix}\,,\qquad u\in\Act\,. 
\end{equation}
Then the drift $b: \RR^2 \times \Act \to \RR^2$ takes the form
\begin{equation*}
b(x,u)\;=\; \begin{pmatrix}
- \mu_{11} \widehat{G}_{11}[u](x)-\mu_{12} \widehat{G}_{12}[u](x)
- \gamma_i (e\cdot x)^{+} u^c_i + \ell_1\\[5pt]
\mu_{22} \widehat{G}_{22}[u](x)
- \gamma_2 (e\cdot x)^{+} u^c_2 + \ell_2\,,
\end{pmatrix}
\end{equation*}
which can also be written as (see Lemma~4.3 and Section~4.2 in \cite{AP15})
\begin{equation} \label{diff-drift}
b(x, u) \;=\; - B_1 (x - (e\cdot x)^+ u^c) + (e\cdot x)^{-} B_2u^s
- (e\cdot x)^+\Gamma u^c + \ell\,, 
\end{equation}
with 
\begin{equation*} 
B_1 \;\df\; \diag\{\mu_{12}, \mu_{22}\}\,, \quad B_2 \;\df\;
\diag\{\mu_{11} - \mu_{12}, 0\}\,, \quad \Gamma
\;\df\; \diag\{\gamma_1, \gamma_2\}\,.
\end{equation*}
Here,
$\ell\;\df\;(\ell_1,\ell_2)\transp$ is defined by
\begin{equation} \label{ell-limit}
\ell_1 \;\df\; \Hat\lambda_1 -\Hat{\mu}_{11} z^*_{11} - \Hat{\mu}_{12} z^*_{12}\,,
\qandq \ell_2 \;\df\; \Hat\lambda_2 - \Hat{\mu}_{22} z^*_{22}\,. 
\end{equation}
The covariance matrix is given by 
$\Sigma\;\df\;\diag\bigl(\sqrt{2 \lambda_1}, \ \sqrt{2 \lambda_2}\bigr)$. 
The control process $U$ lives in the compact set $\Act$ in \eqref{E-Act}, 
and $U_{t}(\omega)$ is jointly measurable in
$(t,\omega)\in[0,\infty)\times\Omega$.
Moreover, it is \emph{non-anticipative}:
for $s < t$, $W_{t} - W_{s}$ is independent of
\begin{equation*}
\sF_{s} \;\df\;\text{the completion of~} \sigma\{X_{0},U_{r},W_{r},\;r\le s\}
\text{~relative to~}(\sF,\Prob)\,.
\end{equation*}
Let $\Uadm$ be the set of all such controls, referred to as \emph{admissible}
controls. We refer the reader to Section \ref{sec-controlparam} on the
control parameterization.
A mere comparison of \eqref{diff-map2} with \eqref{D6.1D}
makes it clear how the control process $U$ relates to the control
process $U^{n}$ for the $n^{\rm th}$ system in Definition~\ref{D6.1}.

We remark that \eqref{sde} can be regarded as a
piecewise-linear controlled diffusion. 
Note that the matrix $B_1$ is an $M$-matrix. However, there is an additional term
$(e\cdot x)^{-} B_2u^s$ in the drift, which  differs from the class of
piecewise-linear controlled diffusions discussed in Section~3.3 of \citet{ABP14}.
We refer to \eqref{sde} as the \emph{limiting diffusion},
or the \emph{diffusion limit}.

The associated limit processes $Q$, $Y$, and $Z$ satisfy the
following balance equations:  
\begin{equation*}
\begin{split}
&\begin{aligned}
 X_1(t) &\;=\; Q_1(t) +  Z_{11}(t) + Z_{12}(t) \,,\\[5pt]
 X_2 (t) &\;=\; Q_2(t) + Z_{22}(t) \,,
\end{aligned}
\quad
\begin{aligned}
Y_1(t) + Z_{11}(t) &\;=\; 0 \,, \\[5pt]
Y_2(t) + Z_{12}(t) + Z_{22}(t) &\;=\; 0 \,,
\end{aligned}
\end{split}
\end{equation*}
with $Q_i (t) \ge 0$, $Y_j(t) \ge 0$, $i,j =1,2$.
Note that these `balance' conditions
imply that JWC always holds at the diffusion limit, i.e.,
\begin{equation*}
e\cdot Q(t) \;=\; \bigl(e\cdot X(t)\bigr)^{+}\,,
\qquad e \cdot Y(t) \;=\; \bigl(e\cdot X(t)\bigr)^{-} \qquad \forall\,t \ge 0 \,. 
\end{equation*}

\subsection{Control problems for the diffusion limit}\label{S4.2}

We state the three problems which correspond to (P1)--(P3)
in Section~\ref{S3.1}
for the controlled diffusion in \eqref{sde}.
Let $r: \RR^2 \times \Act \to \RR$ be defined by
\begin{equation*}
r(x, u)\;=\;r\bigl(x,(u^c, u^s)\bigr)\;\df\; \Hat{r}\bigl((e\cdot x)^{+}u^c,
(e\cdot x)^{-}u^s\bigr)\,,
\end{equation*}
with the same $\Hat{r}$ in \eqref{Erhat}, that is, 
\begin{equation}\label{E-cost}
r(x, u) \;=\; [(e\cdot x)^{+}]^m \sum_{i=1}^{2} \xi_i (u^c_i)^m
+  [(e\cdot x)^{-}]^m \sum_{j=1}^{2} \zeta_j (u^s_j)^m, \quad m\ge 1\,,
\end{equation}
for the given $\xi=(\xi_1, \xi_2)\transp$ 
and $\zeta=(\zeta_1,\zeta_2)\transp$ in \eqref{Erhat}. 
Let the ergodic cost associated with the controlled diffusion $X$ and the
running cost $r$ be defined as
\begin{equation*}
J_{x,U}[r] \;\df\; \limsup_{T \to \infty}\;\frac{1}{T}\;\Exp_x^U
\left[ \int_{0}^{T} r(X_t, U_t)\,\D{t} \right]\,, \quad U \in \Uadm\,. 
\end{equation*}
\begin{itemize}
\item[\textbf{(P1$\bm'$)}]
(\emph{unconstrained problem})
The running cost function $r(x,u)$ is as in
\eqref{E-cost}. The ergodic control problem is then defined as
\begin{equation}\label{diff-opt}
\varrho^*(x) \;=\; \inf_{U \in \Uadm} \;J_{x,U}[r] \,.
\end{equation}

\item[\textbf{(P2$\bm'$)}]
(\emph{constrained problem})
The running cost function $r_{\mathsf{o}}(x,u)$ is as in
\eqref{E-cost} with $\zeta\equiv 0$.
 Also define
\begin{equation}\label{E-rj}
r_j(x,u)\;\df\; [(e\cdot x)^{-}u^s_j]^{\Tilde{m}}\,,\quad j=1,2\,,
\end{equation}
with $\Tilde{m}\ge1$,
and let $\updelta=(\updelta_1, \updelta_2)$ be a positive vector.
The ergodic control problem under idleness constraints is defined as 
\begin{equation} \label{diff-opt-c}
\begin{split}
\varrho_{\mathsf{c}}^{*}(x) &\;=\; \inf_{U \in \Uadm} \;J_{x,U}[r_{\mathsf{o}}]\,,
\\[5pt]
\text{subject to}\quad
J_{x,U}[r_{j}] &\;\le\; \updelta_j\,,\quad j =1,2\,.
\end{split}
\end{equation}

\item[\textbf{(P3$\bm'$)}]
(\emph{fairness})
The running costs $r_{\mathsf{o}}$, $r_1$ and $r_2$ are as in (P2$'$).
Let $\uptheta$ be a positive constant, and $1\le\Tilde{m}< m$.
The ergodic control problem under idleness fairness is defined as
\begin{equation} \label{diff-opt-f}
\begin{split} 
\varrho_{\mathsf{f}}^{*}(x) &\;=\; \inf_{U \in \Uadm} \;J_{x,U}[r_{\mathsf{o}}]\,,
\\[5pt]
\text{subject to}\quad
J_{x,U}[r_{1}] &\;=\; \uptheta\, J_{x,U}[r_{2}]\,. 
\end{split}
\end{equation}
\end{itemize}

The last problem enforces fairness of idleness allocation among the
two server pools.
Also note that penalizing only the queueing cost in (P1),
raises a well-posedness question, which was resolved
in Corollaries~4.1--4.2 of \citet{AP15}.

The quantities $\varrho^*(x)$, $\varrho^*_{\mathsf{c}}(x)$ and 
$\varrho^*_{\mathsf{f}}(x)$ are called the optimal values of
the ergodic control problems (P1$'$), (P2$'$) and (P3$'$), respectively,
for the controlled diffusion process $X$ with initial state $x$.  
Note that as is shown in Section~3 of \citet{ABP14} and
Sections~3 and~5.4 of \citet{AP15},
the optimal values $\varrho^*(x)$, $\varrho^*_{\mathsf{c}}(x)$ and
$\varrho^*_{\mathsf{f}}(x)$ do not depend on $x\in\RR^{2}$,
and thus we remove their dependence on $x$ in the statements below.

Recall that a control is called \emph{Markov} if
$U_{t} = v(t,X_{t})$ for a measurable map $v\colon\RR_{+}\times\RR^{2}\to \Act$,
and it is called \emph{stationary Markov} if $v$ does not depend on
$t$, i.e., $v\colon\RR^{2}\to \Act$.
Let $\Usm$ denote the set of stationary Markov controls.
Recall also that a control $v\in\Usm$ is called \emph{stable}
if the controlled process is positive recurrent.
We denote the set of such controls by $\Ussm$,
and let $\mu_{v}$ denote the unique invariant probability
measure on $\RR^{2}$ for the diffusion under the control $v\in\Ussm$.
We also let $\cM\df\{\mu_{v}\,\colon\,v\in\Ussm\}$, and
$\eom$ denote the set of ergodic occupation measures corresponding to controls
in $\Ussm$, that is, 
\begin{equation*}\eom\;\df\;\biggl\{\uppi\in\cP(\RR^{2}\times\Act)\,\colon\,
\int_{\RR^{2}\times\Act}\Lg^{u} f(x)\,\uppi(\D{x},\D{u})=0\quad
\forall\,f\in\Cc^{\infty}_c(\RR^{2}) \biggr\}\,,\end{equation*}
where $\Lg^{u}f(x)$ is the controlled extended generator of the diffusion $X$, 
\begin{equation*}
\Lg^{u} f(x) \;\df\;\frac{1}{2} \sum_{i,j=1}^{2} a_{ij}\,\partial_{ij} f(x)
+ \sum_{i=1}^{2} b_{i}(x,u)\, \partial_{i} f(x)\,,\quad u\in\Act\,,
\end{equation*}
with $a \df\Sigma\Sigma\transp$ and
$\partial_{i}\df\tfrac{\partial~}{\partial{x}_{i}}$ and
$\partial_{ij}\df\tfrac{\partial^{2}~}{\partial{x}_{i}\partial{x}_{j}}$.  
The restriction of the ergodic control problem with running cost $r$ to
stable stationary Markov controls is equivalent
to minimizing
\begin{equation*}
\uppi(r)\;=\;\int_{\RR^{2}\times\Act} r(x,u)\,\uppi(\D{x},\D{u})
\end{equation*}
over all $\uppi\in\eom$.
If the infimum is attained in $\eom$, then we say that the ergodic control
problem is \emph{well posed}, and we refer to
any $\Bar\uppi\in\eom$ that attains this infimum
as an \emph{optimal ergodic occupation measure}.

We define the class of admissible controls
$\sU\df\{U=(U^c,U^s)\,\colon\,U^c=v^c(x) = (1,0)~\forall\,x\in\RR^2\}$,
and we also let
\begin{equation}\label{E-barbeta}
\Bar{\beta}_k \;\df\;\frac{(\gamma_1\vee\mu_{11}\vee\mu_{12})^{k+1}}
{\mu_{22}\,(\gamma_1\wedge\mu_{11}\wedge\mu_{12})^k}\,.
\end{equation}
We have the following lemma.

\begin{lemma}\label{L4.1}
Let $\Lyap_{k,\beta}$ be as in \eqref{Lyapk}.
There exist positive constants $C_1$ and $C_2$ depending only on $k$
and $\beta\ge\Bar{\beta}_k$,
such that
\begin{equation*}
\Lg^{U} \Lyap_{k,\beta} (x) \;\le\; C_1 - C_2\, \Lyap_{k,\beta} (x)
\qquad\forall\,U\in\sU\,,\quad
\forall\,x\in\RR^2\,.
\end{equation*}
\end{lemma}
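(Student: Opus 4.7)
The plan is a direct Foster--Lyapunov calculation that exploits the decoupling occurring under $U^c \equiv (1,0)$. Applying $\Lg^U$ to $\Lyap_{k,\beta}$, the second-order contribution $\tfrac{k(k-1)}{2}\bigl(2\lambda_1 |x_1|^{k-2} + 2\beta\lambda_2 |x_2|^{k-2}\bigr)$ lies in $\sorder(\Lyap_{k,\beta})$ and will be absorbed at the end into $C_1$ together with a fraction of the negative term. The substantive task is to bound the drift contribution
\begin{equation*}
k|x_1|^{k-1}\mathrm{sgn}(x_1)\,b_1(x,U) + k\beta|x_2|^{k-1}\mathrm{sgn}(x_2)\,b_2(x,U)
\end{equation*}
uniformly in $U \in \sU$.

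The key observation is that substituting $u^c = (1,0)$ into \eqref{diff-drift} makes $b_2(x,u) = -\mu_{22} x_2 + \ell_2$ autonomous and independent of $u^s$, contributing at most $-k\beta\mu_{22}|x_2|^k + k\beta|\ell_2|\,|x_2|^{k-1}$. For $b_1$, I would split into four regions by the signs of $x_1$ and $e\cdot x$. In the half-plane $e\cdot x \ge 0$ the drift $b_1 = (\mu_{12}-\gamma_1) x_2 - \gamma_1 x_1 + \ell_1$ is free of $u^s$. In the half-plane $e\cdot x < 0$ the drift reads $b_1 = -\mu_{12} x_1 + (e\cdot x)^- (\mu_{11}-\mu_{12}) u^s_1 + \ell_1$; here one uses $u^s_1 \in [0,1]$ together with the sign of $\mu_{11}-\mu_{12}$ to eliminate $u^s_1$ in the worst case. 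A routine sub-case check then yields, uniformly in $U \in \sU$,
\begin{equation*}
\mathrm{sgn}(x_1)\,b_1(x,U) \;\le\; -(\gamma_1\wedge\mu_{11}\wedge\mu_{12})\,|x_1| + (\gamma_1\vee\mu_{11}\vee\mu_{12})\,|x_2| + c_0,
\end{equation*}
where $c_0$ depends only on $|\ell_1|$.

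Combining the two components gives a drift contribution dominated by $-k(\gamma_1\wedge\mu_{11}\wedge\mu_{12})\,|x_1|^k + k(\gamma_1\vee\mu_{11}\vee\mu_{12})\,|x_1|^{k-1}|x_2| - k\beta\mu_{22}\,|x_2|^k$ plus lower-order terms. Applying Young's inequality $|x_1|^{k-1}|x_2| \le \tfrac{(k-1)\epsilon}{k}|x_1|^k + \tfrac{1}{k\epsilon^{k-1}}|x_2|^k$ with $\epsilon$ of order $(\gamma_1\wedge\mu_{11}\wedge\mu_{12})/(\gamma_1\vee\mu_{11}\vee\mu_{12})$ absorbs the cross term into the negative $|x_1|^k$ contribution, leaving a residual $|x_2|^k$ coefficient of order $(\gamma_1\vee\mu_{11}\vee\mu_{12})^k/(\gamma_1\wedge\mu_{11}\wedge\mu_{12})^{k-1}$; this is dominated by $k\beta\mu_{22}$ precisely when $\beta \ge \Bar\beta_k$ as defined in \eqref{E-barbeta}. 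The remaining lower-order terms in $|x_1|^{k-1}$, $|x_2|^{k-1}$, and $|x_i|^{k-2}$ are absorbed by a further Young's inequality application, yielding $\Lg^U \Lyap_{k,\beta} \le C_1 - C_2 \Lyap_{k,\beta}$. The main obstacle is the case analysis of $b_1$ in the half-plane $e\cdot x < 0$, where the dependence on $u^s_1$ and the sign of $\mu_{11}-\mu_{12}$ must be handled simultaneously to preserve uniformity in $U \in \sU$; the rest is routine Lyapunov bookkeeping.
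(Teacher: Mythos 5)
Your proposal is correct and follows essentially the same route as the paper's proof: decouple $b_2$ under $u^c=(1,0)$, split $b_1$ into the half-planes $e\cdot x\ge 0$ and $e\cdot x<0$ and take a uniform bound over $u^s\in[0,1]^2$ with $e\cdot u^s=1$, then apply Young's inequality with exponent ratio governed by $\alpha=(\gamma_1\wedge\mu_{11}\wedge\mu_{12})/(\gamma_1\vee\mu_{11}\vee\mu_{12})$ to absorb the cross term $|x_1|^{k-1}|x_2|$, which is precisely how the threshold $\Bar{\beta}_k$ arises. The residual lower-order terms are handled by further applications of Young's inequality exactly as you describe.
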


\begin{proof}
By \eqref{diff-drift} we have
\begin{align*}
b_1\bigl(x,U\bigr) &\;=\;
\begin{cases} -\gamma_1 x_1 + (\mu_{12} - \gamma_1) x_2 +\ell_1&
\text{if~} (e\cdot x)^+ \ge 0\\[5pt]
-\bigl(\mu_{11} U_1^s + \mu_{12} U_2^s\bigr) x_1
- (\mu_{11}-\mu_{12})U_1^s\, x_2 + \ell_1&
\text{otherwise,}
\end{cases}\\[5pt]
b_2\bigl(x,U\bigr) &\;=\; - \mu_{22} x_2 + \ell_2\qquad
\forall\,x\in\RR^2\,.
\end{align*}
Therefore,
\begin{multline}\label{L4.1A}
\Lg^{U} \Lyap_{k,\beta} (x) \;\le\;
- k(\gamma_1\wedge\mu_{11}\wedge\mu_{12}) \abs{x_1}^k
+ k(\gamma_1\vee\mu_{11}\vee\mu_{12}) \abs{x_2}\abs{x_1}^{k-1}\\[3pt]
- \beta k\mu_{22} \abs{x_2}^k + k\ell_1\abs{x_1}^{k-1}
+\beta k\ell_2\abs{x_2}^{k-1}
+k(k-1)\bigl(\lambda_1\abs{x_1}^{k-2}+\lambda_2\beta\abs{x_2}^{k-2}\bigr)\,.
\end{multline}

Let
\begin{equation*}
\alpha\;\df\;\frac{\gamma_1\wedge\mu_{11}\wedge\mu_{12}}
{\gamma_1\vee\mu_{11}\vee\mu_{12}}\,.
\end{equation*}
Using \rd{Young's} inequality we write
\begin{equation*}
\abs{x_2}\abs{x_1}^{k-1}\;\le\;
(k-1)\frac{\alpha^{\frac{k}{k-1}}}{k}\abs{x_1}^k +
\frac{\alpha^{-k}}{k}\abs{x_2}^k
\;\le\;
(k-1)\frac{\alpha}{k}\abs{x_1}^k +
\frac{\alpha^{-k}}{k}\abs{x_2}^k\,. 
\end{equation*}
Thus, by \eqref{L4.1A}, we have
\begin{multline*}
\Lg^{U} \Lyap_{k,\beta} (x) \;\le\;
- (\gamma_1\wedge\mu_{11}\wedge\mu_{12}) \abs{x_1}^k
- (k\beta\mu_{22}-\Bar\beta_k)\abs{x_2}^k
+ k\ell_1\abs{x_1}^{k-1} +\beta k\ell_2\abs{x_2}^{k-1}\\[3pt]
+k(k-1)\bigl(\lambda_1\abs{x_1}^{k-2}+\lambda_2\beta\abs{x_2}^{k-2}\bigr)\,,
\end{multline*}
from which the result easily follows.
\end{proof}

As shown in Corollary~4.2 of \citet{AP15}, for any $k\ge1$,
there exists a constant $C=C(k)>0$
such that any solution $X_t$ of \eqref{sde} with $X_0=x_0\in\RR^2$ satisfies
\begin{equation}\label{E-apriori}
\Exp^U_x \biggl[\int_{0}^{T} \abs{X_t}^k\,\D{t}\biggr]
\;\le\; C\abs{x_{0}}^k+ CT +
C \Exp^U_x \biggl[\int_{0}^{T} \bigl((e\cdot X_t)^+\bigr)^k\,\D{t}\biggr]
\qquad \forall\, U\in\Uadm\,,\quad\forall\,T>0\,.
\end{equation}
This property plays a crucial role in solving (P1$'$)--(P3$'$).

\subsection{Optimal solutions to problems (P1\ensuremath{'})--(P3\ensuremath{'})}

The characterization of the optimal solutions to the ergodic
control problems (P1$'$)--(P3$'$)
has been thoroughly studied in \citet{ABP14} and \citet{AP15}.
We review some results that are used in the sections which follow to construct
asymptotically optimal scheduling policies and prove asymptotic optimality.
We first introduce some notation. 
Let 
\begin{equation} \label{E-H}
H_r(x,p)\;\df\;\min_{u\in\Act}\;\bigl[b(x,u)\cdot p + r(x,u)\bigr]\,
\quad \text{for\ \ }x, p \in \RR^2\,. 
\end{equation}
For
$\updelta=(\updelta_1,\updelta_2)\in\RR_{+}^2$, 
let
\begin{equation*}
\sH(\updelta)\;\df\; \bigl\{\uppi\in\eom\;\colon\,
\uppi(r_{j})\le \updelta_{j}\,,\; j=1, 2\bigr\}\,,\qquad
\sH^{\mathrm{o}}(\updelta)
\;\df\; \bigl\{\uppi\in\eom\;\colon\,
\uppi(r_{j})< \updelta_{j}\,,\; j=1,2 \bigr\}\,.
\end{equation*}

For $\updelta\in\RR_{+}^2$ and
$\uplambda=(\uplambda_{1}\,, \uplambda_2)\transp
\in\RR^{2}_{+}$
define the running cost $g_{\updelta,\uplambda}$ by
\begin{equation*}
g_{\updelta,\uplambda}(x,u) \;\df\; r_{\mathsf{o}}(x,u) + \sum_{j=1}^{2}
\uplambda_{j}\bigl(r_{j}(x,u)-\updelta_{j}\bigr)\,.
\end{equation*}

We say that the vector $\updelta\in (0,\infty)^{2}$
is \emph{feasible} (or that the constraints in \eqref{diff-opt-c} are feasible)
if  there exists $\uppi'\in\sH^{\mathrm{o}}(\updelta)$ such that
$\uppi'(r_{\mathrm{o}})<\infty$.
The following is contained in Theorem~5.2 of \cite{AP15}.

\begin{theorem}
For the ergodic control problem in \eqref{diff-opt}, there exists a unique
solution $V \in  \Cc^2(\RR^2)$, satisfying $V(0)=0$, to the associated HJB equation: 
\begin{equation*}
\min_{u\in\Act}\;\bigl[\Lg^{u}V(x)+r(x,u)\bigr]
\;=\;\varrho^{*}\,.
\end{equation*}
Moreover, a stationary Markov control $v \in \Ussm$ is optimal
if and only if it satisfies
\begin{equation*}
H_r\bigl(x,\nabla V(x)\bigr)
\;=\;b\bigl(x,v(x)\bigr)\cdot\nabla V(x)
+ r\bigl(x,v(x)\bigr)\quad\text{a.e.~in~} \RR^{2}\,.
\end{equation*}
\end{theorem}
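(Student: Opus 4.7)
My plan is to verify the hypotheses of Theorem~5.2 in \citet{AP15} for the controlled diffusion in \eqref{sde}, since the statement is essentially an instance of that result specialized to the N-network.  The two structural ingredients needed are a blanket-like Lyapunov bound guaranteeing a nonempty collection of stable stationary Markov controls with finite cost, and a uniform a priori growth estimate on solutions of the discounted HJB equation as the discount parameter vanishes.

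First I would argue existence. By Lemma~\ref{L4.1}, every $U \in \sU$ is stable (in fact produces geometric ergodicity with finite moments of every order), so $\eom$ contains the ergodic occupation measures induced by $\sU$, and for any such $\uppi$ the integral $\uppi(r)$ is finite because the running cost $r$ in \eqref{E-cost} has polynomial growth of order~$m$ and $\Lyap_{k,\beta}$ controls moments of arbitrary order. Hence the infimum in \eqref{diff-opt} is finite. Combining this with the coercivity furnished by Lemma~\ref{L4.1} and the a priori estimate \eqref{E-apriori}, I can show that any minimizing sequence of ergodic occupation measures is tight, and lower semicontinuity of $\uppi \mapsto \uppi(r)$ together with the closedness of $\eom$ in the topology of stable convergence yields an optimal ergodic occupation measure $\Bar\uppi^{*} \in \eom$ with $\Bar\uppi^{*}(r) = \varrho^{*}$.

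Next I would construct the HJB solution via the vanishing discount method. For each $\alpha > 0$, solve the $\alpha$-discounted HJB equation $\min_{u \in \Act}[\Lg^{u} V_{\alpha}(x) + r(x,u)] = \alpha V_{\alpha}(x)$; existence and $\Cc^{2}$-regularity of $V_{\alpha}$ follow from standard elliptic theory as in \cite{ABP14,AP15}, using the polynomial growth of $r$ and compactness of $\Act$. The Lyapunov bound of Lemma~\ref{L4.1} and \eqref{E-apriori} then translate into uniform (in $\alpha$) local bounds on $V_{\alpha} - V_{\alpha}(0)$ together with their first and second derivatives on compact sets, via an interior Harnack-type argument for the $V_{\alpha}$'s. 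Passing to the limit $\alpha \downarrow 0$ along a subsequence and using the stability of $\Cc^{2}$-estimates under locally uniform convergence produces a function $V \in \Cc^{2}(\RR^{2})$, normalized by $V(0)=0$, and a constant $\varrho \in \RR$ satisfying the HJB equation, with $\varrho = \varrho^{*}$ identified through $\Bar\uppi^{*}$.

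The verification and uniqueness steps are then standard. For any $v \in \Ussm$, apply It\^o's formula to $V$ along the controlled diffusion, divide by $T$, and pass to the limit; the Lyapunov bound justifies the vanishing of the martingale term and of $V(X_{T})/T$ in expectation, yielding $J_{x,v}[r] \ge \varrho^{*}$, with equality if and only if $v$ realizes the minimum in the HJB equation pointwise a.e.  Uniqueness of $V$ among $\Cc^{2}$-solutions vanishing at the origin follows from the same stochastic representation applied to the difference of two such solutions, combined with the stability of an optimal Markov selector. The step I expect to be most delicate is obtaining the uniform $\Cc^{2}_{\mathrm{loc}}$ bounds on $V_{\alpha}$ needed for the vanishing discount limit, because the drift \eqref{diff-drift} is only piecewise linear (owing to the $(e\cdot x)^{\pm}$ terms) and the running cost degenerates in the half-plane $\{e\cdot x \le 0\}$; this is precisely where the a priori estimate \eqref{E-apriori}, which converts moments of $X$ into moments of $(e\cdot X)^{+}$, is indispensable.
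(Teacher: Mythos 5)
Your proposal is correct and takes essentially the same approach as the paper: the paper proves this theorem simply by citing Theorem~5.2 of \citet{AP15}, and your sketch correctly reconstructs the vanishing-discount argument underlying that result, identifying Lemma~\ref{L4.1} and the a priori estimate \eqref{E-apriori} as the key structural ingredients. One small slip: \eqref{E-apriori} bounds moments of $\abs{X}$ in terms of moments of $(e\cdot X)^{+}$, which is the opposite direction from what you wrote, though it is used exactly in the way you intend.
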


The following is contained in Lemmas~3.3--3.5, and Theorems~3.1--3.2 of
\cite{AP15}.

\begin{theorem}\label{T4.2}
Suppose that $\updelta$ is feasible
for the ergodic control problem under constraints in \eqref{diff-opt-c},
i.e., there exists $\uppi'\in\sH^{\mathrm{o}}(\updelta)$ such that
$\uppi'(r_{\mathsf{o}})<\infty$.
Then the following hold.
\begin{itemize}
\item[\upshape{(}a\upshape{)}]
There exists 
$\uplambda^{*}\in\RR^{2}_{+}$ such that
\begin{equation*}
\inf_{\uppi\,\in\,\sH(\updelta)}\;\uppi(r_{\mathsf{o}}) \;=\;
\inf_{\uppi\,\in\,\eom}\;\uppi(g_{\updelta,\uplambda^{*}})
\;=\;\varrho^*_{\mathsf{c}}\,.
\end{equation*}
\item[\upshape{(}b\upshape{)}]
If $\uppi^{*}\in\sH(\updelta)$
attains the infimum of $\uppi\mapsto\uppi(r_{\mathsf{o}})$
in $\sH(\updelta)$, then
$\uppi^{*}(r_{\mathsf{o}})\;=\;\uppi^{*}(g_{\updelta,\uplambda^{*}})$,
and
\begin{equation*}
\uppi^{*}(g_{\updelta,\uplambda})\;\le\;\uppi^{*}(g_{\updelta,\uplambda^{*}})
\;\le\;
\uppi(g_{\updelta,\uplambda^{*}})
\qquad\forall\,(\uppi,\uplambda)\in\eom\times\RR^{2}_{+}\,.
\end{equation*}
\item[\upshape{(}c\upshape{)}]
There exists  $V_{\mathsf{c}}\in\Cc^{2}(\RR^2)$ satisfying 
\begin{equation*}
\min_{u\in\Act}\;\bigl[\Lg^{u}V_{\mathsf{c}}(x)
+ g_{\updelta,\uplambda^{*}}(x,u)\bigr]
\;=\; \uppi^{*}(g_{\updelta,\uplambda^{*}}) = \varrho^*_{\mathsf{c}}\,,\quad
x\in\RR^{2}\,.
\end{equation*}
\item[\upshape{(}d\upshape{)}]
A stationary Markov control $v_{\mathsf{c}}\in\Ussm$
is optimal if and only if it satisfies 
\begin{equation*}
H_{g_{\updelta,\uplambda^{*}}}\bigl(x,\nabla V_{\mathsf{c}}(x)\bigr) \;=\;
b\bigl(x, v_{\mathsf{c}}(x)\bigr)\cdot \nabla V_{\mathsf{c}}(x)
+ g_{\updelta,\uplambda^{*}}\bigl(x,v_{\mathsf{c}}(x)\bigr)
\quad\text{a.e.~in~} \RR^{2}\,,
\end{equation*}
where $H_{g_{\updelta,\uplambda^{*}}}$ is defined in \eqref{E-H} with $r$
replaced by $g_{\updelta,\uplambda^{*}}$.
\item[\upshape{(}e\upshape{)}]
The map
$\updelta\mapsto\inf_{\uppi\,\in\,\sH(\updelta)}\;\uppi(r_{\mathsf{o}})$
is continuous at any feasible point $\Hat\updelta$.
\end{itemize}
\end{theorem}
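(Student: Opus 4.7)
The plan is to establish Theorem~\ref{T4.2} by verifying the hypotheses of the general convex-analytic framework for constrained ergodic control developed in \cite{AP15} and then invoking the cited results. The three structural ingredients I would check first are: (i) nonemptiness of $\Ussm$ with uniform moment bounds, which follows from Lemma~\ref{L4.1} applied to any $v\in\sU$, giving $\mu_v(\Lyap_{k,\beta}) \le C_1/C_2$; (ii) the near-monotonicity of $r_{\mathsf{o}}$, namely $\liminf_{|x|\to\infty} r_{\mathsf{o}}(x,u) > \varrho^*_{\mathsf{c}}$ uniformly in $u$, which would follow by combining the lower bound $r_{\mathsf{o}}(x,u) \ge c_1\bigl[(e\cdot x)^+\bigr]^m$ with the a priori estimate \eqref{E-apriori}; and (iii) the Slater-type condition supplied by the hypothesis $\uppi'\in\sH^{\mathrm{o}}(\updelta)$ with $\uppi'(r_{\mathsf{o}})<\infty$, which ensures that the constrained problem is non-degenerate.

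For part~(a), I would exploit convexity: $\eom$ is convex, and the maps $\uppi\mapsto\uppi(r_{\mathsf{o}}),\uppi(r_j)$ are linear. Near-monotonicity combined with \eqref{E-apriori} implies that sub-level sets $\{\uppi \in \eom\,\colon\,\uppi(r_{\mathsf{o}}) \le K\}$ are tight in the $(1+|x|^m)$-weighted weak topology, so the infimum over $\sH(\updelta)$ is attained by some $\uppi^*$. A Hahn--Banach/minimax argument with the strict feasibility of $\uppi'$ supplies a Lagrange multiplier $\uplambda^*\in\RR^2_+$ such that the unconstrained infimum of $\uppi(g_{\updelta,\uplambda^*})$ over $\eom$ agrees with $\varrho^*_{\mathsf{c}}$. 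Part~(b) is then standard saddle-point arithmetic: complementary slackness $\uplambda^*_j(\uppi^*(r_j)-\updelta_j)=0$ forces $\uppi^*(g_{\updelta,\uplambda^*})=\uppi^*(r_{\mathsf{o}})$, while the minimax property at $(\uppi^*,\uplambda^*)$ supplies the two inequalities in the stated sandwich.

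For parts~(c) and~(d), I would apply the general HJB existence result for ergodic control with near-monotone running cost (Theorem~3.1 in \cite{AP15}) to the unconstrained problem with cost $g_{\updelta,\uplambda^*}$. Since $g_{\updelta,\uplambda^*}$ inherits near-monotonicity from $r_{\mathsf{o}}$ (the extra terms in $r_j$ are nonnegative with at most the same growth), the vanishing-discount method produces $V_{\mathsf{c}}\in\Cc^2(\RR^2)$ satisfying the HJB equation with right-hand side $\varrho^*_{\mathsf{c}}$; standard It\^o-plus-ergodicity verification then yields the iff characterization in~(d), using the moment bound $\mu_{v_{\mathsf{c}}}(|x|^m)<\infty$ from~\eqref{E-apriori} to justify the martingale step. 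For part~(e), the continuity of $\updelta \mapsto \inf_{\sH(\updelta)}\uppi(r_{\mathsf{o}})$ at a feasible $\Hat\updelta$ would follow from convexity in $\updelta$ (inherited from the linearity of the constraint maps) together with finiteness on a neighborhood of $\Hat\updelta$ secured by strict feasibility.

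The main obstacle I anticipate is verifying the tightness/compactness needed to attain the infimum in~(a): the constraint set $\sH(\updelta)$ lives in a noncompact convex cone, and the running cost $r_{\mathsf{o}}$ only controls $(e\cdot x)^+$, so it is essential to use \eqref{E-apriori} to upgrade integrability of $(e\cdot x)^+$ to integrability of $|x|$ itself, and hence to tightness of sub-level sets in the $(1+|x|^m)$-weighted topology. Everything else -- the duality, the HJB construction, the verification, and the continuity -- then reduces to applying the framework of \cite{AP15} verbatim.
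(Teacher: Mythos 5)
The paper offers no proof of Theorem~\ref{T4.2} beyond the citation ``contained in Lemmas~3.3--3.5, and Theorems~3.1--3.2 of \cite{AP15}''; your proposal is a reconstruction of the underlying argument, and your overall architecture (Lagrangian duality over $\eom$ via Slater's condition, attainment of the infimum by a compactness argument in the weighted weak topology, vanishing-discount construction of $V_{\mathsf{c}}$, It\^o verification, convexity of the value-in-$\updelta$ for continuity) does match the route taken in \cite{AP15}, so in broad strokes the reconstruction is faithful.

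There is one concrete error, however. In item~(ii) of your structural checklist you assert near-monotonicity of $r_{\mathsf{o}}$, namely $\liminf_{|x|\to\infty} r_{\mathsf{o}}(x,u) > \varrho^*_{\mathsf{c}}$ uniformly in $u$, and claim it follows from the lower bound $r_{\mathsf{o}}(x,u)\ge c_1[(e\cdot x)^+]^m$ combined with \eqref{E-apriori}. This cannot be right: $r_{\mathsf{o}}$ is \emph{identically zero} on the half-space $\{e\cdot x\le 0\}$, so $\liminf_{|x|\to\infty} r_{\mathsf{o}}(x,u)=0$, and no amount of a priori moment estimation can change a pointwise property of the cost function. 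Near-monotonicity simply fails here. The actual mechanism in \cite{AP15}---and you do state it correctly in your closing paragraph---is that \eqref{E-apriori} serves as a \emph{substitute} for near-monotonicity: it allows one to bound $\uppi(|x|^m)$ by $\uppi(r_{\mathsf{o}})$ (plus constants) for $\uppi\in\eom$, which delivers the tightness of sub-level sets that near-monotonicity would otherwise supply. The consequence is thus recovered, but not by restoring the pointwise property, and the argument must be threaded through every step that would normally invoke near-monotonicity (attainment of the infimum, the vanishing-discount estimates, the verification step). If you delete the near-monotonicity claim from (ii) and replace it with the statement already in your final paragraph, the proposal is sound.
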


For uniqueness of the solutions $V_{\mathsf{c}}$ see Theorem~3.2
in \citet{AP15}.

We now turn to the constrained ergodic control problem in \eqref{diff-opt-f}.
Lemma~\ref{L4.1} implies that Assumption~5.1 in \cite{AP15} holds,
and consequently the solution of (P3$'$) follows by
Theorem~5.8 in the same paper.
However, the Lagrangian in (P3$'$) is not bounded below in $\RR^2$, and since
no details were provided in \cite{AP15} on the existence of solutions
to the HJB equation, we provide a proof in Appendix~\ref{App2}.

\begin{theorem}\label{T4.3}
For any $\uptheta>0$ the constraint in \eqref{diff-opt-f} is feasible.
All the conclusions of Theorem~\ref{T4.2} hold, provided that we replace $\sH(\updelta)$ and 
$g_{\updelta,\uplambda}$ with
\begin{equation} \label{E-sH-f-theta}
\sH_{\mathsf{f}}(\uptheta)
\;\df\; \bigl\{\uppi\in\eom\;\colon\, \uppi(r_{1})=\uptheta\,\uppi(r_{2})\bigr\}\,,
\end{equation}
and
\begin{equation*}
h_{\uptheta,\uplambda} (x,u)\;\df\; r_{\mathsf{o}}(x,u) + 
\uplambda\bigl(r_{1}(x,u)-\uptheta\, r_{2}(x,u)\bigr)\,,\quad
\uplambda\in\RR\,,
\end{equation*}
respectively. 
\end{theorem}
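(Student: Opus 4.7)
The plan is to establish the three distinct components of Theorem~\ref{T4.3}: (i) feasibility of the constraint in \eqref{diff-opt-f}, (ii) the Lagrangian/saddle-point structure that imports the conclusions of Theorem~\ref{T4.2}, and (iii) existence of a classical solution to the HJB equation for the Lagrangian $h_{\uptheta,\uplambda^*}$. The main obstacle is entirely in step (iii), because $h_{\uptheta,\uplambda^*}$ fails to be bounded below on $\RR^2\times\Act$.

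\emph{Feasibility} I would produce an explicit one-parameter family in $\sH_{\mathsf{f}}(\uptheta)$ with finite queueing cost. Consider the constant controls $v_\alpha=(v^c,v^s_\alpha)\in\Usm$ with $v^c\equiv(1,0)$ and $v^s_\alpha\equiv(\alpha,1-\alpha)$ for $\alpha\in(0,1)$; each lies in the class $\sU$ of Lemma~\ref{L4.1} and hence is geometrically ergodic. Let $\uppi_\alpha\in\eom$ be the associated ergodic occupation measure. Since $u^s$ enters the drift and the constraint costs $r_1,r_2$ only through the factor $(e\cdot x)^-$, a direct computation gives $\uppi_\alpha(r_1)/\uppi_\alpha(r_2)=(\alpha/(1-\alpha))^{\Tilde{m}}$, which sweeps $(0,\infty)$ continuously as $\alpha$ varies in $(0,1)$. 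The intermediate value theorem then yields $\alpha^*\in(0,1)$ with $\uppi_{\alpha^*}\in\sH_{\mathsf{f}}(\uptheta)$, and $\uppi_{\alpha^*}(r_{\mathsf{o}})<\infty$ follows from Lemma~\ref{L4.1} applied with $k=m$.

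\emph{Lagrangian structure} With feasibility established, the arguments behind parts (a),(b),(d),(e) of Theorem~\ref{T4.2} carry over from \citet{AP15} essentially verbatim: tightness of minimizing sequences in $\sH_{\mathsf{f}}(\uptheta)$ follows from the uniform Foster--Lyapunov bound of Lemma~\ref{L4.1} (which verifies Assumption~5.1 of \cite{AP15}), yielding an optimal $\uppi^*\in\sH_{\mathsf{f}}(\uptheta)$; a standard separation/duality argument applied to $\{(\uppi(r_{\mathsf{o}}),\uppi(r_1)-\uptheta\uppi(r_2))\,\colon\,\uppi\in\eom\}$ in $\cP(\RR^{2}\times\Act)$ produces a multiplier $\uplambda^*\in\RR$ and the saddle-point inequalities; continuity in $\uptheta$ is obtained by perturbing the family $\{\uppi_\alpha\}$. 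The only substantive difference from the inequality-constrained setting is that $\uplambda^*$ now need not have a sign.

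\emph{HJB existence} This is where I anticipate the work. For each discount $\alpha>0$ the equation
\[\alpha V^\alpha(x)+\min_{u\in\Act}\bigl[\Lg^{u}V^\alpha(x)+h_{\uptheta,\uplambda^*}(x,u)\bigr]\;=\;0\]
admits a classical solution $V^\alpha\in\Cc^2(\RR^2)$ via stochastic representation combined with interior $W^{2,p}$ regularity for HJB equations. To pass to $\alpha\downarrow 0$, the essential input is the strict inequality $\Tilde{m}<m$: pick any $k$ with $\Tilde{m}<k\le m$, so that by Young's inequality the negative part of $h_{\uptheta,\uplambda^*}$ satisfies
\[\bigl|h^{-}_{\uptheta,\uplambda^*}(x,u)\bigr|\;\le\;|\uplambda^*|\uptheta\,[(e\cdot x)^{-}]^{\Tilde{m}}\;\le\;\varepsilon\,\Lyap_{k,\beta}(x)+C_\varepsilon\]
for any $\varepsilon>0$. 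Combined with Lemma~\ref{L4.1} this produces uniform-in-$\alpha$ bounds for $\Exp_x^{U}\int_0^T|h_{\uptheta,\uplambda^*}(X_t,U_t)|\,\D{t}$ over $U\in\sU$, hence uniform local bounds for $\Bar V^\alpha\df V^\alpha-V^\alpha(0)$ as well as $\alpha V^\alpha(0)\to\varrho^{*}_{\mathsf{f}}$. Interior elliptic regularity then yields equicontinuity of $\Bar V^\alpha$ on compacts, and a subsequential limit in $\Cc^{2}_{\mathrm{loc}}$ furnishes the desired $V_{\mathsf{f}}\in\Cc^{2}(\RR^2)$ solving the HJB equation. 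The optimality characterization in part (d) follows from the It\^o--Dynkin formula, with the Lyapunov bound used to dispose of the boundary term at infinity. The hardest step will be uniform two-sided control of $\Bar V^\alpha$ as $\alpha\downarrow 0$ despite the unboundedness below of $h_{\uptheta,\uplambda^*}$; the separation $\Tilde{m}<m$ imposed in (P3$'$) is precisely what drives this estimate.
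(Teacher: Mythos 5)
Your feasibility argument via the constant controls $v_\alpha$ with $v^c\equiv(1,0)$, $v^s_\alpha\equiv(\alpha,1-\alpha)$ is correct and more explicit than what the paper offers (which essentially takes feasibility for granted). Since each $v_\alpha\in\sU$, Lemma~\ref{L4.1} gives $\uppi_\alpha(r_{\mathsf{o}})<\infty$, the ratio $\uppi_\alpha(r_1)/\uppi_\alpha(r_2)=(\alpha/(1-\alpha))^{\Tilde m}$ is control-determined and independent of the invariant measure (one can even solve $\alpha=\uptheta^{1/\Tilde m}/(1+\uptheta^{1/\Tilde m})$ explicitly rather than invoke the IVT), and $\uppi_\alpha(r_2)>0$ by the Harnack inequality. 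The Lagrangian/saddle-point discussion also matches the paper.

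The HJB step, however, departs from the paper and contains a genuine gap. The paper does \emph{not} run a single vanishing-discount limit on $h_{\uptheta,\uplambda^*}$. Instead it first perturbs the running cost to $h^\epsilon\df h_{\uptheta,\uplambda^*}+\epsilon\kappa_2\abs{x}^m$; because $\abs{h^-_{\uptheta,\uplambda^*}}\in\sorder(\abs{x}^m)$ (this is where $\Tilde m<m$ is used), $h^\epsilon$ is inf-compact for every $\epsilon>0$. For each fixed $\epsilon$ it then solves the ergodic HJB (via a discounted intermediate step in which a constant is added to make the discounted value strictly positive on compacts, so that the oscillation lemmas of \cite{ABP14} apply), obtains an oscillation estimate $\osc_{B_R}V^\epsilon\le k_R$ uniform in $\epsilon\in(0,1)$, and finally lets $\epsilon\searrow 0$. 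The $\epsilon\abs{x}^m$ perturbation is exactly what restores the inf-compact/near-monotone structure on which the whole existence and Harnack-type machinery rests.

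Your sketch bypasses this perturbation, and the step ``this produces uniform-in-$\alpha$ bounds for $\Exp^U_x\int_0^T\abs{h_{\uptheta,\uplambda^*}(X_t,U_t)}\,\D t$ over $U\in\sU$, hence uniform local bounds for $\bar V^\alpha$'' does not follow. Bounds under controls $U\in\sU$ control $V^\alpha$ only \emph{from above}, since $V^\alpha$ is an infimum. A lower bound on $\bar V^\alpha$, uniform in $\alpha$, would require controlling $\Exp^{U^*}_x\int_0^\infty e^{-\alpha s}\bigl[(e\cdot X_s)^-\bigr]^{\Tilde m}\D s$ under near-optimal controls $U^*$, which need not lie in $\sU$. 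On $\{e\cdot x<0\}$ we have $r_{\mathsf{o}}\equiv 0$, so there is nothing positive in $h_{\uptheta,\uplambda^*}$ to offset the $\uplambda^* r_j$ terms there; a bootstrap through \eqref{E-apriori} gives a lower bound that scales as $\alpha^{-1}$, not the $\alpha$-uniform oscillation estimate $\osc_{B_R}V^\alpha\le k_R$ that the vanishing-discount limit requires, nor the positivity of a shifted discounted value function that the Harnack argument needs. In short, you need a substitute for the $\epsilon\abs{x}^m$ perturbation, and Young's inequality plus Lemma~\ref{L4.1} alone do not provide one.
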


%%%%%%%%%%%%%%%%%%%%%%%%%%%%%%%%%%%%%%%%%%%%%%%%%%%%%%%%%%%%%%%%%%%%%
\section{Asymptotic Optimality} \label{S5}
In this section, we present the main results on asymptotic optimality. 
We show that the values of the three ergodic control
problems in the diffusion scale converge to the values of the corresponding
ergodic control problems for the limiting diffusion, respectively. 
The proofs of the lower and upper bounds are given in
Sections~\ref{S8} and \ref{S9}, respectively. 

Recall the definitions of $J$, $J_{\mathsf{o}}$, $\Hat{V}^{n}$,
$\Hat{V}^{n}_{\mathsf{c}}$, and
$\Hat{V}^{n}_{\mathsf{f}}$ in (P1)--(P3),
and the definitions of $\varrho^*$, $\varrho^{*}_{\mathsf{c}}$,
and $\varrho^{*}_{\mathsf{f}}$ in (P1$'$)--(P3$'$).

\begin{theorem} \label{T-lbound} $($lower bounds$)$
Let $\Hat{X}^{n}(0) \Rightarrow x \in \RR^2$ as $n\to \infty$.
The following hold:
\begin{itemize}
\item[\upshape{(}i\upshape{)}]
For any sequence $\{Z^{n},\;n\in\NN\}\subset\boldsymbol\fZ$
the diffusion-scaled cost in \eqref{cost-ds} satisfies
\begin{equation*}
\liminf_{n\to\infty}\;
J\bigl(\Hat{X}^{n}(0),\Hat{Z}^{n}\bigr) \;\ge\; \varrho^*\,.
\end{equation*}

\item[\upshape{(}ii\upshape{)}]
Suppose that under a sequence
$\{Z^{n},\;n\in\NN\}\subset\boldsymbol\fZ$ the constraint
in \eqref{constraint-ds} is satisfied
for all sufficiently large $n\in\NN$.
Then
\begin{equation*} 
\liminf_{n\to\infty}\; J_{\mathsf{o}}\bigl(\Hat{X}^{n}(0),\Hat{Z}^{n}\bigr)
\;\ge\;\varrho^{*}_{\mathsf{c}}\,,
\end{equation*}
and as a result we have that
$~\displaystyle
\liminf_{n\to\infty}\;\Hat{V}^{n}_{\mathsf{c}}(\Hat{X}^{n}(0))
\,\ge\, \varrho_{\mathsf{c}}^{*}\,.$

\smallskip
\item[\upshape{(}iii\upshape{)}]
There exists a positive constant $\Hat{C}$, such that if a sequence
$\{Z^{n},\;n\in\NN\}\subset\boldsymbol\fZ$ satisfies
\begin{equation}\label{ET5.1A}
\babss{ \frac{
J_{\mathsf{c},1}\bigl(\Hat{X}^{n}(0), Z^{n}\bigr)}
{ J_{\mathsf{c},2}\bigl(\Hat{X}^{n}(0), Z^{n}\bigr)}  - \uptheta } \;\le\;
\epsilon\,
\end{equation}
for some $\epsilon\in(0,\theta)$, and all sufficiently large $n\in\NN$, then
\begin{equation}\label{ET5.1B}
\liminf_{n\to\infty}\;J_{\mathsf{o}}(\Hat{X}^n(0),Z^n) \;\ge\;
\varrho^{*}_{\mathsf{f}} - \Hat{C}\epsilon\,.
\end{equation}
\end{itemize}
\end{theorem}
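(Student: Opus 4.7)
The overall plan is to reduce each of the three lower-bound inequalities to a statement about weak limits of mean empirical measures, and then invoke the diffusion-level characterizations of Theorems \ref{T4.2}--\ref{T4.3}. Throughout I would assume the respective $\liminf$ is finite (otherwise the conclusion is vacuous) and extract a subsequence along which it is attained; in what follows this subsequence is relabeled $\{n\}$.

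For the setup common to all three parts, note that by \eqref{Erhat} the running cost dominates a first-order moment of $\Hat Q^n$ and, where relevant, of $\Hat Y^n$. The finiteness of $\liminf$ therefore gives a uniform bound on the long-run average first moment of $\Hat Q^n$ and $\Hat Y^n$, and Lemma \ref{L8.1} promotes this to a bound on the first moment of $\Hat X^n$. Lemma \ref{L7.1} then applies along a further subsequence and yields weak convergence of the mean empirical measures of the diffusion-scaled state and control to some ergodic occupation measure $\uppi\in\eom$ for the limiting diffusion. The inclusion $\{Z^n\}\subset\boldsymbol\fZ$ is essential here, as the EJWC condition is what lets the limit be identified with an element of $\eom$ governed by the drift in \eqref{diff-drift}. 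Parts (i) and (ii) then follow quickly: for (i), Fatou's lemma and the definition $\varrho^*=\inf_{\uppi\in\eom}\uppi(r)$ give $\liminf_n J(\Hat X^n(0),Z^n)\ge\uppi(r)\ge\varrho^*$; for (ii), the same lower-semicontinuity argument shows that $\uppi(r_j)\le\liminf_n J_{\mathsf c,j}\le\updelta_j$, so $\uppi\in\sH(\updelta)$ and Theorem \ref{T4.2}(a) yields $\uppi(r_{\mathsf o})\ge\varrho^*_{\mathsf c}$.

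Part (iii) is the delicate case. After passing to a subsequence on which both $J_{\mathsf c,1}$ and $J_{\mathsf c,2}$ converge, the limit $\uppi$ satisfies $\uppi(r_1)=\uptheta'\,\uppi(r_2)$ for some $\uptheta'\in[\uptheta-\epsilon,\uptheta+\epsilon]$; generically $\uppi\notin\sH_{\mathsf f}(\uptheta)$ and Theorem \ref{T4.3} does not directly deliver the bound. To absorb the mismatch I would use the Lagrangian formulation from Theorem \ref{T4.3}: if $\uplambda^*\in\RR$ is an optimal multiplier at $\uptheta$, then for any $\uppi\in\eom$,
\begin{equation*}
\uppi(r_{\mathsf o}) \;=\; \uppi(h_{\uptheta,\uplambda^*})
\,-\,\uplambda^*\bigl(\uppi(r_1)-\uptheta\,\uppi(r_2)\bigr)
\;\ge\; \varrho^*_{\mathsf f} \,-\,\abs{\uplambda^*}\cdot\epsilon\cdot\uppi(r_2)\,.
\end{equation*}
The assumption $\Tilde m<m$ in (P3$'$) now supplies the needed a priori bound on $\uppi(r_2)$: by Young's inequality, $r_2\le C_0+C_1\,r_{\mathsf o}^{\Tilde m/m}$, so either $\uppi(r_{\mathsf o})\ge\varrho^*_{\mathsf f}$ and \eqref{ET5.1B} holds trivially, or $\uppi(r_{\mathsf o})<\varrho^*_{\mathsf f}$ and $\uppi(r_2)\le C_0+C_1(\varrho^*_{\mathsf f})^{\Tilde m/m}=:C$. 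Setting $\Hat C\df\abs{\uplambda^*}\cdot C$ yields \eqref{ET5.1B}.

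The main technical obstacle, present in all three parts but most acute in (iii), is the passage from weak convergence of mean empirical measures to actual convergence (rather than mere lower-semicontinuity) of the integrated functionals $J_{\mathsf c,j}$, so that the ratio hypothesis \eqref{ET5.1A} can be invoked meaningfully at the limit. This requires uniform integrability of $r_1$ and $r_2$ under the diffusion-scaled process, which I expect can be extracted from the higher moment bounds supplied by Lemma \ref{L8.1} together with the strict inequality $\Tilde m<m$ separating the idleness exponent from the queue exponent. A secondary point requiring care is that the multiplier $\uplambda^*$ furnished by Theorem \ref{T4.3} depends only on the problem data and not on the particular subsequence, so that $\Hat C$ is a genuine constant independent of $\{Z^n\}$ and $\epsilon$.
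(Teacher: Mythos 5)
Your overall strategy matches the paper's: under the assumption that the relevant $\liminf$ is finite, extract a weakly convergent subsequence of mean empirical measures via Lemma~\ref{L8.1} (for the moment bound on $\Hat X^n$) and Lemma~\ref{L7.1} (for identification of the limit as some $\Hat\uppi\in\eom$), and then read off the lower bounds. Parts (i) and (ii) are correct and essentially identical to the paper's argument, including the role of $\{Z^n\}\subset\boldsymbol\fZ$ in making the identification with $\eom$ possible. For part (iii), your direct Lagrangian computation
$\uppi(r_{\mathsf o})=\uppi(h_{\uptheta,\uplambda^*})-\uplambda^*\bigl(\uppi(r_1)-\uptheta\,\uppi(r_2)\bigr)\ge\varrho^*_{\mathsf f}-\abs{\uplambda^*}\,\epsilon\,\uppi(r_2)$
is a valid and slightly cleaner substitute for the paper's invocation of Luenberger's sensitivity theorem through the intermediary value $\Tilde\varrho=\inf_{\uppi\in\sH_{\mathsf f}(\Tilde\uptheta(\epsilon))}\uppi(r_{\mathsf o})$; both routes deliver the same bound $\Hat\uppi(r_{\mathsf o})\ge\varrho^*_{\mathsf f}-\epsilon\abs{\uplambda^*}\Hat\uppi(r_2)$.

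There is, however, a genuine gap in the step that bounds $\uppi(r_2)$. You assert the pointwise inequality $r_2\le C_0+C_1\,r_{\mathsf o}^{\Tilde m/m}$ ``by Young's inequality,'' and this is false. Recall $r_{\mathsf o}(x,u)=[(e\cdot x)^+]^m\sum_i\xi_i(u_i^c)^m$ while $r_2(x,u)=[(e\cdot x)^-u_2^s]^{\Tilde m}$; on the half-space $\{e\cdot x<0\}$ one has $r_{\mathsf o}\equiv 0$ while $r_2$ is unbounded, so no pointwise (or purely algebraic) comparison is possible. Young's inequality lets you trade exponents, not signs. What actually yields the bound is the a priori estimate \eqref{E-apriori} --- a Lyapunov/drift estimate for the controlled diffusion, which bounds moments of $\abs{X_t}$ (and hence of $(e\cdot X_t)^-$) in terms of moments of $(e\cdot X_t)^+$ uniformly over admissible controls. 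Applied at the level of ergodic occupation measures and combined with Jensen's inequality in the concave power $\Tilde m/m$, this gives $\Hat\uppi(r_2)\le\Hat\kappa\bigl(1+\Hat\uppi(r_{\mathsf o})^{\Tilde m/m}\bigr)$. Once that replacement is made, your dichotomy $\uppi(r_{\mathsf o})\gtrless\varrho^*_{\mathsf f}$ and the definition of $\Hat C$ go through. You clearly have the right intuition --- you mention the moment bounds of Lemma~\ref{L8.1} and $\Tilde m<m$ in the closing paragraph --- but as written the bound on $\uppi(r_2)$ is derived from a false premise; the dynamical estimate \eqref{E-apriori}, not Young's inequality, is the load-bearing ingredient. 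The same estimate, together with $\Tilde m<m$, is also what delivers the uniform integrability you flag as needed to pass the ratio hypothesis \eqref{ET5.1A} through the limit.
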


The proof of the theorem that follows relies on the fact that
$r$ and also $r_{j}$ for $i=0,1,2$, are convex functions of $u$.

\begin{theorem} \label{T-ubound} $($upper bounds$)$
Let $\Hat{X}^{n}(0) \Rightarrow x \in \RR^2$ as $n\to \infty$.
The following hold:
\begin{itemize}
\item[\upshape{(}i\upshape{)}]
$\displaystyle\limsup_{n\to\infty}\;\Hat{V}^{n}(\Hat{X}^{n}(0))
\;\le\; \varrho^*$\,.

\smallskip
\item[\upshape{(}ii\upshape{)}]
For any $\epsilon>0$, there exists a sequence
$\{Z^{n},\;n\in\NN\}\subset\boldsymbol\fZ$ such that the constraint
in \eqref{constraint-ds} is feasible for all sufficiently large $n$,
and
\begin{equation*}
\limsup_{n\to\infty}\; J_{\mathsf{o}}\bigl(\Hat{X}^{n}(0),\Hat{Z}^{n}\bigr)
\;\le\;\varrho^{*}_{\mathsf{c}}+\epsilon\,.
\end{equation*}
Consequently, we have that
$~\displaystyle
\limsup_{n\to\infty}\;\Hat{V}^{n}_{\mathsf{c}}(\Hat{X}^{n}(0))
\;\le\; \varrho_{\mathsf{c}}^{*}$\,.

\smallskip
\item[\upshape{(}iii\upshape{)}]
For any $\epsilon>0$, there exists a sequence
$\{Z^{n},\;n\in\NN\}\subset\boldsymbol\fZ$ such that
\eqref{ET5.1A} holds for all sufficiently large $n\in\NN$, and
\begin{equation*}
\limsup_{n\to\infty}\;J_{\mathsf{o}}(\Hat{X}^n(0),Z^n) \;\le\;
\varrho^{*}_{\mathsf{f}} + \epsilon\,.
\end{equation*}
\end{itemize}
\end{theorem}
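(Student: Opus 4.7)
The plan is to realize the strategy outlined at the end of the introduction: for each $\epsilon>0$, produce an $\epsilon$-optimal precise continuous Markov control $v_\epsilon$ for the limiting diffusion, lift it to a scheduling policy in $\boldsymbol\fZ$ by concatenating it with the SDP policy $\Check{z}^n$ outside a region where JWC holds, and pass to the limit using the convergence of mean empirical measures (Lemma~\ref{L7.2}) together with the uniform integrability supplied by geometric ergodicity of the concatenated policy (Lemma~\ref{L9.1}).

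For part~(i), fix $\epsilon>0$ and invoke the spatial truncation scheme of \citet{ABP14,AP15} applied to~(P1$'$) to obtain a continuous $v_\epsilon\colon\RR^2\to\Act$ which is constant outside a bounded set and whose associated ergodic occupation measure $\uppi_\epsilon\in\eom$ satisfies $\uppi_\epsilon(r)\le\varrho^*+\epsilon$. Using the identification between $(u^c,u^s)\in\Act$ and jointly work-conserving actions in $\bcZn$ presented in Section~\ref{S6} (cf.\ Lemma~\ref{L-JWC}), define a scheduling rule by the Markov policy induced by $v_\epsilon$ on $n(\Breve B+x^*)$ and by $\Check{z}^n$ on the complement; the resulting $\{Z^n\}$ lies in $\boldsymbol\fZ$ by construction. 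Lemma~\ref{L9.1} then yields the Foster--Lyapunov bound $\sup_{n\ge n_0}\limsup_{T\to\infty}T^{-1}\Exp^{Z^n}\bigl[\int_0^T\abs{\Hat X^n(s)}^{k}\,\D s\bigr]<\infty$ for every $k\ge1$, whence the $m$-th moments of $\abs{\Hat X^n}$, $\abs{\Hat Q^n}$ and $\abs{\Hat Y^n}$ are uniformly integrable under the empirical occupation measures. By Lemma~\ref{L7.2} the mean empirical measures of $(\Hat X^n(s),U^n(s))$ converge along subsequences to $\uppi_\epsilon$; combining with convexity of $r$ in $u\in\Act$ and the uniform integrability just obtained, Jensen's inequality gives $\limsup_{n\to\infty}J(\Hat X^n(0),Z^n)\le\uppi_\epsilon(r)\le\varrho^*+\epsilon$, and letting $\epsilon\downarrow0$ completes~(i).

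For part~(ii), use the continuity in Theorem~\ref{T4.2}(e) to select $v_\epsilon$ whose occupation measure $\uppi_\epsilon$ satisfies the constraints \emph{strictly}, i.e.\ $\uppi_\epsilon(r_j)<\updelta_j$ for $j=1,2$, while $\uppi_\epsilon(r_{\mathsf o})\le\varrho^*_{\mathsf c}+\epsilon$. Construct $Z^n$ by the same concatenation, and apply Lemma~\ref{L7.2} to the continuous, polynomially growing integrands $r_{\mathsf o},r_1,r_2$; one obtains $J_{\mathsf c,j}(\Hat X^n(0),Z^n)\to\uppi_\epsilon(r_j)<\updelta_j$, so \eqref{constraint-ds} is satisfied for all large $n$, and $J_{\mathsf o}(\Hat X^n(0),Z^n)\to\uppi_\epsilon(r_{\mathsf o})\le\varrho^*_{\mathsf c}+\epsilon$. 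Part~(iii) is analogous but uses Theorem~\ref{T4.3}: pick $\uppi_\epsilon\in\sH_{\mathsf f}(\uptheta)$ with $\uppi_\epsilon(r_{\mathsf o})\le\varrho^*_{\mathsf f}+\epsilon/2$. Then $J_{\mathsf c,j}(\Hat X^n(0),Z^n)\to\uppi_\epsilon(r_j)$ for $j=1,2$; since $\uppi_\epsilon(r_1)=\uptheta\,\uppi_\epsilon(r_2)$ with both strictly positive (otherwise the problem is trivial), the ratio converges to $\uptheta$ and hence \eqref{ET5.1A} holds for every prescribed $\epsilon$ once $n$ is large enough, while $J_{\mathsf o}(\Hat X^n(0),Z^n)\le\varrho^*_{\mathsf f}+\epsilon$ in the limit.

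The main obstacle in this program is Lemma~\ref{L9.1}: one must patch the Foster--Lyapunov drift \eqref{Lya-GE} supplied by $\Check z^n$ in the exterior with the drift inherited from $v_\epsilon$ in the JWC region, and control the cross-terms arising at the boundary where the scheduling rule switches, showing that $\Lyap_{k,\beta}$ still serves as a Lyapunov function for the hybrid chain uniformly in $n$. A secondary but essential technical point is the convexity of $r,r_j$ in $u$, noted just before the statement of the theorem: this is precisely what allows Jensen's inequality to justify the passage from the discrete, $n$-dependent actions chosen inside $\bcZn$ to the continuous action $v_\epsilon(x)\in\Act$ when identifying the limit of the mean empirical measures, and hence to match the ergodic cost on the scheduling side with the value on the diffusion side.
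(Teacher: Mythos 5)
Your proposal follows essentially the same route as the paper: choose an $\epsilon$-optimal continuous precise control $v_\epsilon$ for the corresponding diffusion problem (via the spatial truncation results of \citet{ABP14,AP15}), concatenate the induced Markov policy $z^n[v_\epsilon]$ inside the JWC region with the SDP policy $\Check z^n$ outside, use Proposition~\ref{P3.1} and Lemma~\ref{L9.1} for a uniform Foster--Lyapunov bound hence uniform integrability, and conclude via Lemma~\ref{L7.2}. Two remarks on details you glossed over.

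First, in part~(iii) you dismiss the positivity of $\uppi_{v_\epsilon}(r_2)$ as ``otherwise the problem is trivial.'' That is not an argument: nothing a priori rules out $\uppi_{v_\epsilon}(r_1)=\uppi_{v_\epsilon}(r_2)=0$ satisfying the fairness constraint, and in that case the ratio $J_{\mathsf{c},1}/J_{\mathsf{c},2}$ need not converge to $\uptheta$, so \eqref{ET5.1A} is not established. The paper instead observes that $(e\cdot x)^-$ is strictly positive on an open set and invokes the Harnack inequality for the density of the invariant probability measure of the nondegenerate diffusion, together with tightness of $\{\uppi_{v_\epsilon}\}$, to get $\inf_{\epsilon\in(0,1)}\uppi_{v_\epsilon}(r_2)>0$; you should supply this step rather than wave it away.

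Second, your discussion of ``cross-terms arising at the boundary where the scheduling rule switches'' mislocates the difficulty. The generator \eqref{E-gen} of the Markov chain is local in the state: at each $x$ it depends only on the action $z^n(x)$ chosen at that $x$, not on what policy applies at neighboring states. Consequently the Foster--Lyapunov inequalities from Proposition~\ref{P3.1} (for $\Check z^n$, on $\sS^n$) and Lemma~\ref{L9.1} (for $z^n[v_\epsilon]$, on $\Breve\sS^n$) patch pointwise with no boundary terms; the genuine content of Lemma~\ref{L9.1} is verifying the drift estimate for $z^n[v_\epsilon]$ on the JWC region, not reconciling it with the exterior policy. Finally, the convexity/Jensen remark you invoke is useful but not the mechanism the paper employs; it relies instead on the continuity of $v_\epsilon$ together with uniform convergence of the discretized controls $u^n[v_\epsilon]$ to $v_\epsilon$ on compacts (see \eqref{EL7.2A}) when identifying the limit in Lemma~\ref{L7.2}.
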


\smallskip
%%%%%%%%%%%%%%%%%%%%%%%%%%%%%%%%%%%%%%%%%%%%%%%%%%%%%%%%%%%%%%%%%%%%%
\section{System dynamics and an equivalent control parameterization} \label{S6}

\subsection{Description of the system dynamics}

The processes $X^{n}$ can be represented via rate-$1$ Poisson processes:
for each $i=1,2$ and $t\ge 0$, we have
\begin{equation} \label{Xrep}
\begin{split}
X^{n}_1(t)&\;=\;X^{n}_1(0) + A^{n}_1(\lambda^{n}_1 t) - \sum_{j=1,2}
S^{n}_{1j} \left( \mu_{1j}^{n}\int_0^t Z_{1j}^{n}(s) \D{s} \right)
 - R_1^{n} \left(\gamma_1^{n} \int_0^t Q^{n}_1(s) \D{s} \right)\,, \\
X^{n}_2(t)&\;=\;X^{n}_2(0) + A^{n}_2(\lambda^{n}_2 t) - 
S^{n}_{22} \left( \mu_{22}^{n}\int_0^t Z_{22}^{n}(s) \D{s} \right)
- R_2^{n} \left(\gamma_2^{n} \int_0^t Q^{n}_2(s) \D{s} \right)\,.
\end{split}
\end{equation}
Recall that the processes $A^{n}_i$, $S^{n}_{ij}$ and $R^{n}_i$ are all rate-1 Poisson
processes and mutually independent, and independent of the initial
quantities $X^{n}_i(0)$.

By \eqref{DiffDef} and \eqref{Xrep}, we can write $\Hat{X}^{n}_1(t)$ and
$\Hat{X}^{n}_2(t)$  as
\begin{multline} \label{hatXn-1}
\Hat{X}^{n}_1(t) \;=\; \Hat{X}^{n}_1(0) + \ell_1^{n} t
-  \mu_{11}^{n} \int_0^t \Hat{Z}^{n}_{11}(s) \D{s}
-  \mu_{12}^{n} \int_0^t \Hat{Z}^{n}_{12}(s) \D{s}
- \gamma^{n}_1 \int_0^t \Hat{Q}^{n}_1(s) \D{s}  \\[5pt]
+ \Hat{M}^{n}_{A, 1} (t) - \Hat{M}^{n}_{S, 11}(t)  - \Hat{M}^{n}_{S, 12}(t)
- \Hat{M}^{n}_{R, 1}(t)\,, 
\end{multline}
\begin{equation} \label{hatXn-2}
\Hat{X}^{n}_2(t) \;=\; \Hat{X}^{n}_2(0) + \ell_2^{n} t
- \mu_{22}^{n}  \int_0^t \Hat{Z}^{n}_{22}(s) \D{s}
- \gamma^{n}_2 \int_0^t \Hat{Q}^{n}_2(s) \D{s}
+ \Hat{M}^{n}_{A, 2} (t) - \Hat{M}^{n}_{S, 22}(t) - \Hat{M}^{n}_{R, 2}(t)\,, 
\end{equation}
where for $i=1,2,$ and $j=1,2,$ 
\begin{align*}
\Hat{M}^{n}_{A, i}(t) &\;\df\;  \frac{1}{\sqrt{n}}(A^{n}_i(\lambda_i^{n} t)
- \lambda_i^{n} t), \\[5pt]
 \Hat{M}^{n}_{S, ij}(t) &\;\df\; \frac{1}{\sqrt{n}}\left( S^{n}_{ij}
 \left( \mu_{ij}^{n}\int_0^t Z_{ij}^{n}(s) \D{s} \right)
 - \mu_{ij}^{n}\int_0^t Z_{ij}^{n}(s) \D{s}\right) \,,\\[5pt]
\Hat{M}^{n}_{R, i}(t) &\;\df\;\frac{1}{\sqrt{n}}
\left(R_i^{n} \left(\gamma_i^{n} \int_0^t Q^{n}_i(s) \D{s} \right)
-\gamma_i^{n} \int_0^t Q^{n}_i(s) \D{s} \right)\,,
\end{align*}
and $\ell^{n} = (\ell^{n}_1, \ell^{n}_2)\transp$ is defined by
\begin{equation*}
\ell^{n}_1\;\df\; \frac{1}{\sqrt{n}} \left(\lambda^{n}_1
-   \mu_{11}^{n} z^*_{11} n -  \mu_{12}^{n} z^*_{12} n  \right) \,,
\quad \ell^{n}_2\;\df\; \frac{1}{\sqrt{n}} \left(\lambda^{n}_2
-  \mu_{22}^{n} z^*_{22} n \right) \,,
\end{equation*}
with $z_{ij}^*$ as in \eqref{Ez*}.
It is easy to see that under the assumptions on the parameters
in Assumption~\ref{as-para}, $\ell^{n} \to \ell$ as $n \to\infty$,
where $\ell$ is defined in \eqref{ell-limit}. 
The processes $\Hat{M}^{n}_{A, i}\df\{\Hat{M}^{n}_{A, i}(t): t\ge 0\}$,
$\Hat{M}^{n}_{S, ij}\df \{ \Hat{M}^{n}_{S, ij}(t): t\ge 0\}$, and
$\Hat{M}^{n}_{R, i}\df\{\Hat{M}^{n}_{R, i}(t): t\ge 0\}$ are 
square integrable martingales w.r.t. the filtration $\mathbf{F}^{n}$
with quadratic variations
\begin{equation*}
\langle \Hat{M}^{n}_{A, i} \rangle(t) \;\df\; \frac{\lambda_i^{n}}{n} t\,, \quad 
\langle \Hat{M}^{n}_{S, ij} \rangle (t) \;\df\; \frac{\mu_{ij}^{n}}{n}
\int_0^t Z_{ij}^{n}(s)\D{s}\,,\quad 
\langle \Hat{M}^{n}_{R, i} \rangle (t)\;\df\;\frac{\gamma_i^{n}}{n}
\int_0^t Q^{n}_i(s)\D{s} \,. 
\end{equation*}

By \eqref{Exi*}--\eqref{Ez*}, \eqref{baleq}, and \eqref{DiffDef},
we obtain the balance equations 
\begin{equation}\label{baleq-hat}
\begin{split}
\begin{aligned}
\Hat{X}^{n}_1(t) &\;=\; \Hat{Q}_1^{n}(t) +  \Hat{Z}^{n}_{11}(t)
+  \Hat{Z}^{n}_{12}(t)\,, \\[5pt]
\Hat{X}^{n}_2(t) &\;=\; \Hat{Q}_2^{n}(t) +  \Hat{Z}^{n}_{22}(t) \,,
\end{aligned}
\qquad
\begin{aligned}
\Hat{Y}^{n}_1(t) + \Hat{Z}^{n}_{11}(t) &\;=\; 0 \,, \\[5pt] 
\Hat{Y}^{n}_2(t) + \Hat{Z}^{n}_{12}(t) + \Hat{Z}^{n}_{22}(t) &\;=\; 0\,,
\end{aligned}
\end{split}
\end{equation}
for all $t\ge 0$.
The work conservation and JWC conditions
translate to the following:
\begin{equation*}
\Hat{Q}_1^{n}(t) \wedge \Hat{Y}^{n}_j(t)\;=\;0\,
\qquad \forall j = 1,2, \qandq \Hat{Q}_2^{n}(t) \wedge
\Hat{Y}^{n}_2(t)\;=\;0\,,  \quad\forall\, t \ge 0\,,
\end{equation*}
and 
$e\cdot \Hat{Q}^{n}(t) \wedge e \cdot \Hat{Y}^{n}(t)\;=\;0$, $t \ge 0$, 
respectively.

\subsection{Control parameterization} \label{sec-controlparam}
By \eqref{baleq-hat},
we obtain
\begin{equation} \label{E-sumbal}
e\cdot \Hat{X}^{n}(t) \;=\; e\cdot \Hat{Q}^{n}(t) - e \cdot \Hat{Y}^{n}(t)\,,
\end{equation}
and therefore the JWC condition is equivalent to
\begin{equation}\label{E-joint}
e\cdot \Hat{Q}^{n}(t) \;=\; \bigl(e\cdot \Hat{X}^{n}(t)\bigr)^{+}\,,
\qquad e \cdot \Hat{Y}^{n}(t) \;=\; \bigl(e\cdot \Hat{X}^{n}(t)\bigr)^{-}\,.
\end{equation}

\begin{definition}\label{D6.1}
We define the processes
$U^{c,n}\df (U^{c,n}_1, U^{c,n}_2)\transp$
and $U^{s,n}\df (U^{s,n}_1,U^{s,n}_2)\transp$, $t\ge0$, by
\begin{equation} \label{lb-Ucn}
U^{c,n}(t) \;\df\; \begin{cases}
\frac{\Hat{Q}^{n}(t)}{e\cdot \Hat{Q}^{n}(t)}
& \text{if~} e\cdot \Hat{Q}^{n}(t)>0\,,\\
e_1= (1,0) &\text{otherwise,}\end{cases}
\end{equation} 
and 
\begin{equation}\label{lb-Usn}
U^{s,n}(t) \;\df\; \begin{cases}
\frac{\Hat{Y}^{n}(t)}{e\cdot \Hat{Y}^{n}(t)}
& \text{if~} e\cdot \Hat{Y}^{n}(t)>0\,,\\
e_2 = (0,1) &\text{otherwise,}\end{cases}
\end{equation}
and let $U^{n}\df (U^{c,n}, U^{s,n})$.
\end{definition}

The process $U^{c,n}_i(t)$ represents the proportion of the total queue length
in the network at queue $i$ at time $t$, while $U^{s,n}_j(t)$ represents the
proportion of the total idle servers in the network at station $j$ at time $t$.
The control $U^{c,n}(t) = e_1 = (1,0)$ means that server pool~$2$
gives strict static priority to class-$2$ jobs, while the control
$U^{s,n}(t) = e_2=(0,1)$ means that class-$1$ jobs strictly prefer
service in pool~$1$.

Given $Z^{n} \in \mathfrak{Z}^{n}$, the process $U^{n}$ is uniquely determined
via \eqref{baleq-hat} and \eqref{lb-Ucn}--\eqref{lb-Usn}
and lives in the set $\Act$ in \eqref{E-Act}.
It follows by \eqref{baleq-hat} and \eqref{E-joint}
that, under JWC, we have that for each $t\ge 0$, 
\begin{equation}\label{lb-joint}
\Hat{Q}^{n}(t)\;=\; \bigl(e\cdot \Hat{X}^{n}(t)\bigr)^{+}\,U^{c,n}(t)\,,
\qquad
\Hat{Y}^{n}(t)\;=\;\bigl(e\cdot \Hat{X}^{n}(t)\bigr)^{-}\,U^{s,n}(t)\,.
\end{equation}
Also, by \eqref{lb-joint}, under the JWC condition, we have
\begin{equation}\label{D6.1D}
\Hat{Z}^{n} \;=\;
\begin{bmatrix}
- (e\cdot\Hat{X}^n)^{-} U^{s,n} _1 \quad &
\Hat{X}_1^n -  (e\cdot\Hat{X}^n)^{+} U^{c,n}_1 + (e\cdot\Hat{X}^n)^{-} U^{s,n}_1
\\[5pt]
0 \quad & \Hat{X}_2^n -  (e\cdot\Hat{X}^n)^{+} U^{c,n}_2 
\end{bmatrix} \,.
\end{equation}

%%%%%%%%%%%%%%%%%%%%%%%%%%%%%%%%%%%%%%%%%%%%%%%%%%%%%%%%%%%%%%%%%%%%%
\section{Convergence of mean empirical measures}\label{S7}

For the process $X^n$ under a scheduling policy
$Z^n$, and with $U^n$ as in Definition~\ref{D6.1},
we define the mean empirical measures 
\begin{equation} \label{E-emp}
\Phi^{Z^n}_{T}(A\times B)\;\df\;
\frac{1}{T}\,\Exp^{Z^n}\biggl[\int_{0}^{T}\Ind_{A\times B}
\bigl(\Hat{X}^{n}(t),U^{n}(t)\bigr)\,\D{t}\biggr]
\end{equation}
for Borel sets $A\subset\RR^{2}$ and $B\subset \Act$.
Recall Definition~\ref{D2.1}.
The lemma which follows provides a sufficient condition
under which the mean empirical measures $\Phi^{Z^n}_{T}$ are tight and converge
to an ergodic occupation measure corresponding to some stationary stable Markov
control for the limiting diffusion control problem.
The condition simply requires a finite long-run average first-order moment of
the diffusion-scaled state process under an EJWC scheduling policy.
This lemma is used in Section~\ref{S8}
to prove the lower bounds in Theorem~\ref{T-lbound}. 

\begin{lemma}\label{L7.1}
Suppose that under some sequence
$\{Z^{n},\;n\in\NN\}\subset\boldsymbol\fZ$ we have
\begin{equation}\label{E-moment}
\sup_{n}\;\limsup_{T\to\infty}\;\frac{1}{T}\,
\Exp^{Z^{n}}\biggl[\int_{0}^{T}\babs{\Hat{X}^{n}(s)}\,\D{s}\biggr]\;<\,\infty\,.
\end{equation}
Then any limit point $\uppi\in\cP(\RR^{2}\times\Act)$ of $\Phi^{Z^n}_{T}$,
defined in \eqref{E-emp},
as $(n,T)\to\infty$ satisfies
$\uppi\in\eom$.
\end{lemma}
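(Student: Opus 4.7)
The plan is to combine tightness of $\{\Phi^{Z^n}_T\}$, which follows from \eqref{E-moment}, with a Dynkin/generator argument that shows any subsequential limit annihilates $\Lg^u f$ for every $f\in\Cc^{\infty}_c(\RR^2)$, thereby placing $\uppi$ in $\eom$.

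\textbf{Step 1 (Tightness).} Markov's inequality applied to \eqref{E-moment} yields
\begin{equation*}
\sup_{n\in\NN}\;\limsup_{T\to\infty}\;
\Phi^{Z^n}_T\bigl(\{|x|>R\}\times\Act\bigr)\;\le\;\frac{C}{R}\,,
\end{equation*}
so that the $\RR^2$-marginals are uniformly tight. Since $\Act$ is compact, the family $\{\Phi^{Z^n}_T\}$ is tight in $\cP(\RR^2\times\Act)$ and has limit points along subsequences $(n_k,T_k)\to\infty$.

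\textbf{Step 2 (Dynkin identity).} Fix $f\in\Cc^{\infty}_c(\RR^2)$. Dynkin's formula for the pure jump process $\Hat{X}^n$ gives
\begin{equation*}
\frac{1}{T}\Exp^{Z^n}\bigl[f(\Hat{X}^n(T))-f(\Hat{X}^n(0))\bigr]
\;=\;\frac{1}{T}\Exp^{Z^n}\!\biggl[\int_0^T\widehat\cL_n^{Z^n}f(\Hat{X}^n(s))\,\D s\biggr]\,,
\end{equation*}
whose left-hand side tends to $0$ as $T\to\infty$ since $f$ is bounded. The task is therefore to identify the right-hand side in the limit with $\uppi(\Lg^u f)$.

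\textbf{Step 3 (Generator expansion under EJWC).} The heart of the proof is to write
\begin{equation*}
\widehat\cL_n^{Z^n}f(\Hat{X}^n(s))\;=\;\Lg^{U^n(s)}f(\Hat{X}^n(s)) + E^n(s)\,,
\end{equation*}
with $U^n$ as in Definition~\ref{D6.1} and a remainder $E^n$ vanishing in the relevant sense. Because $Z^n\in\boldsymbol\fZ$ and $f$ is compactly supported, for any $R$ large enough to contain $\mathrm{supp}(f)$ there is an $n_0=n_0(R)$ so that whenever $n\ge n_0$ and $|\Hat{X}^n(s)|\le R$, the EJWC identity \eqref{D6.1D} applies and $\Hat{Z}^n$ is an explicit function of $(\Hat{X}^n,U^n)$. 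Substituting into \eqref{E-gen} and performing a second-order Taylor expansion of $f$ (the jumps are of size $n^{-1/2}$ and the rates of order $n$), the $O(\sqrt n)$ drift contributions cancel exactly by the critical loading condition \eqref{HWpara4} together with the definitions \eqref{Ez*} and \eqref{ell-limit} of $z^*$ and $\ell$; what remains at first order is the drift $b(\Hat{x},u)\cdot\nabla f(\Hat{x})$ of \eqref{diff-drift}, and at second order $\tfrac12\trace\bigl(a\,\nabla^2 f(\Hat{x})\bigr)$ with $a=\Sigma\Sigma\transp$, i.e.\ precisely $\Lg^{U^n(s)}f(\Hat{X}^n(s))$, modulo an error $O(n^{-1/2})$ on $\{|\Hat{X}^n|\le R\}$. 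Outside $B_R$, for $n$ large the compact support of $f$ forces $\widehat\cL_n^{Z^n}f(\Hat{X}^n(s))=0$ and $\Lg^{U^n(s)}f(\Hat{X}^n(s))=0$, so there is no tail contribution.

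\textbf{Step 4 (Passage to the limit).} Combining Steps 2 and 3,
\begin{equation*}
\Phi^{Z^n}_T(\Lg^u f)\;=\;\frac{1}{T}\Exp^{Z^n}\bigl[f(\Hat{X}^n(T))-f(\Hat{X}^n(0))\bigr]
-\Phi^{Z^n}_T(E^n)\;\xrightarrow[(n,T)\to\infty]{}\;0\,.
\end{equation*}
Since $(\Hat{x},u)\mapsto\Lg^u f(\Hat{x})$ lies in $\Cc_b(\RR^2\times\Act)$, along any subsequence on which $\Phi^{Z^n}_T\Rightarrow\uppi$ the left-hand side converges to $\uppi(\Lg^u f)$, so $\uppi(\Lg^u f)=0$ for every $f\in\Cc^{\infty}_c(\RR^2)$, i.e.\ $\uppi\in\eom$.

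\textbf{Main obstacle.} The delicate step is the generator cancellation in Step 3: one must verify algebraically that, after inserting \eqref{D6.1D} into \eqref{E-gen}, the Halfin--Whitt centering removes the $O(\sqrt n)$ drift and produces exactly $\Lg^u f$ in the limit, with a remainder that is uniformly $o(1)$ on any fixed ball. The moment bound \eqref{E-moment} is used only for tightness in Step 1 and to justify the uniform tightness of $\Phi^{Z^n}_T$, which together with the bounded support of $f$ eliminates any tail difficulty.
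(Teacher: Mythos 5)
Your proof is correct and mirrors the paper's argument in its essential structure: tightness of $\{\Phi^{Z^n}_T\}$ from \eqref{E-moment}, a Dynkin/It\^{o} identity, a generator approximation valid on the EJWC domain where $\Hat Z^n$ is determined by $(\Hat X^n,U^n)$ via \eqref{D6.1D}, and passage to the limit by weak convergence (for which $\Lg^u f\in\Cc_b(\RR^2\times\Act)$ by the compact support of $f$). The one place you diverge is your ``main obstacle'' remark that \eqref{E-moment} is used only for tightness. In the paper's version the moment bound is also invoked to control the third-order Taylor remainder $\mathscr{D} f$: the bound $\frac{1}{T}\,\Exp\bigl[\sum_{s\le T}\abs{\mathscr{D} f(\Hat X^n,s)}\bigr]\le \frac{\kappa'\norm{f}_{\Cc^3}}{\sqrt n}\cdot\frac{1}{T}\,\Exp\bigl[\int_0^T(\Bar{\mathcal X}^n_1+\Bar{\mathcal X}^n_2)\,\D s\bigr]$ requires \eqref{E-moment} because the abandonment contributions to $\Bar{\mathcal X}^n_i$ grow like $n^{-1/2}\abs{\Hat X^n_i}$ and so the time average is not \emph{a priori} bounded. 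Your localization of the remainder to a neighborhood of $\mathrm{supp}(f)$ (where all jump rates are $O(n)$, hence the third-order term is $o(1)$ without any moment estimate) sidesteps this and is a legitimate simplification. One small caveat: under Assumption~\ref{as-para} the convergence $\ell^n\to\ell$ is only $o(1)$, not necessarily $O(n^{-1/2})$, so your claimed $O(n^{-1/2})$ error $E^n$ on compacts is slightly optimistic; uniform $o(1)$ on compacts is what holds and is all that's needed.
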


\begin{proof}
Let $f \in \Cc^{\infty}_c(\RR^2)$, and define
\begin{multline}\label{EL7.1A}
\mathscr{D} f(\Hat{X}^n,s)\;\df\;
\Delta f(\Hat{X}^{n}(s))
- \sum_{i=1}^2 \partial_{i}f(\Hat{X}^{n}(s-)) \Delta \Hat{X}^{n}_i(s)\\
 - \frac{1}{2} \sum_{i, i'=1}^2 \partial_{ii'}f(\Hat{X}^{n}(s-))
\Delta \Hat{X}^{n}_i(s) \Delta \Hat{X}_{i'}^{n}(s)\,.
\end{multline}
By applying It{\^o}'s formula (see, e.g.,
Theorem~26.7 in \citet{kallenberg})
and using the definition of $\Phi^{Z^n}_{T}$ in
\eqref{E-emp} and $\Hat{X}^{n}$ in \eqref{hatXn-1}--\eqref{hatXn-2}, we obtain
\begin{multline}\label{EL7.1B}
\frac{\Exp\bigl[f(\Hat{X}^{n}(T))\bigr] -
 \Exp\bigl[f(\Hat{X}^{n}(0))\bigr]}{T}     
 \;=\; 
\int_{\RR^2 \times \Act} \mathcal{A}^{n}f(\hat{x},u)\,
\Phi^{Z^n}_{T} (\D{\Hat{x}}, \D{u})
+ \frac{1}{T}\;\Exp\Biggl[ \sum_{s\le T}\mathscr{D} f(\Hat{X}^n,s)\Biggr]\,,
\end{multline}
with $\Exp =\Exp^{Z^n}$.
Define
\begin{equation}\label{EL7.1C}
\begin{split}
\cA_{1,1}^{n}(\Hat{x},u) &\;\df\;  - \mu_{12}^{n}
(\Hat{x}_1 - (e\cdot \Hat{x})^+ u_1^c)
+ ( \mu_{11}^{n} - \mu_{12}^{n} ) (e\cdot \Hat{x})^{-} u_1^s
- \gamma^{n}_1 (e\cdot \Hat{x})^{+} u^c_1 + \ell^{n}_1\,,\\[5pt]
\cA_{2,1}^{n}(\Hat{x},u) &\;\df\; -  \mu_{22}^{n}
(\Hat{x}_2 - (e\cdot \Hat{x})^+ u_2^c)
- \gamma^{n}_2 (e\cdot \Hat{x})^{+} u^c_2 + \ell^{n}_2\,,\\[5pt]
\cA_{1,2}^{n}(\Hat{x},u) &\;\df\; \frac{1}{2}\biggl(\frac{\lambda^{n}_{1}}{n}
+ \mu_{11}^{n}z^*_{11} + \mu_{12}^{n}z^*_{12} 
+\frac{1}{\sqrt{n}}\,\mu_{12}^{n} \bigl(\Hat{x}_1 - (e\cdot \Hat{x})^+ u_1^c\bigr)\\
& \mspace{150mu}  + \frac{1}{\sqrt{n}}\,
(\mu_{11}^{n} - \mu_{12}^{n} ) (e\cdot \Hat{x})^{-} u_1^s
+ \frac{1}{\sqrt{n}}\,\gamma^{n}_1 (e\cdot \Hat{x})^{+} u^c_1 \biggr)\,, \\[5pt]
\cA_{2,2}^{n}(\Hat{x},u) &\;\df\; \frac{1}{2}\biggl(\frac{\lambda^{n}_{2}}{n}
+ \mu_{22}^{n}z^*_{22}
+\frac{1}{\sqrt{n}}\,\mu_{22}^{n} \bigl(\Hat{x}_2 - (e\cdot \Hat{x})^+ u_2^c \bigr)
+ \frac{1}{\sqrt{n}}\,\gamma^{n}_2 (e\cdot \Hat{x})^{+} u^c_2 \biggr)\,.
\end{split}
\end{equation}
Since $Z^{n}\in\Breve\fZ^n$,
the operator
$\mathcal{A}^{n}\colon\Cc^{\infty}_c(\sqrt{n}\Breve{B})\to
\Cc^{\infty}_c(\sqrt{n}\Breve{B}\times\Act)$
takes the form
\begin{equation*}
\mathcal{A}^{n}f(\hat{x},u) \;\df\; \sum_{i=1}^2
\Bigl(\cA_{i,1}^{n}(\Hat{x},u)\,\partial_{i}f(\Hat{x})
+  \cA_{i,2}^{n}(\Hat{x},u)\,\partial_{ii}f(\Hat{x}) \Bigr)\,.
\end{equation*}

Let
\begin{equation*}
\norm{f}_{\Cc^3} \df \sup_{x\in\RR^2}
\Bigl( \abs{f(x)} + \sum_{i=1,2} \abs{\partial_i f(x)}
+ \sum_{i,j=1}^{2} \abs{\partial_{ij} f(x)}
+ \sum_{i,j,k=1}^{2} \abs{\partial_{ijk} f(x)}\Bigr)\,.
\end{equation*}
By Taylor's formula, using also the fact that the jump size is 
$\frac{1}{\sqrt{n}}$, we obtain
\begin{align*}
\babs{\mathscr{D} f(\Hat{X}^n,s)}
&\;\le\;\kappa \norm{f}_{\Cc^{3}}
\sum_{i,j,k=1}^{2}
\babs{\Delta\Hat{X}^{n}_{i}(s)}\,\babs{\Delta \Hat{X}^{n}_{j}(s)}
\,\babs{\Delta \Hat{X}^{n}_{k}(s)}\\[5pt]
&\;\le\;\frac{\kappa' \norm{f}_{\Cc^{3}}}{\sqrt{n}}\sum_{i,i'=1}^{2}
\babs{\Delta\Hat{X}^{n}_{i}(s)\Delta \Hat{X}^{n}_{i'}(s)}\,,
\end{align*}
for some constants $\kappa$ and $\kappa'$ that do not depend on $n\in\NN$.
Let 
\begin{equation}\label{BarcXn}
\begin{split} 
\Bar{\mathcal{X}}^n_1(t) &\;\df\; \frac{\lambda^{n}_1}{n} + \frac{1}{n}
\mu^{n}_{11} Z^{n}_{11}(t) + \frac{1}{n}
\mu^{n}_{12} Z^{n}_{12}(t)
+ \frac{1}{n} \gamma^{n}_1 Q^{n}_1(t) \,,\\[5pt]
\Bar{\mathcal{X}}^n_2(t) &\;\df\; \frac{\lambda^{n}_2}{n} + \frac{1}{n}
\mu^{n}_{22} Z^{n}_{22}(t)
+ \frac{1}{n} \gamma^{n}_2 Q^{n}_2(t)
\end{split}
\end{equation}
for $t\ge0$. 
Since independent Poisson processes have no simultaneous jumps w.p.1.,  we have
\begin{equation*}
\frac{1}{T}\;\Exp\biggl[\int_{0}^{T}\sum_{i,i'=1}^{2}
\babs{\Delta\Hat{X}^{n}_{i}(s)\Delta \Hat{X}^{n}_{i'}(s)\,}\D{s}\biggr]
\;\le\;\frac{1}{T}\;\Exp\babss{\int_{0}^{T}
\bigl(\Bar{\mathcal{X}}^n_1(s) + \Bar{\mathcal{X}}^n_2(s) \bigr) \,\D{s}}\,,
\end{equation*}
and that the right hand side is uniformly bounded over $n\in\NN$
and $T>0$ by \eqref{E-moment}.
Thus, we have 
\begin{equation*}
\frac{1}{T}\;\Exp\Biggl[ \sum_{s\le T}\babs{\mathscr{D} f(\Hat{X}^n,s)}\Biggr]
\;\le\; \frac{\kappa' \norm{f}_{\Cc^{3}}}{T\sqrt{n}}
\Exp\biggl[\int_{0}^{T}\sum_{i,i'=1}^{2}
\babs{\Delta\Hat{X}^{n}_{i}(s)\Delta \Hat{X}^{n}_{i'}(s)\,}\D{s}\biggr] \;\to\; 0\,,
\end{equation*}
as $(n, T) \to \infty$. 
Therefore, taking limits in \eqref{EL7.1B}, we obtain 
\begin{equation*}
\limsup_{(n,T)\,\to\,\infty}\;
\int_{\RR^2\times \Act} \mathcal{A}^{n}f(\Hat{x},u)\,
\Phi^{Z^n}_{T} (\D{\Hat{x}}, \D{u})
\;=\;0\,.
\end{equation*}

Note that for $i=1,2$, $\cA_{i,1}^{n}$ tends to the drift of the
limiting diffusion $b_i$, while
$\cA_{i,2}^{n}$ tends to $\lambda_{i}$ as $n\to\infty$,
uniformly over  compact sets in $\RR^2 \times \Act$. 

Let $(n_{k},T_{k})$ be any sequence along which
$\Phi^{Z^n}_{T}$ converges to some $\uppi\in\cP(\RR^{2}\times\Act)$.
Let
\begin{equation*}
\Lg^{u} f(x)\;=\;
\sum_{i=1}^{2}\bigl[\lambda_i\,\partial_{ii} f(x)
+ b_i(x, u)\,\partial_i f(x)\bigr] \,. 
\end{equation*}
We have
\begin{multline}\label{L7.1X}
\int_{\RR^2 \times \Act} \Lg^{u} f(x)\,\uppi(\D{x}, \D{u}) -
\int_{\RR^2\times \Act} \mathcal{A}^{n}f(\Hat{x},u)\,
\Phi^{Z^n}_{T} (\D{\Hat{x}}, \D{u}) \\[3pt]
 \;=\;
\int_{\RR^2\times \Act}\Lg^{u} f(x)\bigl(\uppi(\D{x}, \D{u})
-\Phi^{Z^n}_{T} (\D{x}, \D{u})\bigr) \\[3pt]
+\int_{\RR^2\times \Act} \bigl(\Lg^{u} f(\Hat{x})-\mathcal{A}^{n}f(\Hat{x},u)\bigr) \,
\Phi^{Z^n}_{T} (\D{\Hat{x}}, \D{u})\,.
\end{multline}
The first term on the right hand side of \eqref{L7.1X} converges to $0$
as $n\to\infty$ by the convergence of $\Phi^{Z^n}_{T}$ to $\uppi$,
while the second term also converges to $0$ by
the uniform convergence of $\Lg^{u} f$ to $\mathcal{A}^{n}f$
on compact subsets of $\RR^2\times \Act$ and the tightness of
$\Phi^{Z^n}_{T}$.
Thus we obtain
\begin{equation*}
\int_{\RR^2 \times \Act} \Lg^{u} f(x)\,\uppi(\D{x}, \D{u})
\;=\;0 \,.
\end{equation*}
This completes the proof.  
\end{proof}

Before stating the second lemma, we first introduce a canonical
construction of scheduling policies from the
optimal control $v\in \Ussm$ for the diffusion control problems.
Recall the notation in Definition~\ref{D3.2}.

\begin{definition}\label{D7.1}
Let
$\varpi\colon \{ x \in \RR^{2}_{+}\colon e\cdot x \in \ZZ\} \to \ZZ^{2}_{+}$ be
a measurable map defined by
\begin{equation*}
\varpi(x) \;\df\; \bigl(\lfloor x_1 \rfloor, 
 e\cdot x  - \textstyle \lfloor x_1 \rfloor\bigr)\,,\qquad
x \in \RR^2\,.
\end{equation*}
For any precise  control $v \in \Ussm$, define the maps $q^{n}[v]$
and $y^{n}[v]$ by
\begin{equation*}
q^{n}[v](\Hat{x}) \;\df\;
\varpi \bigl(\bigl(e\cdot(\sqrt{n}\Hat{x} + n x^*)\bigr)^{+} v^c(\Hat{x})\bigr) \,,
\qquad
y^{n}[v](\Hat{x}) \;\df\;
\varpi \bigl(\bigl(e\cdot(\sqrt{n}\Hat{x} + n x^*)\bigr)^{-} v^s(\Hat{x})\bigr)\,.
\end{equation*}
for $\Hat{x}\in\sS^n$.
We also define define the map (Markov scheduling policy) $z^n[v]$ on $\Breve\sS^n$ by 
\begin{equation*}%\label{D7.1A}
z^{n}[v](\Hat{x}) \;\df\; \begin{bmatrix}
N_1^n - y^{n}_1[v](\Hat{x}) \quad &
x_1 - q^{n}_1[v](\Hat{x}) -  \bigl(N_1^n - y^{n}_1[v](\Hat{x})\bigr) \\[5pt]
0 \quad & x_2 - q^{n}_2[v](\Hat{x})
\end{bmatrix}\,,\quad \Hat{x}\in\Breve\sS^n\,. 
\end{equation*}
Compare this to \eqref{L2.1B}.
\end{definition}

\begin{corollary}\label{C7.1}
For any precise control $v \in \Ussm$ 
we have 
\begin{equation*}
e\cdot q^n[v]\bigl(\Hat{x}^n(x)\bigr) \wedge e \cdot y^n[v]\bigl(\Hat{x}^n(x)\bigr)
\;=\;0\,,
\quad\text{and}\quad z^n[v]\bigl(\Hat{x}^n(x)\bigr)\in\cZn(x)
\end{equation*}
for all $x\in\Breve{\sX}^{n}$, i.e., the JWC condition is
satisfied for $x\in\Breve{\sX}^{n}$.
\end{corollary}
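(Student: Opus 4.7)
The plan is to reduce both claims to a direct application of Lemma~\ref{L-JWC}. The key observation is that the map $\varpi$ is explicitly constructed so that $e\cdot\varpi(y)=\lfloor y_1\rfloor+(e\cdot y-\lfloor y_1\rfloor)=e\cdot y$ for every $y$ in its domain, and that $v\in\Ussm$ takes values in $\Act$, so $e\cdot v^c(\Hat{x})=e\cdot v^s(\Hat{x})=1$.

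First I would compute $e\cdot q^n[v](\Hat{x}^n(x))$ and $e\cdot y^n[v](\Hat{x}^n(x))$ directly from Definition~\ref{D7.1}. Using the sum-preserving property of $\varpi$ together with $e\cdot v^c=e\cdot v^s=1$, these aggregates reduce to the positive and negative parts of the relevant centered total (the mismatch between the total number of jobs and the total server capacity). The elementary identity $a^+\wedge a^-=0$ for every real $a$ then yields the first claim $e\cdot q^n[v](\Hat{x}^n(x))\wedge e\cdot y^n[v](\Hat{x}^n(x))=0$ immediately.

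Next I would verify the hypotheses of \eqref{L2.1B}. The vectors $q^n[v](\Hat{x}^n(x))$ and $y^n[v](\Hat{x}^n(x))$ lie in $\ZZ^2_+$ by construction of $\varpi$ (the first component is the floor and the second is the integer residual), the JWC identity $e\cdot q\wedge e\cdot y=0$ was established in the previous step, and the capacity balance $e\cdot(x-q)=e\cdot(N^n-y)\ge0$ is obtained from the same explicit expressions for $e\cdot q$ and $e\cdot y$; the assumption $x\in\Breve{\sX}^n$ is what guarantees that this balance is non-negative, so Lemma~\ref{L-JWC} applies. Finally, the matrix $z^n[v](\Hat{x}^n(x))$ in Definition~\ref{D7.1} is literally the right-hand side of \eqref{L2.1B} with $q=q^n[v](\Hat{x}^n(x))$ and $y=y^n[v](\Hat{x}^n(x))$, so Lemma~\ref{L-JWC} delivers $z^n[v](\Hat{x}^n(x))\in\cZn(x)$.

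There is no substantive obstacle: the triple $(q^n[v],y^n[v],z^n[v])$ was engineered precisely so that the hypotheses of Lemma~\ref{L-JWC} hold on $\Breve{\sX}^n$. The one point worth stating explicitly in the write-up is that $\varpi$ preserves the aggregate, which is both what makes the JWC identity a trivial consequence of $a^+\wedge a^-=0$ and what makes the exact capacity balance survive the integer rounding.
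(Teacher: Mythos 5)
Your overall approach --- reduce both claims to Lemma~\ref{L-JWC} using the sum-preserving property of $\varpi$ together with $e\cdot v^c=e\cdot v^s=1$ --- is exactly what the paper does; the paper's own proof is a one-sentence pointer to Lemma~\ref{L-JWC} and the definitions. Your handling of the first claim is fine: $e\cdot\varpi(y)=e\cdot y$ and $e\cdot v^c=e\cdot v^s=1$ reduce $e\cdot q^n[v]$ and $e\cdot y^n[v]$ to the positive and negative parts of one and the same scalar, so $e\cdot q^n[v]\wedge e\cdot y^n[v]=0$ follows from $a^+\wedge a^-=0$.

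Your explanation of the role of $x\in\Breve{\sX}^{n}$, however, misattributes what the restriction is for. You write that $x\in\Breve{\sX}^{n}$ ``guarantees that this balance is non-negative,'' but the aggregate balance $e\cdot(x-q^n[v])=e\cdot(N^n-y^n[v])\ge0$ holds unconditionally: once $e\cdot q^n[v]=a^+$ and $e\cdot y^n[v]=a^-$ for the centered total $a=e\cdot x-e\cdot N^n$, the common value is $e\cdot x-a^+=e\cdot N^n-a^-=(e\cdot x)\wedge(e\cdot N^n)\ge0$ for every $x\in\ZZ^2_+$, with no need to localize near $nx^*$. What the restriction $x\in\Breve{\sX}^{n}$ actually supplies is the \emph{component-wise} feasibility of the rounded vectors --- namely $q^n_i[v]\le x_i$ and $y^n_j[v]\le N^n_j$, together with the pool-2 capacity constraint $x_2-q^n_2[v]\le N^n_2$. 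Without those bounds the matrix in \eqref{L2.1B} can have negative entries and fall outside $\cZn(x)$ (take, e.g., $x_1$ small and the queue mass loaded on class~$1$); the aggregate hypotheses quoted in Lemma~\ref{L-JWC} alone do not rule this out. Because $\abs{e\cdot x-e\cdot N^n}$ is at most $c_0 n+\order(\sqrt{n})$ on $\Breve{\sX}^{n}$, while each $x^*_i>0$ and each $\nu_j>0$, the constant $c_0$ can be chosen small enough that the rounding never overshoots a class or a pool. That is what the inclusion $\Breve{\sX}^{n}\subset\sX^n$ and the accompanying construction are buying you, not the sign of the aggregate balance.
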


\begin{proof}
This follows from Lemma \ref{L-JWC} and the definition of the
maps $q^n[v]$,  $y^n[v]$ and  $z^n[v]$. 
\end{proof}

The lemma which follows
asserts that if a sequence of EJWC scheduling policies is constructed using any
precise stationary stable Markov control in a way that the long-run average
moment condition in Lemma~\ref{L7.1} is satisfied, then any limit of the
mean empirical measures of the diffusion scaled processes
agrees with the ergodic occupation measure of the limiting diffusion
corresponding to that control.
This lemma is used in the proof of upper bounds in Theorem~\ref{T-ubound}.
Recall Definition~\ref{D2.1}.

\begin{lemma}\label{L7.2}
Let $v\in\Ussm$ be a  continuous precise control,
and $\bigl\{Z^{n}\,\colon\, n\in\NN\bigr\}$ be any sequence of
admissible scheduling policies such that each
$Z^n$ agrees with
the Markov scheduling policy $z^n[v]$ given in Definition~\ref{D7.1}
on $\sqrt{n}\Breve{B}$, i.e.,
$Z^n(t) = z^n[v]\bigl(\Hat{X}^n(t)\bigr)$ whenever $\Hat{X}^n(t)\in\sqrt{n}\Breve{B}$.
For $\Hat{x}\in \sqrt{n}\Breve{B}\cap\sS^n$, we define
\begin{align*}
u^{c,n}[v] (\Hat{x}) &\;\df\; \begin{cases}
\frac{q^{n}[v](\Hat{x})}{e\cdot q^{n}[v](\Hat{x})}
& \text{if~} e\cdot q^{n}[v](\Hat{x})>0\,,\\[5pt]
v^c(\Hat{x}) &\text{otherwise,}\end{cases}
\intertext{and} 
u^{s,n}[v](\Hat{x}) &\;\df\; \begin{cases}
\frac{y^{n}[v](\Hat{x})}{e\cdot y^{n}[v](\Hat{x})}
& \text{if~} e\cdot y^{n}[v](\Hat{x})>0\,,\\[5pt]
v^s(\Hat{x}) &\text{otherwise.}\end{cases}
\end{align*}
For the process $X^n$ under the scheduling policy $Z^n$,
define the mean empirical measures 
\begin{equation} \label{E-emp2}
\Tilde\Phi^{Z^n}_{T}(A\times B)\;\df\;
\frac{1}{T}\,\Exp^{Z^n}\biggl[\int_{0}^{T}\Ind_{A\times B}
\bigl(\Hat{X}^{n}(t),u^{n}[v]\bigl(\Hat{X}^{n}(t)\bigr)\bigr)\,\D{t}\biggr]
\end{equation}
for Borel sets $A\subset\sqrt{n}\Breve{B}$ and $B\subset \Act$.
Suppose that \eqref{E-moment} holds under this sequence  $\{Z^n\}$.
Then the ergodic occupation measure $\uppi_{v}$ of the controlled
diffusion in \eqref{sde} corresponding to $v$
is the unique limit point in $\cP(\Rd\times\Act)$
of $\Tilde\Phi^{Z^n}_{T}$ as $(n,T)\to\infty$.
\end{lemma}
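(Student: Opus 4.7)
My plan is to use Lemma~\ref{L7.1} to identify limit points as ergodic occupation measures, and then exploit the explicit functional form of $u^n[v]$ to pin down the limit as $\uppi_v$. The argument proceeds in three steps: establishing that every sub-sequential limit of $\Tilde\Phi^{Z^n}_T$ lies in $\eom$, showing that such a limit must be supported on the graph of $v$, and finally invoking uniqueness of the invariant probability under $v$.

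For the first step, since $Z^n(t)=z^n[v](\Hat{X}^n(t))$ on $\sqrt{n}\Breve{B}$, Corollary~\ref{C7.1} yields $\{Z^n\}\subset\boldsymbol\fZ$. Tightness of $\{\Tilde\Phi^{Z^n}_T\}$ on $\RR^2\times\Act$ follows from \eqref{E-moment} and Markov's inequality together with compactness of $\Act$. A direct check from the balance equations \eqref{baleq-hat} and Definitions~\ref{D6.1} and~\ref{D7.1} shows that on the event $\{\Hat{X}^n(t)\in\sqrt{n}\Breve{B}\}$ one has $U^n(t)=u^n[v](\Hat{X}^n(t))$, while the complementary event carries vanishing mean occupation time:
\begin{equation*}
\frac{1}{T}\,\Exp^{Z^n}\!\int_0^T\Ind_{\{\Hat{X}^n(s)\notin\sqrt{n}\Breve{B}\}}\D{s}\;\le\;\frac{1}{\sqrt{n}\,\rho\,T}\,\Exp^{Z^n}\!\int_0^T\abs{\Hat{X}^n(s)}\D{s}\;\xrightarrow[(n,T)\to\infty]{}\;0\,,
\end{equation*}
where $\rho$ is the radius of $\Breve{B}$. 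Therefore $\Tilde\Phi^{Z^n}_T$ and $\Phi^{Z^n}_T$ share the same sub-sequential limits, and by Lemma~\ref{L7.1} any such limit $\uppi$ lies in $\eom$.

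For the second step, I would show that $u^n[v]\to v$ uniformly on every compact subset of $\RR^2$: since $\varpi$ preserves the sum $e\cdot\,\cdot$, the discretization error in $q^n[v],y^n[v]$ is at most $\sqrt{2}$ per coordinate, and the denominators in the normalizations defining $u^{c,n}[v], u^{s,n}[v]$ grow linearly in $n$ for $\Hat{x}$ in any fixed compact set. Fix $f\in\Cc_c(\RR^2\times\Act)$ and let $\Tilde\Phi^{Z^n,1}_T$ denote the $\RR^2$-marginal of $\Tilde\Phi^{Z^n}_T$. The structural identity
\begin{equation*}
\int f(x,u)\,\Tilde\Phi^{Z^n}_T(\D{x},\D{u})\;=\;\int f\bigl(x,u^n[v](x)\bigr)\,\Tilde\Phi^{Z^n,1}_T(\D{x})\,,
\end{equation*}
together with the continuity of $v$, the uniform convergence $u^n[v]\to v$ on $\mathrm{supp}(f)$, and weak convergence $\Tilde\Phi^{Z^n,1}_T\to\uppi^1$, then yields $\uppi(f)=\int f(x,v(x))\,\uppi^1(\D{x})$. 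Hence $\uppi(\D{x},\D{u})=\uppi^1(\D{x})\,\delta_{v(x)}(\D{u})$. Substituting this disintegration into $\int\Lg^u h(x)\,\uppi(\D{x},\D{u})=0$, valid for all $h\in\Cc^\infty_c(\RR^2)$ by Step~1, gives $\int\Lg^{v(x)}h(x)\,\uppi^1(\D{x})=0$, the infinitesimal invariance equation of the diffusion controlled by $v\in\Ussm$. Uniqueness of the invariant probability $\mu_v$ forces $\uppi^1=\mu_v$, so $\uppi=\uppi_v$, and since every sub-sequential limit equals $\uppi_v$ the full family $\Tilde\Phi^{Z^n}_T$ converges to $\uppi_v$ as $(n,T)\to\infty$.

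The main technical obstacle I anticipate lies in the first step: verifying the identity $U^n(t)=u^n[v](\Hat{X}^n(t))$ on $\sqrt{n}\Breve{B}$ requires careful bookkeeping with the rounding introduced by $\varpi$, and one must absorb the $o(\sqrt{n})$ discrepancy allowed by Assumption~\ref{as-para} between $e\cdot N^n$ and $n\,e\cdot\nu$ into the error terms without disrupting the weak-convergence argument. Once the identification of $u^n[v]$ with the true normalised queue/idleness vector is secured modulo these negligible corrections, the remaining analysis reduces to a standard tightness-plus-continuity argument in $\cP(\RR^2\times\Act)$.
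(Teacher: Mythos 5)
Your overall strategy (tightness $\to$ identification in $\eom$ $\to$ disintegration on the graph of $v$ $\to$ uniqueness of $\mu_v$) does match the paper's, and Steps~2 and~3 are essentially correct. The gap is concentrated in Step~1.

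The assertion that $U^n(t)=u^n[v](\Hat{X}^n(t))$ on $\{\Hat{X}^n(t)\in\sqrt{n}\Breve{B}\}$ is false, and as a consequence so is the claim that $\Tilde\Phi^{Z^n}_T$ and $\Phi^{Z^n}_T$ share the same sub-sequential limits. Under JWC, $e\cdot\Hat{Q}^n(t)>0$ forces $e\cdot\Hat{Y}^n(t)=0$; on that event $U^{c,n}(t)$ does coincide with $u^{c,n}[v]$, but $U^{s,n}(t)=e_2=(0,1)$ (by the default branch of Definition~\ref{D6.1}) whereas $u^{s,n}[v](\Hat{X}^n(t))=v^s(\Hat{X}^n(t))$, which differ for a generic precise control $v$. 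The mirror discrepancy occurs on $\{e\cdot\Hat{Y}^n>0\}$, and on $\{e\cdot\Hat{Q}^n=e\cdot\Hat{Y}^n=0\}$ both components disagree. So the two mean empirical measures have the same $\RR^2$-marginal but genuinely different conditional laws of the $\Act$-coordinate, and they converge to different elements of $\cP(\RR^2\times\Act)$: the limit of $\Phi^{Z^n}_T$ has the $\Act$-component collapsing to $e_1$/$e_2$ on the relevant half-spaces, while the limit of $\Tilde\Phi^{Z^n}_T$ should be $\delta_{v(x)}$. Thus you cannot transfer Lemma~\ref{L7.1} to $\Tilde\Phi^{Z^n}_T$ by equating the two measures, and the vanishing occupation time outside $\sqrt{n}\Breve{B}$ is not the issue --- the discrepancy lives inside the ball.

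The fix is to observe that the generator $\cA^n f(\Hat{x},u)$ in \eqref{EL7.1C} is insensitive to exactly this discrepancy: the components of $u^c$ appear only multiplied by $(e\cdot\Hat{x})^+$, and those of $u^s$ only multiplied by $(e\cdot\Hat{x})^-$, which are precisely the factors that vanish where $U^n$ and $u^n[v]$ disagree. Hence $\cA^n f\bigl(\Hat{x},U^n\bigr)=\cA^n f\bigl(\Hat{x},u^n[v](\Hat{x})\bigr)$ pathwise on the JWC set, so the It\^o identity \eqref{EL7.1B}, which is written in terms of $\Phi^{Z^n}_T$, can equally be read as an identity for $\int\cA^n f\,\D\Tilde\Phi^{Z^n}_T$. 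From there the argument of Lemma~\ref{L7.1} can be re-run verbatim on $\Tilde\Phi^{Z^n}_T$, and this is exactly what the paper does by first restricting the empirical measure to the graph of $u^n[v]$, disintegrating, and then letting the $\RR^2$-marginal $\nu^n$ carry the tightness (which follows directly from \eqref{E-moment}). Your Steps~2 and~3 --- uniform convergence $u^n[v]\to v$ on compacts via the $\order(1)$ rounding error and the linear growth of the denominator, the structural identity for the disintegration, and the passage to the infinitesimal invariance equation with uniqueness of $\mu_v$ --- are all sound and align with the paper's treatment once Step~1 is repaired.
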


\begin{proof}
It follows by Corollary~\ref{C7.1} that $\{Z^{n}\}\in\boldsymbol\fZ$.
Also, by the continuity of $v$, we have
\begin{equation}\label{EL7.2A}
\sup_{\Hat{x}\in\sS^n\cap K}\;
\abs{u^{n}[v](\Hat{x})-v(\Hat{x})}\;\to\; 0\qquad\text{as}~n\to\infty\,,
\end{equation}
for any compact set $K\subset\RR^2$.
Also, for any $f \in \Cc^{\infty}_c(\RR^2\times\Act)$, it holds that
\begin{equation*}
\int_{\RR^{2}\times\Act} f(\Hat{x},u) \,\Tilde\Phi^{Z^n}_{T}(\D{\Hat{x}},\D{u})
\;=\;\frac{1}{T}\;\Exp^{Z^n}\left[\int_{0}^{T} f\bigl(\Hat{X}^{n}(t),
u^{n}[v]\bigl(\Hat{X}^{n}(t)\bigr)\bigr)\,\D{t}\right]\,,
\end{equation*}
for all sufficiently large $n$ such that the support
of $f$ is contained in $\sqrt{n}\Breve{B}$.
Therefore, if $\uppi^{n}$ is any limit
point of $\Tilde\Phi^{Z^n}_{T}$ as $T\to\infty$,
and we disintegrate $\uppi^{n}$ as
\begin{equation}\label{EL7.2B}
\uppi^{n}(\D{\Hat{x}},\D{u})\;=\;\nu^{n}(\D{\Hat{x}})\,\xi^{n}(\D{u}\mid \Hat{x})\,,
\end{equation}
then we have
\begin{equation*}
\int_{\RR^2 \times \Act} \mathcal{A}^{n}f(\Hat{x},u)\, \uppi^{n} (\D{\Hat{x}}, \D{u})
\;=\;
\int_{\RR^2} \mathcal{A}^{n}f\bigl(\Hat{x},u^{n}[v](\Hat{x})\bigr)\,
\nu^{n}(\D{\Hat{x}})\,.
\end{equation*}
By Lemma~\ref{L7.1}, the sequence $\{\nu^{n}\}$ is tight.
Let $\{n\}\in\NN$ be any increasing sequence such
that $\nu^{n}\to\nu\in\cP(\RR^{2})$.
To simplify the notation, let
$\Tilde{\cA}^{n}f(\Hat{x})\df\mathcal{A}^{n}f\bigl(\Hat{x},u^{n}[v](\Hat{x}))$.
 We have
\begin{equation}\label{EL7.2C}
\int_{\RR^2} \Tilde{\cA}^{n}f\,\D\nu^{n} -
\int_{\RR^{2}}\Lg^{v}f\,\D\nu
\;=\;\int_{\RR^2} \bigl(\Tilde{\cA}^{n}f-\Lg^{v}f\bigr)\,\D\nu^{n}
+\int_{\RR^2} \Lg^{v}f\bigr(\D\nu^{n}-\D\nu\bigr)\,.
\end{equation}
It follows by \eqref{EL7.2A} that $\Tilde{\cA}^{n}f-\Lg^{v}f\to0$,
uniformly as $n\to\infty$, which
implies that the first term on the right hand side of \eqref{EL7.2C} converges
to $0$.
The second term does the same by the convergence of $\nu^{n}$
to $\nu$.
By Lemma~\ref{L7.1}, we have $\int_{\RR^2} \Tilde{\cA}^{n}f\,\D\nu^{n}\to0$
as $n\to\infty$.
Therefore, we obtain
\begin{equation*}
\int_{\RR^{2}}\Lg^{v}f(x)\,\nu(\D{x})\;=\;0\,,
\end{equation*}
and this means that $\nu$ is an invariant probability measure for
the diffusion
associated with the control $v$.
Next note that the Markov control $\xi^{n}$ in \eqref{EL7.2B}
agrees with $u^{n}[v](\Hat{x})$ when $\Hat{x}\in \sqrt{n}\Breve{B}\cap\sS^n$
by definition.
In other words,
$\xi^{n}(\D{u}\mid \Hat{x}) = \delta_{u^{n}[v](\Hat{x})}(u)$,
where $\delta$ denotes the Dirac measure.
It then follows by \eqref{EL7.2A} that $\xi^{n}$ converges
to $v$ as $n\to\infty$ in the topology of Markov controls
\cite[Section~2.4]{book}.
The ergodic occupation measure $\uppi_v\in\cP(\RR^2\times\Act)$ is given by
$\uppi_v(\D{x},\D{u}) \df \nu(\D{x},\D{u})\delta_{v(x)}(u)$.
With $g\in\Cc_c(\RR^2\times\Act)$, i.e., a continuous function with compact
support, we write
\begin{multline}\label{EL7.2D}
\babss{\int_{\RR^2\times\Act} g(x,u) \bigl(\uppi_v(\D{x},\D{u})
-\uppi^{n}(\D{x},\D{u})\bigr)} \;\le\;
\babss{\int_\Act
\biggl(\int_{\RR^2} g(x,u) \bigl(\nu(\D{x})-\nu^n(\D{x})\bigr)\biggr)
\xi^{n}(\D{u}\mid x)}\\[5pt]
+ \babss{\int_\Act
\biggl(\int_{\RR^2} g(x,u)\nu(\D{x})\biggr)
\bigr(\xi^{n}(\D{u}\mid x)-\delta_{v(x)}(u)\bigr)}\,.
\end{multline}
The first term on the right hand side of \eqref{EL7.2D} converges to $0$
as $n\to\infty$ by  the convergence of $\nu^{n}\to\nu$ in $\cP(\RR^{2})$.
Since $\nu$ has a continuous density, the second term also converges
to $0$ as $n\to\infty$ by \cite[Lemma~2.4.1]{book}.
Therefore \eqref{EL7.2D} shows that $\uppi^{n}\to\uppi_v$ in
$\cP(\RR^2\times\Act)$, and this completes the proof.
\end{proof}

%%%%%%%%%%%%%%%%%%%%%%%%%%%%%%%%%%%%%%%%%%%%%%%%%%%%%%%%%%%%%%%%%%%%%
\section{Proof of the lower bounds}\label{S8}

In this section, we prove the lower bounds in
Theorem~\ref{T-lbound}. 
The following lemma which applies to the diffusion-scaled process,
is analogous to Lemma~3.1~(c) for the diffusion limit in \citet{AP15}. 

\begin{lemma}\label{L8.1}
There  exist constants
$C_{1}$ and $C_{2}$ independent of $n$ such that
\begin{equation}\label{EL8.1A}
\limsup_{T\to\infty}\;\frac{1}{T}\,
\Exp^{Z^n}\biggl[\int_{0}^{T}\babs{\Hat{X}^{n}(s)}^m\,\D{s}\biggr]
\;\le\;
C_{1}+C_{2} J_{\mathsf{o}}
\bigl(\Hat{X}^{n}(0),\Hat{Z}^{n}\bigr)\qquad\forall\,n\in\NN\,,
\end{equation}
for any sequence $\{Z^{n}\in\fZ^{n},\;n\in\NN\}$,
where $m\ge1$ is as in \eqref{Erhat}.
\end{lemma}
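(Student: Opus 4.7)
The plan is to derive a Foster--Lyapunov drift inequality of the form
\[
\widehat\cL_n^{Z^n} V(\Hat x) \;\le\; C_0 + C_1\,\Hat r_{\mathsf{o}}(\Hat q) - C_2 \,|\Hat x|^m
\]
for a nonnegative function $V$ satisfying $V(\Hat x)\ge c|\Hat x|^m$, with constants independent of $n$ and of $Z^n \in \fZ^n$, and then to apply Dynkin's formula on the stopped process up to $\tau_k = \inf\{t: |\Hat X^n(t)|\ge k\}$. Letting $k\to\infty$ by Fatou, dividing by $T$, and passing to $\limsup_T$ causes $V(\Hat X^n(0))/T$ to vanish and delivers \eqref{EL8.1A}. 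The natural Lyapunov choice is
\[
V(\Hat x) \;\df\; \frac{1}{m}\bigl(|e\cdot \Hat x|^m + \beta\,|\Hat x_2|^m\bigr)
\]
(smoothed near the origin when $1\le m<2$), with $\beta>0$ to be fixed; since $|\Hat x_1|\le|e\cdot \Hat x|+|\Hat x_2|$, a negative-definite drift in these two coordinates controls $|\Hat x|^m$, and $\Hat r_{\mathsf o}$ bounds $(\min_i\xi_i)\sum_i(\Hat Q^n_i)^m$ from below.

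The drift of $\Hat X^n_2$ is especially clean: substituting $\Hat Z^n_{22}=\Hat X^n_2-\Hat Q^n_2$ into \eqref{hatXn-2} gives $b^n_2 = -\mu_{22}^n\Hat X^n_2 + (\mu_{22}^n - \gamma_2^n)\Hat Q^n_2 + \ell_2^n$, and a standard Young-inequality computation---together with uniform control of the second-order Taylor corrections in the discrete generator, whose worst contribution is of order $|\Hat x_2|^{m-2} + o(1)\Hat Q^n_2\,|\Hat x_2|^{m-2}$---produces
\[
\widehat\cL_n^{Z^n}\,\tfrac{|\Hat x_2|^m}{m} \;\le\; -c\,|\Hat x_2|^m + C\,(\Hat Q^n_2)^m + C.
\]
For $|e\cdot\Hat x|^m$, I rewrite $b^n_1 + b^n_2$ using the balance equations \eqref{baleq-hat} together with $\Hat Z^n_{11}+\Hat Y^n_1 = o(1)$ and $\Hat Z^n_{12}+\Hat Z^n_{22}+\Hat Y^n_2 = o(1)$ (uniformly in $n$, by $\sqrt n (N_j^n/n -\nu_j)\to 0$) in the form
\[
b^n_1 + b^n_2 \;=\; -\mu_{12}^n(e\cdot\Hat X^n) + (\mu_{11}^n-\mu_{12}^n)\Hat Y^n_1 + (\mu_{12}^n-\mu_{22}^n)\Hat X^n_2 + \sum_{i=1,2}\alpha_i\Hat Q^n_i + \ell_1^n + \ell_2^n + o(1).
\]
The delicate term is $(\mu_{11}^n-\mu_{12}^n)\Hat Y^n_1$, which carries no a priori sign; I handle it through the work-conservation bound
\[
\Hat Y^n_1 \;\le\; e\cdot\Hat Y^n \;=\; e\cdot\Hat Q^n - e\cdot\Hat X^n + o(1) \;\le\; \Hat Q^n_1+\Hat Q^n_2+(e\cdot\Hat X^n)^-+o(1),
\]
which follows from $\Hat Y^n_j\ge 0$ and the scalar balance $e\cdot\Hat X^n = e\cdot\Hat Q^n - e\cdot\Hat Y^n + o(1)$. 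Combined with $(e\cdot\Hat X^n)^-\,|e\cdot\Hat x|^{m-1}\le |e\cdot\Hat x|^m$ and Young's inequality, this yields
\[
|\mu_{11}^n-\mu_{12}^n|\,\Hat Y^n_1\,|e\cdot\Hat x|^{m-1} \;\le\; |\mu_{11}^n-\mu_{12}^n|\,|e\cdot\Hat x|^m + \epsilon\,|e\cdot\Hat x|^m + C_\epsilon\sum_i(\Hat Q^n_i)^m + C_\epsilon,
\]
so that combined with the stabilizing $-\mu_{12}^n|e\cdot\Hat x|^m$ the net coefficient on $|e\cdot\Hat x|^m$ is $-\min(\mu_{11}^n,\mu_{12}^n)+O(\epsilon)<0$ for small $\epsilon$. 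Splitting the remaining mixed terms $|\Hat x_2|\,|e\cdot\Hat x|^{m-1}$ and $\Hat Q^n_i\,|e\cdot\Hat x|^{m-1}$ by Young's gives
\[
\widehat\cL_n^{Z^n}\,\tfrac{|e\cdot\Hat x|^m}{m} \;\le\; -c\,|e\cdot\Hat x|^m + C\,|\Hat x_2|^m + C\sum_i(\Hat Q^n_i)^m + C,
\]
and choosing $\beta$ large enough that the $|\Hat x_2|^m$ contribution is dominated by $-\beta c\,|\Hat x_2|^m$ closes the Lyapunov inequality.

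The principal difficulty is precisely the handling of the coupling term $(\mu_{11}^n-\mu_{12}^n)\Hat Y^n_1$: since $\Hat Y^n_1$ can only be controlled through the idleness, which itself is bounded only in terms of queue plus state magnitude, a Lyapunov function built on $|\Hat X^n_1|^m$ directly would leave this term unabsorbable. Working instead with $|e\cdot\Hat X^n|^m$ allows the part of $\Hat Y^n_1$ proportional to $(e\cdot\Hat X^n)^-$ to cancel (up to $O(\epsilon)$) against the dominant drift $-\mu_{12}^n|e\cdot\Hat X^n|^m$, leaving only queue-dependent remainders. Routine technicalities---smoothing $V$ at the origin when $m<2$, absorbing the $O(n^{-1/2})$ jump-size residuals of the discrete generator, and handling a finite range of small $n$ by adjusting the additive constant $C_1$---complete the argument.
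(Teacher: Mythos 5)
Your proposal is correct, and it takes a genuinely different route from the paper. The paper's proof builds the Lyapunov function on the raw coordinates, $\mathscr{V}(\Hat x)=\mathscr{V}_1(\Hat x_1)+\beta\mathscr{V}_2(\Hat x_2)$ with $\mathscr{V}_i(x)=\abs{x_i}^{m+1}/\sqrt{1+\abs{x_i}^2}$, and then performs a four-way case split governed by $\Hat\xi\df(e\cdot\Hat q)\wedge(e\cdot\Hat y)$: when $\Hat\xi>0$ the scheduling policy is not jointly work conserving, and the paper separately treats $\Hat\xi=\Hat y_1$ and $\Hat\xi=\Hat q_2$ (using work conservation to force $\Hat q_1=0$, $\Hat y_2=0$ whenever $\Hat\xi>0$); when $\Hat\xi=0$ it reverts to the JWC parameterization in $(u^c,u^s)$ and splits on the sign of $e\cdot\Hat x$. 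In each branch it identifies a negative diagonal drift plus queue-controlled or $\Hat x_2$-controlled cross terms (the latter absorbed by $\beta$), all appealing to the bound $\Hat\xi\le e\cdot\Hat q$. Your proposal instead works with the Lyapunov function $V(\Hat x)=\tfrac1m(\abs{e\cdot\Hat x}^m+\beta\abs{\Hat x_2}^m)$, which trades the awkward $\Hat x_1$-coordinate for the observable $e\cdot\Hat x$; then the single problematic term $(\mu_{11}^n-\mu_{12}^n)\Hat Y^n_1$ is dispatched in one stroke via $\Hat Y^n_1\le e\cdot\Hat Y^n=e\cdot\Hat Q^n-e\cdot\Hat X^n$, so that its $(e\cdot\Hat X^n)^-$-contribution cancels against the dominant drift leaving a net coefficient $-\min(\mu_{11},\mu_{12})+\order(\epsilon)$ on $\abs{e\cdot\Hat x}^m$, and the residue is purely queue terms. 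This eliminates the case analysis entirely and exploits exactly the same structural fact as the paper (the scalar balance $e\cdot\Hat X^n=e\cdot\Hat Q^n-e\cdot\Hat Y^n+o(1)$) but at the level of coordinate choice rather than case splitting. Both deliver the same estimate; yours is arguably the more streamlined derivation, while the paper's coordinate choice carries over more directly to the geometric-ergodicity estimates in Appendix~\ref{App1}, which are organized around the same $(\Hat x_1,\Hat x_2)$ Lyapunov form.
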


\begin{proof}
Let 
$\mathscr{V}(x)\;\df\;  \mathscr{V}_{1}(x_1)+\beta \mathscr{V}_{2}(x_2)$,
$x\in \RR^2$, where $\beta$ is a positive constant
to be determined later, and
$\mathscr{V}_{i}(x)=\frac{\abs{x_{i}}^{m+1}}{\sqrt{1 +\abs{x_{i}}^{2}}}$ for $m\ge1$.
By applying It\^o's formula on $\mathscr{V}$, with
$\Exp =\Exp^{Z^n}$, we obtain from
\eqref{hatXn-1} that for $t\ge 0$,
\begin{equation}\label{EL8.1B}
\Exp\bigl[\mathscr{V}(\Hat{X}^{n}(t))\bigr] \;=\;
\Exp\bigl[\mathscr{V}(\Hat{X}^{n}(0)) \bigr] +
\Exp\biggl[\int_{0}^{t}
\mathscr{A}^n
\mathscr{V} \bigl(\Hat{X}^{n}(s), \Hat{Z}^{n}(s)\bigr)\,\D{s} \biggr] 
+\Exp \Biggl[\sum_{s\le t} \mathscr{D} \mathscr{V}(\Hat{X}^n,s) \Biggr]\,,  
\end{equation}
where $\mathscr{D} \mathscr{V}(\Hat{X}^n,s)$ is defined as in \eqref{EL7.1A}, 
\begin{equation*}
\mathscr{A}^n \mathscr{V} 
\bigl(\Hat{x}, \Hat{z} \bigr) \;\df \; \sum_{i=1}^2
\biggl(\mathscr{A}^{n}_{i,1}(\hat{x}, \Hat{z}) \partial_i \mathscr{V}(\hat{x})
+\mathscr{A}^{n}_{i,2}(\Hat{x}, \Hat{z})\partial_{ii} \mathscr{V}(\Hat{x}) \biggr)\,,
\end{equation*}
and
\begin{align*}
\mathscr{A}^{n}_{1,1}\bigl(\Hat{x}, \Hat{z}\bigr)&\;\df\;
\ell^{n}_{1} - \mu^{n}_{11} \Hat{z}_{11} - \mu^{n}_{12} \Hat{z}_{12} -\gamma^{n}_{1}
\bigl(\Hat{x}_1 - \Hat{z}_{11} - \Hat{z}_{12} \bigr)\,,\\[5pt]
\mathscr{A}^{n}_{2,1}\bigl(\Hat{x}, \Hat{z}\bigr)&\;\df\;
\ell^{n}_{2} - \mu^{n}_{22} \Hat{z}_{22}-\gamma^{n}_{2}
\bigl(\Hat{x}_2 - \Hat{z}_{22}\bigr)\,,\\[5pt]
\mathscr{A}^{n}_{1,2}\bigl(\Hat{x}, \Hat{z}\bigr)&\;\df\;
\frac{1}{2}\biggl[\frac{\lambda^{n}_{1}}{n} + \bigl( \mu_{11}^{n} z_{11}^*
+  \mu_{12}^{n}  z_{12}^* \bigr) 
+\frac{1}{ \sqrt{n}} \bigl(\mu_{11}^{n} \Hat{z}_{11} + \mu_{12}^{n} \Hat{z}_{12} \bigr) 
+ \frac{\gamma^{n}_1}{ \sqrt{n}} \bigl(  \Hat{x}_1- 
\Hat{z}_{11} -   \Hat{z}_{12}  \bigr)
\biggr]\,,  \\[5pt]
\mathscr{A}^{n}_{2,2}\bigl(\Hat{x}, \Hat{z}\bigr)&\;\df\;
\frac{1}{2}\biggl[\frac{\lambda^{n}_{2}}{n} + \mu_{22}^{n} z_{22}^*
+\frac{1}{ \sqrt{n}} \mu_{22}^{n}  \Hat{z}_{22}  
+ \frac{\gamma^{n}_2}{ \sqrt{n}} \bigl(  \Hat{x}_2 - \Hat{z}_{22}   \bigr)
\biggr]\,,  
\end{align*}
for $\Hat{x} \in \sS^n$,
and $\Hat{z}_{ij} \df \frac{1}{\sqrt{n}}(z_{ij} - nz^*_{ij})$
for $z_{ij} \in \ZZ_+ $ and $z^*$ defined in \eqref{Ez*}.
We also use the nonnegative variables $\Hat{q}_i$ and $\Hat{y}_i$, $i=1,2$,
which are defined as functions of $\Hat{x}$ and $\Hat{z}$ via
the balance equations \eqref{baleq-hat}, keeping in mind that
the work conservation condition holds for these.

Define
\begin{align*}
\Bar{\mathscr{A}}_{1,1}\bigl(x, z \bigr) & \;\df\;
\ell_{1} - \mu_{11} z_{11} - \mu_{12} z_{12} -\gamma_{1}
\bigl(x_1- z_{11} - z_{12}\bigr)\,, \\[5pt]
\Bar{\mathscr{A}}_{2,1}\bigl(x, z\bigr) &\;\df\;
\ell_{2} - \mu_{22} z_{22}-\gamma_{2}
\bigl(x_2-  z_{22}\bigr)\,,
\end{align*}
for $x\in \RR^2$ and $z \in \RR^{2\times 2}$. 

By the convergence of the parameters in Assumption~\ref{as-para},
we have that for $i=1,2$, 
\begin{equation}\label{EL8.1C}
\babs{\Bar{\mathscr{A}}_{i,1}\bigl(\hat{x}, \Hat{z} \bigr)
-\mathscr{A}^{n}_{i,1}(\Hat{x}, \Hat{z})}
\;\le\; \kappa_{1}(n)(\norm{\Hat{x}}+ \norm{\Hat{z}})
\end{equation}
for some constant $\kappa_{1}(n)\searrow 0$ as $n\to\infty$.

Let $\Hat\xi\df (e\cdot \Hat{q})\wedge (e\cdot\Hat{y})$.
We claim that if $\Hat{\xi}^{n}>0$ then
$\Hat{q}_1=0$,  $\Hat{y}_2=0$ by the work conservation condition.
Indeed since $\Hat{q}_i\wedge\Hat{y}_2=0$ for $i=1,2$,
then $\Hat{\xi}^{n}>0$ implies that $\Hat{y}_2=0$, which
in turn implies that $\Hat{y}_1>0$.
This of course implies that $\Hat{q}_1=0$.

If $\Hat\xi=\Hat{y}_1$, then by the balance equations we have
$\Hat{z}_{11} = -\Hat\xi$, $\Hat{z}_{12}=\Hat{x}_1+\Hat\xi$, and
$\Hat{z}_{22}=\Hat{x}_2-\Hat{q}_2$.
On the other hand, if $\Hat\xi=\Hat{q}_2$, then we obtain
$\Hat{z}_{11} = \Hat{x}_1+\Hat{x}_2-\Hat\xi$,
$\Hat{z}_{12}=\Hat\xi-\Hat{x}_2$, and
$\Hat{z}_{22}=\Hat{x}_2-\Hat\xi$.
Hence when $\Hat\xi>0$ we have
\begin{equation}\label{EL8.1D}
\begin{split}
\begin{alignedat}[t]{100}
&\begin{aligned}
\Bar{\mathscr{A}}_{1,1}(\Hat{x},\Hat{z}) &\;=\;  - \mu_{12} \Hat{x}_1
+ (\mu_{11} - \mu_{12})\,\Hat\xi + \ell_1\\[5pt]
\Bar{\mathscr{A}}_{2,1}(\Hat{x},\Hat{z}) &\;=\; -  \mu_{22} (\Hat{x}_2 - \Hat{q}_2)
- \gamma_2 \Hat{q}_2 + \ell_2
\end{aligned}
&&\qquad \text{if~} \Hat{y}_1<\Hat{q}_2\,,   
\\[7pt]
&\begin{aligned}
\Bar{\mathscr{A}}_{1,1}(\Hat{x},\Hat{z}) &\;=\;  - \mu_{11} \Hat{x}_1
+ (\mu_{11} - \mu_{12})\, (\Hat\xi- \Hat{x}_2) + \ell_1\\[5pt]
\Bar{\mathscr{A}}_{2,1}(\Hat{x},\Hat{z}) &\;=\; -  \mu_{22} (\Hat{x}_2 -  \Hat\xi)
- \gamma_2 \Hat\xi + \ell_2
\end{aligned}
&&\qquad \text{if~}  \Hat{y}_1\ge\Hat{q}_2\,.  
\end{alignedat}
\end{split}
\end{equation}
and when $\Hat\xi=0$, we can use the parameterization
$\Hat{q}=(e\cdot \Hat{x})^+ u^c$ and $\Hat{y}=(e\cdot \Hat{x})^- u^s$
and \eqref{EL7.1C} to obtain
\begin{equation}\label{EL8.1E}
\begin{split}
\begin{alignedat}[t]{100}
&\begin{aligned}
\Bar{\mathscr{A}}_{1,1}(\Hat{x},\Hat{z}) &\;=\;  - \mu_{12} \Hat{x}_1
+ (\mu_{12} - \gamma_1)\,\Hat{q}_1 + \ell_1\\[5pt]
\Bar{\mathscr{A}}_{2,1}(\Hat{x},\Hat{z}) &\;=\; -  \mu_{22} (\Hat{x}_2 - \Hat{q}_2)
- \gamma_2 \Hat{q}_2 + \ell_2
\end{aligned}
&&\qquad \text{if~} (e\cdot \Hat{x})^+>0\,,
\\[7pt]
&\begin{aligned}
\Bar{\mathscr{A}}_{1,1}(\Hat{x},\Hat{z}) &\;=\;   
- (\mu_{12}(1- u_1^s)+\mu_{11} u_1^s) \Hat{x}_1
- (\mu_{11} - \mu_{12})\, \Hat{x}_2\,u^s_1 + \ell_1 \\[5pt]
\Bar{\mathscr{A}}_{2,1}(\Hat{x},\Hat{z}) &\;=\; -  \mu_{22} \Hat{x}_2 + \ell_2
\end{aligned}
&&\qquad \text{if~}  (e\cdot \Hat{x})^- \ge 0\,.
\end{alignedat}
\end{split}
\end{equation}

It follows by the above analysis that
\begin{equation}\label{EL8.1F}
\abs{z_{ij}}\,\in\,\order(\abs{x}+\abs{q})\,,\qquad i,j\in\{1,2\}\,.
\end{equation}
Hence we have
\begin{equation}\label{EL8.1G}
\mathscr{A}^{n}_{i,2}(\Hat{x},\Hat{z})
\;\in\; \order(1+n^{-\nicefrac{1}{2}}\abs{\Hat{x}})\,.
\end{equation}
Following the steps in the proof of Lemma~\ref{L4.1},
and also using the fact that $\Hat\xi\le e\cdot\Hat{q}$
and Young's inequality, it follows by
\eqref{EL8.1D}--\eqref{EL8.1E} that we can choose $\beta>0$ and positive constants
$c_1$ and $c_2$ such that
\begin{equation}\label{EL8.1H}
\sum_{i=1}^2 \Bar{\mathscr{A}}^{n}_{i,1}(\hat{x}, \Hat{z})
\partial_i \mathscr{V}(\hat{x})
\;\le\; - c_{1} \mathscr{V}(\Hat{x}) + c_2 \bigl(1+ \abs{\Hat{q}}^m\bigr)\,.
\end{equation}
Thus, by \eqref{EL8.1C}, \eqref{EL8.1G}, and \eqref{EL8.1H}
we obtain
\begin{equation} \label{EL8.1I}
\mathscr{A}^n \mathscr{V}(\Hat{x},\Hat{z})
\;\le\; - c'_{1} \mathscr{V}(\Hat{x}) + c'_2 \bigl(1+ \abs{\Hat{q}}^m\bigr)
\end{equation}
for some positive constants $c'_1$ and $c'_2$.

For the jumps in \eqref{EL8.1B}, we first note that by the definition of
$\mathscr{V}_{i}$,
since  there exists a
positive constant $c_3$ such that 
\begin{equation*}
\sup_{\abs{x_i'-x_i} \le 1}\, \babs{\mathscr{V}_{i}''(x_i')} \;\le\;
c_3 \bigl(1+\abs{x_i}^{m-2}\bigr)
\qquad\forall\,x_i\in\RR\,.
\end{equation*}
Since also the jump size is of order $\frac{1}{\sqrt{n}}$,
then by Taylor's expansion we obtain 
\begin{equation*}
\Delta\mathscr{V}_{i}\bigl(\Hat{X}^{n}(s)\bigr)
-\mathscr{V}_{i}'\bigl(\Hat{X}^{n}(s-)\bigr)\cdot\Delta\Hat{X}^{n}_{i}(s)
\;\le\; \frac{1}{2} \sup_{\abs{x_i' - \Hat{X}^{n}_i(s-)} \le 1} \;
\abs{\mathscr{V}_{i}''(x_i')}\, (\Delta\Hat{X}^{n}_{i}(s))^2\,.
\end{equation*}
for $i=1,2$.
Recall the definitions of $\Bar{\mathcal{X}}^n_1$ and
$\Bar{\mathcal{X}}^n_2$ in \eqref{BarcXn}. 
Thus, for $i=1,2$, using also \eqref{EL8.1F}, we obtain
\begin{align}\label{EL8.1J}
\Exp \Biggl[\sum_{s\le t} \mathscr{D} \mathscr{V}_{i}(\Hat{X}^n,s) \Biggr]
 &\le\; \Exp \Biggl[ \sum_{s\le t} c_3
\Bigl(1+\abs{\Hat{X}^{n}_{i}(s-)}^{m-1} \Bigr)
\bigl(\Delta\Hat{X}^{n}_{i}(s)\bigr)^2 \Biggr]
\nonumber\\[5pt]
&\le\; c_3 \Exp \biggl[ \int_0^t \Bigl(1+ \abs{\Hat{X}^{n}_{i}(s)}^{m-1} \Bigr)
\Bar{\mathcal{X}}^n_i(s) \,\D{s} \biggr]
\nonumber\\[5pt]
&\le\; c_4\Exp \biggl[ \int_0^t \Bigl(1+ \abs{\Hat{X}^{n}_{i}(s)}^{m-1} \Bigr)
\Bigl(1+n^{-\nicefrac{1}{2}}\bigl(\abs{\Hat{X}^{n}(s)}+\abs{\Hat{Q}^{n}(s)}
\bigr)\Bigr)\,\D{s}
\biggr]\,, 
\end{align}
for some positive constant $c_4$.
Therefore, by \eqref{EL8.1B}, \eqref{EL8.1I},
and \eqref{EL8.1J}, we can choose positive constants $c_5$ and $c_6$
such that
\begin{equation*}
\Exp\bigl[\mathscr{V}(\Hat{X}^{n}(t))\bigr] \;\le\;
\Exp\bigl[\mathscr{V}(\Hat{X}^{n}(0))\bigr]
+ c_6 t - c_5 \Exp
\biggl[ \int_0^t\abs{\Hat{X}^{n}(s)}^{m}\,\D{s}  \biggr]
+ c_6 \Exp \biggl[ \int_0^t \abs{\Hat{Q}^{n}}^{m}\,\D{s}\biggr]\,.
\end{equation*}
Dividing by $t$ and taking limits as $t\to\infty$, establishes
\eqref{EL8.1A}.
\end{proof}

We are now ready to prove Theorem~\ref{T-lbound}.

\begin{proof}[Proof of Theorem~\ref{T-lbound}]

Let $Z^{n}\in\mathfrak{Z}^{n}$, $n\in\NN$, be an arbitrary sequence of
scheduling policies in $\boldsymbol\fZ$, and
let $\Phi^{n}\df\Phi^{Z^{n}}$ as defined in \eqref{E-emp}.
Without loss of generality we assume that along some increasing
sequence $\{n_{k}\}\subset\NN$, we have
$\sup_{k}\,J\bigl(\Hat{X}^{n_{k}}(0),Z^{n_{k}}\bigr)<\infty$;
otherwise there is nothing to prove.
By Lemmas~\ref{L7.1} and \ref{L8.1},  the sequence of mean empirical measures
$\{\Phi^{n_{k}}_{T}\,\colon\, T>0, \;k \ge 1 \}$ is tight
and any subsequential limit as $(n_{k},T)\to\infty$ is in $\eom$.
Select any subsequence $\{T_{k},n'_{k}\}\subset\RR_{+}\times\{n_{k}\}$,
with $T_{k}\to\infty$, as $k\to\infty$, and such that
\begin{equation*}
J\bigl(\Hat{X}^{n'_{k}}(0),Z^{n'_{k}}\bigr) \;\le\; \frac{1}{k}
+\liminf_{\ell\to\infty}\;J\bigl(\Hat{X}^{n_{\ell}}(0),Z^{n_{\ell}}\bigr)\,,
\end{equation*}
and
\begin{equation*}
\int_{\RR^{2}\times\Act} r(x,u)\,\Phi^{n'_{k}}_{T_{k}} (\D{x}, \D{u})\;\le\;
J\bigl(\Hat{X}^{n'_{k}}(0),Z^{n'_{k}}\bigr)+ \frac{1}{k}\,,
\end{equation*}
for all $k\in\NN$,
and extract any further subsequence, also denoted as $\{T_{k},n'_{k}\}$,
along which $\Phi^{n'_{k}}_{T_{k}}\to\Hat\uppi\in\eom$.
Since $r$ is nonnegative, taking limits as $k\to\infty$ we obtain
\begin{equation*}
\liminf_{k\to\infty}\;J\bigl(\Hat{X}^{n_{k}}(0),Z^{n_{k}}\bigr)
\;\ge\;  \Hat\uppi (r)\;\ge\;\varrho^{*}\,.
\end{equation*} 
This proves part~(i).

We next show the lower bound (ii) for the constrained problem.
Repeating the same argument as in part~(i),
suppose that 
$\sup_{k}\,J_{\mathsf{o}}\bigl(\Hat{X}^{n_{k}}(0),Z^{n_{k}}\bigr)<\infty$
along some increasing
sequence $\{n_{k}\}\subset\NN$.
As in the proof of part~(i),
let $\Hat\uppi\in\cP(\RR^2 \times \Act)$ be a limit of
$\Phi^{n}_{T}$ as $(n,T)\to\infty$. 
Recall the definition of $r_{j}$ in \eqref{E-rj}.
Since $r_{j}$ is bounded below, taking limits, we obtain
$\Hat\uppi(r_{j})\;\le\; \updelta_j$, $j=1,2$.
Therefore $\Hat\uppi\in\sH(\updelta)$, and by optimality we must have
$\Hat\uppi(r_{\mathsf{o}})\ge\varrho^{*}_{\mathsf{c}}$.
Similarly, we obtain,
\begin{equation*}
\liminf_{k\to\infty}\;J_{\mathsf{o}}\bigl(\Hat{X}^{n_{k}}(0),Z^{n_{k}}\bigr)
\;\ge\;\Hat\uppi(r_{\mathsf{o}})\;\ge\;\varrho^{*}_{\mathsf{c}} \,.
\end{equation*}
This proves part~(ii). 

The result in part~(iii) for the fairness problem
follows along the same lines as part~(ii).
With $\Hat{\uppi}$ as in part~(ii), we have
\begin{equation}\label{PT5.1A}
\liminf_{k\to\infty}\;J_{\mathsf{o}}\bigl(\Hat{X}^{n_{k}}(0),Z^{n_{k}}\bigr)
\;\ge\;\Hat\uppi(r_{\mathsf{o}})\,.
\end{equation}
The uniform integrability of
\begin{equation*}
\frac{1}{T}\;
\Exp^{Z^n} \left[\int_{0}^{T}
\bigl(\Hat{Y}^{n}_j(s)\bigr)^{\Tilde{m}}\,\D{s}\right]\,,\qquad j=1,2\,,
\end{equation*}
which follows by \eqref{E-apriori} and the assumption that $\Tilde{m}<m$,
together with \eqref{ET5.1A}, imply that
\begin{equation*}
(\uptheta-\epsilon) \Hat\uppi(r_2)\;\le\; \Hat\uppi(r_1)
\;\le\; (\uptheta+\epsilon) \Hat\uppi(r_2)\,.
\end{equation*}
Therefore, $\Hat\uppi(r_1)=\Tilde\uptheta(\epsilon)\Hat\uppi(r_2)$
for some $\Tilde\uptheta(\epsilon)$ satisfying
$\abs{\Tilde\uptheta(\epsilon)-\uptheta}\le\epsilon$.
Let
\begin{equation*}
\Tilde\varrho\;\df\;  \inf_{\uppi\,\in\,\sH_{\mathsf{f}}(\Tilde\uptheta(\epsilon))}\;
\uppi(r_{\mathsf{o}})\,,
\end{equation*}
and $\uplambda^*$ denote the Lagrange multiplier for the problem
in Theorem~\ref{T4.3}.
It is clear that $\Hat\uppi(r_{\mathsf{o}})\ge \Tilde\varrho$.
Writing $\Hat\uppi(r_1)=\Tilde\uptheta(\epsilon)\Hat\uppi(r_2)$
as $\Hat\uppi(r_1)-\uptheta\Hat\uppi(r_2)=
\bigl(\Tilde\uptheta(\epsilon)-\uptheta)\Hat\uppi(r_2)$,
we obtain by \cite[Theorem~1, p.~222]{Luenberger} that
\begin{align}\label{PT5.1B}
\varrho_{\mathsf{f}}^{*}-\Tilde\varrho
&\;\le\; \babs{\uplambda^*\bigl(\Tilde\uptheta(\epsilon)-\uptheta)\Hat\uppi(r_2)}
\nonumber\\[5pt]
&\;\le\; \varepsilon\,\babs{\uplambda^*\Hat\uppi(r_2)} \,.
\end{align}
Without loss of generality, we may assume that
$\Hat\uppi(r_{\mathsf{o}})\le \varrho_{\mathsf{f}}^{*}$; otherwise
\eqref{ET5.1B} trivially follows by \eqref{PT5.1A}.
By \eqref{E-apriori} and Jensen's inequality we have
\begin{align}\label{PT5.1C}
\Hat\uppi(r_2)&\;\le\;
\Hat\kappa \bigl(1 + \uppi(r_{\mathsf{o}})^{\nicefrac{\Tilde{m}}{m}}\bigr)
\nonumber\\[5pt]
&\;\le\;\Hat\kappa \bigl(1 + (\varrho_{\mathsf{f}}^{*})^{\nicefrac{\Tilde{m}}{m}}\bigr)
\end{align}
for some constant $\Hat\kappa$.
Therefore combining \eqref{PT5.1B}--\eqref{PT5.1C}, we obtain
\begin{align*}
\Hat\uppi(r_{\mathsf{o}}) &\;\ge\; \Tilde\varrho\\[5pt]
&\;\ge\; \varrho_{\mathsf{f}}^{*}-\epsilon\,\abs{\uplambda^*}\Hat\kappa
\bigl(1 + (\varrho_{\mathsf{f}}^{*})^{\nicefrac{\Tilde{m}}{m}}\bigr)\,,
\end{align*}
and \eqref{ET5.1B} follows by this estimate and \eqref{PT5.1A}.
This completes the proof.
\end{proof}

\section{Proof of the upper bounds}\label{S9}

In this section, we prove the upper bounds in
Theorem~\ref{T-ubound}. We need the following lemma. 

\begin{lemma}\label{L9.1}
Let $\Lyap_{k,\beta}$ be as in \eqref{Lyapk}.
Suppose $v\in\Ussm$ is such that for some
positive constants $C_1$, $C_2$, $\beta$ and $k\ge2$, it holds that
\begin{equation*} \label{EL6.4a}
\Lg^{v} \Lyap_{k,\beta}(x) \;\le\; C_1  - C_2\, \Lyap_{k,\beta}(x)\qquad
\forall x\in\RR^2\,.
\end{equation*} 
Let $\Hat{X}^{n}$ denote the diffusion-scaled state process under the
scheduling policy $z^{n}[v]$ in Definition~\ref{D7.1}, and
$\widehat\cL_n$ be its generator.
Then, there exists $n_{0}\in\NN$ such that
\begin{equation*}
\widehat\cL_n \Lyap_{k,\beta}(\Hat{x}) \;\le\;
C'_1  - C'_2 \,\Lyap_{k,\beta}(\Hat{x})\qquad
\forall \Hat{x}\in\Breve\sS^n\,,
\end{equation*} 
for some positive constants $C'_1$ and $C'_2$, and for all $n\ge n_0$.
\end{lemma}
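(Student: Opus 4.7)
The plan is to Taylor-expand $\widehat\cL_n\Lyap_{k,\beta}$ and compare it with $\Lg^v\Lyap_{k,\beta}$. Following the computation that led to \eqref{EL7.1C} in the proof of Lemma~\ref{L7.1}, one writes
\[
\widehat\cL_n\Lyap_{k,\beta}(\Hat{x}) \;=\;
\sum_{i=1}^{2}\Bigl[\cA^n_{i,1}\bigl(\Hat{x},u^n[v](\Hat{x})\bigr)\,\partial_i\Lyap_{k,\beta}(\Hat{x})
+\cA^n_{i,2}\bigl(\Hat{x},u^n[v](\Hat{x})\bigr)\,\partial_{ii}\Lyap_{k,\beta}(\Hat{x})\Bigr]
+ R_n(\Hat{x}),
\]
where $u^n[v]=(u^{c,n}[v],u^{s,n}[v])$ is the control induced by $z^n[v]$ as in Lemma~\ref{L7.2}, and $R_n$ is the third-order Taylor remainder, analogous to the $\mathscr{D}$ term in \eqref{EL7.1A}. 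The goal is to prove
\[
\babs{\widehat\cL_n\Lyap_{k,\beta}(\Hat{x}) - \Lg^v\Lyap_{k,\beta}(\Hat{x})}
\;\le\; \epsilon(n)\,\Lyap_{k,\beta}(\Hat{x}) + M \qquad \forall\,\Hat{x}\in\Breve\sS^n,
\]
for some $\epsilon(n)\downarrow 0$ and constant $M$, which combined with the hypothesis yields the claim by choosing $n_0$ so that $\epsilon(n_0)\le C_2/2$ and setting $C_1'=C_1+M$, $C_2'=C_2/2$.

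The error is controlled in three pieces. First, parameter convergence in Assumption~\ref{as-para} gives $|\cA^n_{i,1}(\Hat{x},u)-b_i(\Hat{x},u)|=O((1+|\Hat{x}|)/\sqrt{n})$ and $|\cA^n_{i,2}(\Hat{x},u)-\lambda_i|=O(1+|\Hat{x}|/\sqrt{n})$, uniformly in $u\in\Act$. Second, the rounding built into Definition~\ref{D7.1} produces a control gap, but because the drift $b(\Hat{x},u)$ in \eqref{diff-drift} depends linearly on $u^c,u^s$ weighted by $(e\cdot\Hat{x})^\pm$, and the absolute rounding error in $(e\cdot\Hat{x})^\pm u$ is $O(1/\sqrt{n})$ by construction, the resulting drift error $|b(\Hat{x},u^n[v](\Hat{x}))-b(\Hat{x},v(\Hat{x}))|$ is uniformly $O(1/\sqrt{n})$---crucially, this bound holds even near $e\cdot\Hat{x}=0$, where the relative gap $|u^n[v]-v|$ may be large but the product with $(e\cdot\Hat{x})^\pm$ stays small. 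Third, $R_n$ is estimated as in the proof of Lemma~\ref{L8.1}: using $|\partial_{iii}\Lyap_{k,\beta}|\le C(1+|\Hat{x}|^{k-3})$, jumps of size $n^{-\nicefrac{1}{2}}$, and aggregate jump intensity of order $n+\sqrt{n}|\Hat{x}|$ (cf.~\eqref{EL8.1J}), one obtains $|R_n(\Hat{x})|\le C n^{-\nicefrac{1}{2}}(1+|\Hat{x}|^{k-1})$. Multiplying by the bounds $|\partial_i\Lyap_{k,\beta}|\le C|\Hat{x}|^{k-1}$ and $|\partial_{ii}\Lyap_{k,\beta}|\le C(1+|\Hat{x}|^{k-2})$, and applying Young's inequality to absorb subleading terms into $\epsilon\,\Lyap_{k,\beta}(\Hat{x})+C(\epsilon)$, every contribution is dominated by $\epsilon(n)\Lyap_{k,\beta}(\Hat{x})+M$.

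The main obstacle is that $\Breve\sS^n$ has $\Hat{x}$-radius of order $\sqrt{n}$, so convergence on fixed compact sets is insufficient; uniform control over the expanding region is required. The saving grace is that every error scales as $n^{-\nicefrac{1}{2}}$ times a polynomial in $|\Hat{x}|$ of degree at most $k-1$, so even at the boundary $|\Hat{x}|\asymp\sqrt{n}$ the ratio to $\Lyap_{k,\beta}\asymp|\Hat{x}|^k$ tends to zero. A further subtlety is that the lemma does not assume continuity of $v$; this is handled by exploiting the linear dependence of $b$ on $u$ (rather than any pointwise approximation of $v$ by $u^n[v]$), which is what allows the discretization error to be controlled purely through the structural bound $(e\cdot\Hat{x})^\pm|u^n[v]-v|=O(1/\sqrt{n})$ noted above.
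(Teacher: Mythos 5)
Your proof is correct and takes the same approach as the paper's: expand the generator, identify the drift contribution with $\Lg^{v}\Lyap_{k,\beta}$ up to errors that can be absorbed into $\epsilon\,\Lyap_{k,\beta}+M$, and control the curvature and third-order remainder terms by Young's inequality, which is exactly what the paper accomplishes by reusing the $F^{(1)}_n/F^{(2)}_n$ decomposition from the proof of Proposition~\ref{P3.1} and asserting the bound on $F^{(2)}_n$ ``since the control $v$ satisfies the hypothesis and $x\in\sX^n$.'' Your explicit observation that the linear dependence of the drift $b$ on $(e\cdot x)^{\pm}u$, rather than on $u$ alone, makes the rounding error from Definition~\ref{D7.1} uniformly $O(n^{-\nicefrac{1}{2}})$ without requiring continuity of $v$ is the detail that justifies the paper's terse treatment of $F^{(2)}_n$; the only cosmetic imprecision is that the parameter-convergence error should be stated as $\epsilon(n)(1+|\Hat{x}|)$ with $\epsilon(n)\to 0$ rather than $O\bigl((1+|\Hat{x}|)/\sqrt{n}\bigr)$, since Assumption~\ref{as-para} specifies no rate for $\gamma_i^n\to\gamma_i$, but this does not affect the argument.
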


\begin{proof}
See Appendix~\ref{App1}.
\end{proof}

\begin{proof}[Proof of Theorem~\ref{T-ubound}]
We first prove part~(i) for the unconstrained problem.
Recall the definition in \eqref{Lyapk}.
Let $k=m+1$.
By Theorems~5.5 in \citet{AP15} and Lemma~\ref{L4.1},
there exists a continuous precise
control $v_\epsilon\in\Ussm$ which is $\epsilon$-optimal for (P1$'$) 
and satisfies
\begin{equation}\label{PT5.2A}
\Lg^{v_\epsilon} \Lyap_{k,\beta}(x) \;\le\;
c_1 - c_2 \,\Lyap_{k,\beta}(x)\qquad \forall x\in \RR^2\,,
\end{equation}
for any $\beta\ge\Bar\beta$ defined in \eqref{E-barbeta}, and
for some positive constants $c_1$, $c_2$ which depend on $\beta$.
Recall Definition~\ref{D2.1}.
The scheduling policy that we apply to the $n^\text{th}$ system is as follows:
Inside the ball $n \Breve{B}$ we apply the Markov
policy in Definition~\ref{D7.1} $z^n[v_\epsilon]$,
while outside this ball we apply the Markov policy $\Check{z}^n$ in
Definition~\ref{D3.1}.
Let $Z^n$ denote this concatenated policy.
By Proposition~\ref{P3.1} and Lemma~\ref{L9.1}
there exist positive constants $C_1$, $C_2$, $\beta$, and $n_0\in\NN$, such that
\begin{equation} \label{PT5.2B}
\widehat\cL_n^{Z^n} \Lyap_{k,\beta}(\Hat{x}) \;\le\;
C_1  - C_2\, \Lyap_{k,\beta}(\Hat{x})\qquad
\forall \Hat{x}\in\sS^n\,,\quad\forall\, n\ge n_0\,.
\end{equation} 
Let $\Tilde\Phi^n_T\equiv \Tilde\Phi^{Z^n}_{T}$ as defined in \eqref{E-emp2}.
We define
\begin{align*}
\Hat{q}(\Hat{x}) &\;\df\; \bigl(\Hat{x}_1 - Z^n_{11}(\Hat{x})- Z^n_{12}(\Hat{x}),
\Hat{x}_2 - Z^n_{22}(\Hat{x})\bigr)\,,\\[3pt]
\Hat{y}(\Hat{x}) &\;\df\; \bigl( - Z^n_{11}(\Hat{x})- Z^n_{12}(\Hat{x}),
 - Z^n_{22}(\Hat{x})\bigr)\,,
\end{align*}
By \eqref{PT5.2B}
we have $\sup_{n\ge n_0}\,J(\Hat{X}^n(0),Z^n)<\infty$,
and by  Birkhoff's ergodic theorem for each $n\ge n_0$ there exists $T_n\in\RR_+$,
such that
\begin{equation}\label{PT5.2C}
\babss{\int_{\RR^2\times\Act}
\Hat{r}\bigl( (e\cdot\Hat{q}(\Hat{x})\bigr)^+\,u^c,\,
(e\cdot\Hat{y}(\Hat{x})\bigr)^+\,u^s\bigr)\,
\Tilde\Phi^n_T(\D{\Hat{x}},\D{u})-J(\Hat{X}^n(0),Z^n)}\;\le\; \frac{1}{n}\,,
\end{equation}
for all $ T\ge T_{n}$ and $n\ge n_{0}$.
By \eqref{PT5.2B} the sequence $\{T_n\}$ can be selected so as to
also satisfy
\begin{equation}\label{PT5.2D}
\sup_{n\ge n_0}\;\sup_{T\ge T_n}\;
\int_{\RR^2\times\Act} \Lyap_{k,\beta}
(\Hat{x})\,\Tilde\Phi^n_T(\D{\Hat{x}},\D{u})\;<\;\infty\,.
\end{equation}
Without loss of generality we assume that $T_n\to\infty$.
Hence, by uniform integrability which is implied by \eqref{PT5.2D},
together with \eqref{PT5.2C}
for any $\eta>0$ there exists a ball $B_{\eta}$  such that
\begin{equation}\label{PT5.2E}
\babss{\int_{B_{\eta}\times\Act}
\Hat{r}\bigl( (e\cdot\Hat{q}(\Hat{x})\bigr)^+\,u^c,\,
(e\cdot\Hat{y}(\Hat{x})\bigr)^+\,u^s\bigr)\,
\Tilde\Phi^n_T(\D{\Hat{x}},\D{u})-J(\Hat{X}^n(0),Z^n)}\;\le\; \frac{1}{n}+\eta \,,
\end{equation}
for all $ T\ge T_{n}$ and  $n\ge n_{0}$.

By JWC on $\{\Hat{x}\in\sqrt{n}\Breve{B}\}$, we have
$(e\cdot\Hat{q}(\Hat{x})\bigr)^+ = (e\cdot\Hat{x})^+$ and
$(e\cdot\Hat{y}(\Hat{x})\bigr)^+ = (e\cdot\Hat{x})^-$
for all $\Hat{x}\in B_{\eta}$, and for all large
enough $n$ by Corollary~\ref{C7.1}.
On the other hand,  $\Tilde\Phi^n_T$ converges,
as $(n,T)\to\infty$, to $\uppi_{v_\epsilon}$ in
$\cP(\RR^2\times\Act)$ by Lemma~\ref{L7.2}.
Therefore
\begin{equation}\label{PT5.2F}
\int_{B_{\eta}\times\Act}
\Hat{r}\bigl( (e\cdot\Hat{q}(\Hat{x})\bigr)^+\,u^c,\,
(e\cdot\Hat{y}(\Hat{x})\bigr)^+\,u^s\bigr)\,\Tilde\Phi^n_{T_n}(\D{\Hat{x}},\D{u})
\;\xrightarrow[n\to\infty]{}\;
\int_{B_{\eta}\times\Act} r(x,u)\,\uppi_{v_\epsilon}(\D{x},\D{u})\,.
\end{equation}
By \eqref{PT5.2E}--\eqref{PT5.2F} we obtain
\begin{equation*}
\limsup_{n\to\infty}\;J(\Hat{X}^n(0),Z^n) \;\le\; \varrho^{*} + \epsilon +\eta\,.
\end{equation*}
Since $\eta$ and $\epsilon$ are arbitrary, this completes the proof
of part (i).

We next show the upper bound for the constrained problem.
Let 
$\epsilon>0$ be given.
By Theorem~5.7 in \cite{AP15} and Lemma~\ref{L4.1}, there exists a continuous
precise control $v_{\epsilon} \in \Ussm$ and constants
$\updelta^{\epsilon}_j<\updelta_j$, $j=1,2$, satisfying
$\uppi_{v_{\epsilon}}(r_{\mathsf{o}})\le \varrho^*_{\mathsf{c}}+\epsilon$,
and $\uppi_{v_{\epsilon}}(r_j)\le \updelta^{\epsilon}_j$, $j=1,2$,
and \eqref{PT5.2A} holds.
Let $Z^n$ be the Markov policy constructed in part (i) by concatenating
$z^n[v_\epsilon]$ and $\Check{z}^n$.
Following the proof of part (i) and choosing $\eta$ small enough,
i.e., $\eta< \epsilon\wedge\frac{1}{2}\min(\updelta_j-\updelta^{\epsilon}_j,\;j=1,2)$,
we obtain
\begin{align*}
\limsup_{n\to\infty}\;J_{\mathsf{o}}(\Hat{X}^n(0),Z^n) &\;\le\;
\varrho^{*}_{\mathsf{c}} + 2\epsilon\,,\\[5pt]
\limsup_{n\to\infty}\;J_{\mathsf{c},j}\bigl(\Hat{X}^{n}(0), Z^{n}\bigr)
&\;\le\; \frac{1}{2}(\updelta_j+\updelta^{\epsilon}_j) \,, \quad j =1,2\,.
\end{align*}
This completes the proof of part (ii).

The proof of the upper bound for the fairness problem
is analogous to part (ii).
By Theorem~5.7 and Remark~5.1 in \cite{AP15},
for any $\epsilon>0$,
there exists a continuous
precise control $v_{\epsilon} \in \Ussm$ for (P3$'$)
satisfying
\begin{equation}\label{PT5.2G}
\uppi_{v_{\epsilon}}(r_{\mathsf{o}})\;\le\; \varrho^*_{\mathsf{f}} +\epsilon\,,
\qquad\text{and}\quad
\uppi_{v_{\epsilon}}(r_{1})\;=\;\uptheta\,\uppi_{v_{\epsilon}}(r_{2})\,.
\end{equation}
Since $\{\uppi_{v_{\epsilon}}\,,\;\epsilon\in(0,1)\}$ is tight,
and $(e\cdot x)^-$ is strictly positive on an open subset of $B_1$, it
follows by the Harnack inequality for the density of the invariant probability
measure of the diffusion that
\begin{equation}\label{PT5.2H} % used
\inf_{\epsilon\in(0,1)}\; \uppi_{v_{\epsilon}}(r_{2})>0\,.
\end{equation}
Arguing as in part (ii), we obtain
\begin{equation}\label{PT5.2I}
\begin{split}
\limsup_{n\to\infty}\;J_{\mathsf{o}}(\Hat{X}^n(0),Z^n) &\;\le\;
\varrho^{*}_{\mathsf{f}} + \epsilon\,,\\[5pt]
\lim_{n\to\infty}\;
J_{\mathsf{c},j}\bigl(\Hat{X}^{n}(0), Z^{n}\bigr)
&\;=\; \uppi_{v_{\epsilon}}(r_{j})\,,\qquad j \;=\;1,2\,.
\end{split}
\end{equation}
The result then follows by \eqref{PT5.2G}--\eqref{PT5.2I}, thus
completing the proof.
\end{proof}

\smallskip
%%%%%%%%%%%%%%%%%%%%%%%%%%%%%%%%%%%%%%%%%%%%%%%%%%%%%%%%%%%%%%%%%%%%%
\section{Conclusion}

We have proved asymptotic optimality for the N-network in the Halfin-Whitt regime.
The analysis results in a good understanding of the stability of
the diffusion-scaled state processes under certain scheduling policies and
the convergence properties of the associated mean empirical measures. 
The state-dependent priority scheduling policy constructed
not only gives us a better understanding of the N-network,
but also plays a key role in proving the upper bound.
In addition we have identified some important properties
of the diffusion-scaled state processes that concern
existence of moments,
and the convergence of the mean empirical measures.
The methodology we followed should help to establish
asymptotic optimality for more general
multiclass multi-pool networks in the Halfin-Whitt regime.
If this is done, it will nicely complement the results on ergodic control
of the limiting controlled diffusion in \citet{AP15}. 
%For more general networks, the main difficulty lies in discovering simple
%(state-dependent) scheduling policies under which the diffusion-scaled state process
%is geometrically stable.

\smallskip
%%%%%%%%%%%%%%%%%%%%%%%%%%%%%%%%%%%%%%%%%%%%%%%%%%%%%%%%%%%%%%%%%%%%%
\appendix
\section{Proofs of Proposition~\ref{P3.1}
and Lemma~\ref{L9.1}}\label{App1}

In these proofs we use the fact that
the quantities
\begin{equation*}
\lambda_1^{n} - \mu_{11}^{n} N_1^n - \mu_{12}^n N_{12}^n\,,~
nx_1^* - N_1^n - N_{22}^n\,,~
\lambda_2^{n} -\mu_{22}^{n}  N_{22}^n\,,~
n x_2^*  -  N_{22}^n\,,~\text{and}~ \lambda_2^{n}  - \mu_{22}^{n} n x_2^*\,, 
\end{equation*}
are in $\order(\sqrt n)$.
This is straightforward to verify using Assumption~\ref{as-para}.

\begin{proof}[Proof of Proposition~\ref{P3.1}]
Simplifying the notation in Definition~\ref{D3.1}
we let $z^n=\Check{z}^n$, and analogously for $\Check{y}^n$ and
$\Check{q}^n$.  Fix $k > 2$.

Under the scheduling policy in Definition~\ref{D3.1}, 
the resulting process $X^{n}$ is Markov with generator
\begin{multline} \label{PP3.1a} 
\cL_n^{\Check{z}^{n}} f(x) \;\df\; \sum_{i=1}^2 \lambda^{n}_i \bigl(f(x+e_i) - f(x)\bigr)
+ ( \mu_{11}^{n} z^{n}_{11} + \mu_{12}^{n} z^{n}_{12}) \bigl(f(x-e_1)
- f(x)\bigr)  \\
+  \mu_{22}^{n} z^{n}_{22} \bigl(f(x-e_2) - f(x)\bigr)
+ \sum_{i=1}^2 \gamma_i^{n} q^{n}_i
\bigl(f(x-e_i)- f(x)\bigr)\,, \qquad   x \in \ZZ^{2}_{+}\,,
\end{multline}

Recall the definition of $\Hat{x}$ in \eqref{Td-xn}.
Define 
\begin{equation*}
f_n(x)\;\df\;  \abs{x_1 - n x_{1}^{*}}^k
+\beta \abs{x_2 - n x_{2}^{*}}^k
\;=\;  n^{\nicefrac{k}{2}} \bigl(\abs{\Hat{x}_1}^k+\beta \abs{\Hat{x}_2}^k\bigr) \,, 
\end{equation*}
for some positive constant $\beta$, to be determined later. 
If we show that 
\begin{equation} \label{PP3.1b}
\cL_n^{\Check{z}^{n}} f_n(x) \;\le\; C_1 n^{\nicefrac{k}{2}} - C_2 f_n(x)\,,
\quad x \in \ZZ^{2}_{+}\,,
\end{equation}
for some positive constants $C_1$ and $C_2$, and for all $n\ge n_0$, then
by using \eqref{E-gen2} we obtain  \eqref{Lya-GE}.

Given \eqref{PP3.1b}, we easily obtain that 
\begin{align*}
\Exp \left[f_n(X^{n}(T))\right] - f_n(X^{n}(0))  &\;=\;
\Exp \left[ \int_{0}^{T} \cL_n f_n(X^{n}(s))\,\D{s} \right] \\[5pt]
&\;\le\; C_1 n^{\nicefrac{k}{2}} T
- C_2 \Exp \left[ \int_{0}^{T} f_n(X^{n}(s))\,\D{s} \right] \,,
\end{align*}
which implies that 
\begin{equation*}
\frac{1}{T}\;\Exp \left[ \int_{0}^{T}
\Lyap_{k,\beta}\bigl(\Hat{X}^{n}(s)\bigr)\,\D{s}\right] \;\le\;
C_1 + \frac{1}{T} \Lyap_{k,\beta}\bigl(\Hat{X}^{n}(0)\bigr)
- \frac{1}{T}\;\Exp \bigl[ \Lyap_{k,\beta}\bigl(\Hat{X}^{n}(T)\bigr) \bigr] \,. 
\end{equation*}
By letting $T\to \infty$,
this implies that \eqref{upper-p1} holds.

We now focus on proving \eqref{PP3.1b}. Note that 
\begin{equation*}
(a\pm 1)^k - a^k\;=\; k a^{k-1} + \order(a^{k-2})\,, \quad a \in \RR \,.  
\end{equation*}
Recall $\Tilde{x}$ in \eqref{Td-xn}.  
Then by \eqref{PP3.1a}, we have
\begin{align*} 
\cL_n^{\Check{z}^{n}} f_n(x) &\;=\;   \lambda^{n}_1
\bigl(k \Tilde{x}_1 \,\abs{\Tilde{x}_1}^{k-2}
+ \order \bigl(\abs{\Tilde{x}_1}^{k-2}\bigr)\bigr)
+\beta  \lambda^{n}_2
\bigl(k \Tilde{x}_2 \,\abs{\Tilde{x}_2}^{k-2}
+ \order \bigl(\abs{\Tilde{x}_2}^{k-2}\bigr)\bigr)
\nonumber \\[5pt]
&\mspace{50mu} +
(\mu_{11}^{n} z^{n}_{11} + \mu_{12}^{n} z^{n}_{12}) \,\bigl( -k \Tilde{x}_1 \,
\abs{\Tilde{x}_1}^{k-2} 
+ \order \bigl(\abs{\Tilde{x}_1}^{k-2}\bigr) \bigr) \nonumber\\[5pt]
&\mspace{100mu}
+  \beta \mu_{22}^{n}z^{n}_{22}\,
\bigl( -k \Tilde{x}_2 \,\abs{\Tilde{x}_2}^{k-2} 
+ \order \bigl(\abs{\Tilde{x}_2}^{k-2}\bigr) \bigr) \nonumber\\[5pt]
&\mspace{150mu}
+\gamma_1^{n} q^{n}_1\,
\bigl( - k \Tilde{x}_1 \,\abs{\Tilde{x}_1}^{k-2}
+ \order\bigl(\abs{\Tilde{x}_1}^{k-2}\bigr)\bigr)\nonumber\\[5pt]
&\mspace{200mu}
+\beta \gamma_2^{n} q^{n}_2\,
\bigl( - k \Tilde{x}_2 \,\abs{\Tilde{x}_2}^{k-2}
+ \order\bigl(\abs{\Tilde{x}_2}^{k-2}\bigr)\bigr)\,. 
\end{align*}

Let
\begin{multline} \label{E-Fn1}
F_n^{(1)}(x) \;\df\;  \bigl(\lambda_1^{n}
+ \gamma^{n}_1 q^{n}_1 \bigr) \order\bigl(\abs{\Tilde{x}_1}^{k-2} \bigr)
+\beta\bigl(\lambda_2^{n}
+ \gamma^{n}_2 q^{n}_2 \bigr) \order\bigl(\abs{\Tilde{x}_2}^{k-2} \bigr) \\
 +  \bigl(\mu_{11}^{n} z^{n}_{11}  + \mu_{12}^{n} z^{n}_{12} \bigr)
\order\bigl(\abs{\Tilde{x}_1}^{k-2} \bigr) 
+  \beta\mu_{22}^{n}z^{n}_{22} 
\order\bigl(\abs{\Tilde{x}_2}^{k-2} \bigr) \,, 
\end{multline}
and
\begin{multline} \label{E-Fn2}
F_n^{(2)}(x) \;\df\;    k \Tilde{x}_1 \,
\abs{\Tilde{x}_1}^{k-2}\bigl(\lambda_1^{n} - \gamma^{n}_1 q^{n}_1 \bigr)
+ \beta k \Tilde{x}_2 \,
\abs{\Tilde{x}_2}^{k-2}\bigl(\lambda_2^{n} - \gamma^{n}_2 q^{n}_2 \bigr)\\
- k \Tilde{x}_1 \,\abs{\Tilde{x}_1}^{k-2}
( \mu_{11}^{n} z^{n}_{11} + \mu_{12}^{n} z^{n}_{12}) 
-  \beta k \Tilde{x}_2 \,
\abs{\Tilde{x}_2}^{k-2} \mu_{22}^{n} z^{n}_{22}  \,. 
\end{multline}
Then 
$$
\cL_n^{\Check{z}^{n}} f_n(x)  \;=\; F_n^{(1)}(x) + F_n^{(2)}(x)\,. 
$$

We first study $F_n^{(1)}(x)$. 
It is easy to
observe that for each $i =1,2$ and $j =1,2$, 
\begin{equation} \label{PP3.1c}
z^{n}_{ij}  \;\le\; x_i\,,
\quad \text{and} \quad  q^{n}_i \;\le\; x_i\,.
\end{equation}
Thus, we obtain
\begin{align} \label{PP3.1d}
F_n^{(1)}(x) & \le\;  \bigl(\lambda_1^{n}
+ \gamma^{n}_1 x_1 \bigr) \order\bigl(\abs{\Tilde{x}_1}^{k-2} \bigr)
+\beta \bigl(\lambda_2^{n}
+ \gamma^{n}_2 x_2 \bigr) \order\bigl(\abs{\Tilde{x}_2}^{k-2} \bigr)
\nonumber \\
& \mspace{50mu} +  \bigl(\mu_{11}^{n}   + \mu_{12}^{n}  \bigr)x_{1}
\order\bigl(\abs{\Tilde{x}_1}^{k-2} \bigr)
+  \beta \mu_{22}^{n}x_{2} 
\order\bigl(\abs{\Tilde{x}_2}^{k-2} \bigr)  \nonumber \\[5pt]
&= \;  \bigl(\lambda_1^{n} + \gamma^{n}_1 (n x_{1}^{*} + \Tilde{x}_1) \bigr)
\order\bigl(\abs{\Tilde{x}_1}^{k-2} \bigr)
+ \beta \bigl(\lambda_2^{n} + \gamma^{n}_2 (n x_{2}^{*} + \Tilde{x}_2) \bigr)
\order\bigl(\abs{\Tilde{x}_2}^{k-2} \bigr)  \nonumber \\
& \mspace{50mu}
+\bigl(\mu_{11}^{n} + \mu_{12}^{n}  \bigr)(n x_{1}^{*} + \Tilde{x}_1)
\order\bigl(\abs{\Tilde{x}_1}^{k-2} \bigr)
+  \beta \mu_{22}^{n} (n x_{2}^{*} + \Tilde{x}_2)
\order\bigl(\abs{\Tilde{x}_2}^{k-2} \bigr)  \nonumber \\[5pt]
&\le\;  \sum_{i=1}^2  \Bigl(\order(n) \order\bigl(\abs{\Tilde{x}_i}^{k-2}\bigr)
+ \order\bigl(\abs{\Tilde{x}_i}^{k-1} \bigr) \Bigr)\,,
\end{align}
where the last inequality follows from Assumption~\ref{as-para}.

We next focus on $F_n^{(2)}(x)$. 
We consider four cases:

\smallskip\noindent
\emph{Case~1\/}: $x_1\ge N_1^n+ N_{12}^n$ and  $x_2\ge N_{22}^n$.
Then
\begin{equation*}
z^n_{11} =  N_1^n\,,  \quad z^n_{12} =N_{12}^n\,,
\quad z^n_{22} = N_{22}^n\,, \quad q_1^n=x_1-N_1^n-N_{12}^n\,,\quad
 q_2^n=x_2 - N_{22}^n\,.
\end{equation*}

We obtain
\begin{align}\label{EC1}
F_n^{(2)}(x) 
& \;=\;k  \Tilde{x}_1 \,\abs{\Tilde{x}_1}^{k-2}
\bigl[\lambda_1^{n} - \mu_{11}^{n}  N_1^n - \mu_{12}^{n} N_{12}^n
- \gamma^{n}_1 (nx_1^*-N_1^n- N_{12}^n) \bigr]  \nonumber\\[3pt]
& \mspace{60mu} + 
\beta k  \Tilde{x}_2 \,\abs{\Tilde{x}_2}^{k-2}
\bigl[\lambda_2^{n}  -  \mu_{22}^{n}  N_{22}^n
- \gamma^{n}_2 (  nx^*_2 -   N_{22}^n)\bigr]    
- k  \gamma^{n}_1 \abs{\Tilde{x}_1}^{k} 
- \beta k  \gamma^{n}_2 \abs{\Tilde{x}_2}^{k} \nonumber \\[5pt]
& \;=\; \order(\sqrt n)\,
\bigl(\abs{\Tilde{x}_1}^{k-1}+\beta\abs{\Tilde{x}_2}^{k-1}\bigr)
- k \gamma^{n}_1 \abs{\Tilde{x}_1}^{k} 
- \beta k  \gamma^{n}_2 \abs{\Tilde{x}_2}^{k}\,.
\end{align}

\smallskip\noindent
\emph{Case~2\/}: $x_1< N_1^n+ N_{12}^n$ and  $x_2< N_{22}^n$.
Consider two subcases:

\smallskip
\emph{Case~2.1\/}: $x_1 >N_1^n$.
Then
\begin{equation*}
z^n_{11} =  N_1^n\,,  \quad z^n_{12} =x_1-N_1^n\,,
\quad z^n_{22} = x_2\,, \quad q_1^n=q_2^n=0\,.
\end{equation*}

We have 
\begin{align}\label{EC2.1}
F_n^{(2)}(x) 
& \;=\;k  \Tilde{x}_1 \,\abs{\Tilde{x}_1}^{k-2}
\bigl[\lambda_1^{n} - \mu_{11}^{n} N_1^n
- \mu_{12}^{n} (nx_1^*-N_1^n) \bigr]
\nonumber\\[3pt]
& \mspace{60mu} + 
   \beta k  \Tilde{x}_2 \,\abs{\Tilde{x}_2}^{k-2}
\bigl[\lambda_2^{n}  -  \mu_{22}^{n} n x^*_2  \bigr]
- k  \mu^{n}_{12} \abs{\Tilde{x}_1}^{k} 
- \beta k  \mu^{n}_{22} \abs{\Tilde{x}_2}^{k}\nonumber \\[5pt]
& \;=\; \order(\sqrt n)\,
\bigl(\abs{\Tilde{x}_1}^{k-1}+\beta\abs{\Tilde{x}_2}^{k-1}\bigr)
- k  \mu^{n}_{12} \abs{\Tilde{x}_1}^{k} 
- \beta k  \mu^{n}_{22} \abs{\Tilde{x}_2}^{k}\,.
\end{align}

\smallskip
\emph{Case~2.2\/}:  $x_1 < N_1^n$. 
Then
\begin{equation*}
z^n_{11} =  x_1\,,  \quad z^n_{12} =0\,,
\quad z^n_{22} = x_2\,, \quad q_1^n=q_2^n=0\,.
\end{equation*}

We obtain
\begin{align}\label{EC2.2-aux}
F_n^{(2)}(x) &\;=\;
k  \Tilde{x}_1 \,\abs{\Tilde{x}_1}^{k-2}
\bigl[\lambda_1^{n} - \mu_{11}^{n}x_1\bigr]
 + \beta k  \Tilde{x}_2 \,\abs{\Tilde{x}_2}^{k-2}
\bigl[\lambda_2^{n} - \mu_{22}^{n} (\Tilde{x}_2+n x_2^*) \bigr] 
\nonumber \\[5pt]
& \;\le\;k  \Tilde{x}_1 \,\abs{\Tilde{x}_1}^{k-2}
\bigl[\lambda_1^{n}- \mu_{11}^{n} N_1^n- \mu_{12}^{n} N_{12}^n\bigr]
+\beta k  \Tilde{x}_2 \,\abs{\Tilde{x}_2}^{k-2}
\bigl[\lambda_2^{n}  - \mu_{22}^{n} n x_2^*\bigr]
\nonumber \\[3pt]
& \mspace{60mu}
+k \mu_{12}^{n}  N_{12}^n\,\Tilde{x}_1 \,\abs{\Tilde{x}_1}^{k-2}
-\beta k \mu_{22}^{n}\,\abs{\Tilde{x}_2}^{k}\,.  
\end{align}
Since $x_1\le N_1^n$ we have
\begin{align*}
\mu_{12}^{n}N_2^n\,\Tilde{x}_1 &\;\le\; -\frac{\mu_{12}^{n}N_2^n}{N_1^n}
\abs{\Tilde{x}_1}^2\\[3pt]
&\;=\; -\frac{\mu_{12}\nu_2}{\nu_1}\abs{\Tilde{x}_1}^2
+\order(\sqrt n) \abs{\Tilde{x}_1}\,.
\end{align*}
Thus, \eqref{EC2.2-aux} takes the form
\begin{equation}\label{EC2.2}
F_n^{(2)}(x) \;\le\; \order(\sqrt n)\,
\bigl(\abs{\Tilde{x}_1}^{k-1}+\beta\abs{\Tilde{x}_2}^{k-1}\bigr)
-k  \xi^*_{12} \frac{\mu_{12}\nu_2}{\nu_1}\,\abs{\Tilde{x}_1}^{k}
-\beta k \mu_{22}^{n}\,\abs{\Tilde{x}_2}^{k}\,.
\end{equation}

\smallskip\noindent
\emph{Case~3\/}: $x_1\ge N_1^n+  N_{12}^n$ and  $x_2< N_{22}^n$.
We distinguish two subcases.

\smallskip
\emph{Case~3.1\/}: $x_1+x_2\ge  N_1^n+N_2^n$. 

Then
\begin{equation*}
z^n_{11} =  N_1^n\,,  \quad z^n_{12} = N_2^n - x_2\,,
\quad z^n_{22} = x_2\,, \quad q_1^n= x_1+x_2-N_1^n-N_2^n\,,
\quad q_2^n=0\,.
\end{equation*}

We have
\begin{align}\label{EC3.1}
F_n^{(2)}(x) &\;=\;
k  \Tilde{x}_1 \,\abs{\Tilde{x}_1}^{k-2}
\bigl[\lambda_1^{n}  
- \mu_{11}^{n} N_1^n - \mu_{12}^{n} (N_2^n -n x_2^* -\Tilde{x}_2)
- \gamma^{n}_1 (x_1+x_2-N_1^n-N_2^n)\bigr]
\nonumber \\[3pt]
& \mspace{60mu} + \beta k  \Tilde{x}_2 \,\abs{\Tilde{x}_2}^{k-2}
\bigl[\lambda_2^{n}  - \mu_{22}^{n} (n x_2^* +\Tilde{x}_2) \bigr]
\nonumber \\[5pt]
& \;=\;k  \Tilde{x}_1 \,\abs{\Tilde{x}_1}^{k-2}
\bigl[\lambda_1^{n}- \mu_{11}^{n} N_1^n- \mu_{12}^{n}  N_{12}^n
+ \mu_{12}^{n} (n x_2^*+  N_{22}^n)\bigr]
\nonumber \\[3pt]
& \mspace{60mu}
- k  \Tilde{x}_1 \,\abs{\Tilde{x}_1}^{k-2}
\gamma^{n}_1 (nx_1^*+nx_2^*-N_1^n-N_2^n)
+\beta k  \Tilde{x}_2 \,\abs{\Tilde{x}_2}^{k-2}
\bigl[\lambda_2^{n}  - \mu_{22}^{n} n x_2^*\bigr]
\nonumber \\[3pt]
& \mspace{120mu}
+k  (\mu_{12}^{n}-\gamma^{n}_1)
\Tilde{x}_1\,\Tilde{x}_2 \,\abs{\Tilde{x}_1}^{k-2}
-k \gamma^{n}_1\,\abs{\Tilde{x}_1}^{k}
-\beta k \mu_{22}^{n}\,\abs{\Tilde{x}_2}^{k}
\nonumber \\[5pt]
&\;=\;
\order(\sqrt n)\,
\bigl(\abs{\Tilde{x}_1}^{k-1}+\beta\abs{\Tilde{x}_2}^{k-1}\bigr)
+k  (\mu_{12}^{n}-\gamma^{n}_1)
\Tilde{x}_1\,\Tilde{x}_2 \,\abs{\Tilde{x}_1}^{k-2}  \nonumber\\[3pt]
& \mspace{320mu}
 -k \gamma^{n}_1\,\abs{\Tilde{x}_1}^{k} 
-\beta k \mu_{22}^{n}\,\abs{\Tilde{x}_2}^{k}\,. 
\end{align}

\smallskip
\emph{Case~3.2\/}: $x_1+x_2<  N_1^n+N_2^n$.
Then
\begin{equation*}
z^n_{11} =  N_1^n\,,
\quad z^n_{12} = \Tilde{x}_1+ n x_1^*-N_1^n\,,
\quad z^n_{22} = x_2\,, \quad q_1^n= q_2^n=0\,.
\end{equation*}

We have
\begin{align}\label{EC3.2}
F_n^{(2)}(x) &\;=\;
k  \Tilde{x}_1 \,\abs{\Tilde{x}_1}^{k-2}
\bigl[\lambda_1^{n} - \mu_{11}^{n} N_1^n
- \mu_{12}^{n} (\Tilde{x}_1+ n x_1^*-N_1^n)\bigr]
\nonumber \\[3pt]
& \mspace{60mu} + \beta k  \Tilde{x}_2 \,\abs{\Tilde{x}_2}^{k-2}
\bigl[\lambda_2^{n} - \mu_{22}^{n} (n x_2^* +\Tilde{x}_2) \bigr] 
\nonumber \\[5pt]
& \;=\;k  \Tilde{x}_1 \,\abs{\Tilde{x}_1}^{k-2}
\bigl[\lambda_1^{n}- \mu_{11}^{n} N_1^n- \mu_{12}^{n}  N_{12}^n
- \mu_{12}^{n} (n x_1^*-N_1^n- N_{12}^n)\bigr]
\nonumber \\[3pt]
& \mspace{60mu}
+\beta k  \Tilde{x}_2 \,\abs{\Tilde{x}_2}^{k-2}
\bigl[\lambda_2^{n}  - \mu_{22}^{n} n x_2^*\bigr]
-k \mu_{12}^{n}\,\abs{\Tilde{x}_1}^{k}
-\beta k \mu_{22}^{n}\,\abs{\Tilde{x}_2}^{k} \nonumber \\
& \;=\;  \order(\sqrt n)\,
\bigl(\abs{\Tilde{x}_1}^{k-1}+\beta\abs{\Tilde{x}_2}^{k-1}\bigr)
-k \mu_{12}^{n}\,\abs{\Tilde{x}_1}^{k}
-\beta k \mu_{22}^{n}\,\abs{\Tilde{x}_2}^{k} \,. 
\end{align}

\smallskip\noindent
\emph{Case~4\/}: $x_1< N_1^n+  N_{12}^n$ and  $x_2\ge N_{22}^n$.
Here we distinguish four subcases.

\smallskip
\emph{Case~4.1\/}:
$x_1\le N_1^n$ and $x_2\le N_2^n$.
Using the argument used in Case~2.2,
we obtain the same estimate as \eqref{EC2.2}.

\smallskip
\emph{Case~4.2\/}:
$x_1\le N_1^n$ and $x_2> N_2^n$.
Then
\begin{equation*}
z^n_{11} =  x_1\,,  \quad z^n_{12} =0\,,
\quad z^n_{22} = N_2^n\,, \quad q_1^n=0\,,\quad q_2^n=x_2-N_2^n\,.
\end{equation*}
We use the inequality
\begin{equation*}
\mu_{22}^n  N_{12}^n
+ \gamma_2^n (\Tilde{x}_2+ n x_{2}^* - N_{2}^n)
\;\ge\; (\mu_{22}^n\wedge \gamma_2^n) \, \Tilde{x}_2+\order(\sqrt n)\,,
\qquad x_2 > N_2^n
\end{equation*}
to write
\begin{align*}
\lambda_2^{n} - \mu_{22}^{n}  N_2^n - \gamma^n_2 (x_2 - N_2^n)
&\;=\;\lambda_2^{n} - \mu_{22}^{n}  N_2^n
- \gamma^n_2 (\Tilde{x}_2+n x^*_2 - N_2^n)\\[3pt]
&\;\le\; \lambda_2^{n} - \mu_{22}^{n}  N_{22}^n
+(\mu_{22}^n\wedge \gamma_2^n) \, \Tilde{x}_2+\order(\sqrt n)\,.
\end{align*}
Therefore, as in Case~2.2,  we obtain
\begin{align}\label{EC4.2} % used
F_n^{(2)}(x) &\;\le\; k  \order(\sqrt n)\,\abs{\Tilde{x}_1}^{k-1}
+\beta k  \Tilde{x}_2 \,\abs{\Tilde{x}_2}^{k-2}
\bigl[\lambda_2^{n} - \mu_{22}^{n}  N_{22}^n
+\order(\sqrt n)\bigr]\nonumber\\[3pt]
&\mspace{60mu}
-k  \xi^*_{12} \frac{\mu_{12}\nu_2}{\nu_1}\,\abs{\Tilde{x}_1}^{k}
-\beta k  (\mu_{22}^n\wedge \gamma_2^n)\,\abs{\Tilde{x}_2}^{k}
\nonumber\\[5pt]
&\;\le\; \order(\sqrt n)\,
\bigl(\abs{\Tilde{x}_1}^{k-1}+\beta\abs{\Tilde{x}_2}^{k-1}\bigr)
-k   \xi^*_{12} \frac{\mu_{12}\nu_2}{\nu_1}\,\abs{\Tilde{x}_1}^{k}
-\beta k  (\mu_{22}^n\wedge \gamma_2^n)\,\abs{\Tilde{x}_2}^{k}\,.
\end{align}

\smallskip
\emph{Case~4.3\/}:
$x_1> N_1^n$ and $x_1+x_2<  N_1^n+N_2^n$.
Then
\begin{equation*}
z^n_{11} =  N_1^n\,,  \quad z^n_{12} =x_1-N_1^n\,,
\quad z^n_{22} = x_2\,, \quad q_1^n=0\,,\quad q_2^n=0\,.
\end{equation*}

We obtain
\begin{align}\label{EC4.3} % used
F_n^{(2)}(x) &\;=\;
k  \Tilde{x}_1 \,\abs{\Tilde{x}_1}^{k-2}
\bigl[\lambda_1^{n} - \mu_{11}^{n}N_1^n - \mu_{12}^n(\Tilde{x}_1
+nx_1^*-N_1^n)\bigr]\nonumber \\[3pt]
&\mspace{60mu} + \beta k  \Tilde{x}_2 \,\abs{\Tilde{x}_2}^{k-2}
\bigl[\lambda_2^{n} - \mu_{22}^{n} (\Tilde{x}_2+n x_2^*) \bigr] 
\nonumber \\[5pt]
&\;=\;
k  \Tilde{x}_1 \,\abs{\Tilde{x}_1}^{k-2}
\bigl[\lambda_1^{n} - \mu_{11}^{n}N_1^n - \mu_{12}^n N_{12}^n
-\mu_{12}^n(nx_1^*-N_1^n- N_{12}^n)\bigr]\nonumber \\[3pt]
&\mspace{60mu} + \beta k  \Tilde{x}_2 \,\abs{\Tilde{x}_2}^{k-2}
\bigl[\lambda_2^{n} -\mu_{22}^{n} N_{22}^n
- \mu_{22}^{n} (n x_2^*- N_{22}^n) \bigr]\nonumber \\[3pt]
&\mspace{120mu}
-k \mu_{12}^{n}\abs{\Tilde{x}_1}^{k}
-\beta k \mu_{22}^{n}\abs{\Tilde{x}_2}^{k}
\nonumber \\[5pt]
& \;=\;\order(\sqrt n)\,
\bigl(\abs{\Tilde{x}_1}^{k-1}+\beta\abs{\Tilde{x}_2}^{k-1}\bigr)
-k \mu_{12}^{n}\abs{\Tilde{x}_1}^{k}
-\beta k \mu_{22}^{n}\abs{\Tilde{x}_2}^{k}\,.  
\end{align}

\smallskip
\emph{Case~4.4\/}.
$x_1> N_1^n$ and $x_1+x_2\ge N_1^n+N_2^n$.
Then
\begin{equation*}
Z^n_{11} =  N_1^n\,,  \quad Z^n_{12} =x_1-N_1^n\,,
\quad Z^n_{22} = N_2^n+N_1^n-x_1\,, \quad Q_1^n=0\,,\quad Q_2^n=
x_1+x_2-N_1^n-N_2^n\,.
\end{equation*}
Therefore, we obtain
\begin{align}\label{EC4.4}
F_n^{(2)}(x) &\;=\;
q \beta_1 \Tilde{x}_1 \,\abs{\Tilde{x}_1}^{q-2}
\bigl[\lambda_1^{n} - \mu_{11}^{n}N_1^n - \mu_{12}^n(\Tilde{x}_1
+nx_1^*-N_1^n)\bigr]\nonumber \\[3pt]
&\qquad + q \beta_2 \Tilde{x}_2 \,\abs{\Tilde{x}_2}^{q-2}
\bigl[\lambda_2^{n} - \mu_{22}^{n} (N_2^n+N_1^n-x_1)
-\gamma_2^n \bigl(x_2 - (N_2^n+N_1^n-x_1) \bigr)\bigr] 
\nonumber \\[5pt]
& \;\le \;  q \beta_1 \Tilde{x}_1 \,\abs{\Tilde{x}_1}^{q-2}
\bigl[\lambda_1^{n} - \mu_{11}^{n}N_1^n - \mu_{12}^n N_{12}^n
-\mu_{12}^n(\Tilde{x}_1 + nx_1^*-N_1^n-N_{12}^n)\bigr]\nonumber \\[3pt]
&\qquad \qquad \qquad  + q \beta_2 \Tilde{x}_2 \,\abs{\Tilde{x}_2}^{q-2}
\bigl[\lambda_2^{n} - (\mu_{22}^{n} \wedge \gamma_2^n) (\Tilde{x}_2 + nx_2^*)  \bigr] 
\nonumber \\[5pt]
& \;\le\;\order(\sqrt n)\,
\bigl(\beta_1\abs{\Tilde{x}_1}^{q-1}+\beta_2\abs{\Tilde{x}_2}^{q-1}\bigr)
-q \beta_1\mu_{12}^{n}\abs{\Tilde{x}_1}^{q}
-q \beta_2 (\mu_{22}^{n} \wedge \gamma_2^n)  \abs{\Tilde{x}_2}^{q}\,,
\end{align}
where the first inequality follows by observing that 
\begin{equation*}
\mu_{22}^{n} (N_2^n+N_1^n - x_1) + \gamma_2^n \bigl(x_2 - (N_2^n+N_1^n-x_1) \bigr)
\;\ge\; (\mu_{22}^{n} \wedge \gamma_2^n) x_2\,,
\end{equation*}
since $x_2 \ge N_2^n+N_1^n-x_1$ and $N_2^n+N_1^n - x_1> N_{22}^n+N_1^n - x_1 > 0$.  

\smallskip
By Young's inequality, we have 
\begin{equation*}
\abs{\Tilde{x}_1}^{k-1} \abs{\Tilde{x}_2} \;\le\;
\epsilon\abs{\Tilde{x}_1}^k
+ \frac{1}{\epsilon^{k-1}} \abs{\Tilde{x}_2}^k \,,
\end{equation*}
\begin{equation*}
\abs{\Tilde{x}_2}^{k-1} \abs{\Tilde{x}_1} \;\le\;
\epsilon\abs{\Tilde{x}_1}^k +
\frac{1}{\epsilon^{\frac{1}{k-1}}} \abs{\Tilde{x}_2}^k 
\end{equation*}
for any $\epsilon>0$.
Using this in \eqref{EC3.1} in combination with
\eqref{EC1}--\eqref{EC2.1},
\eqref{EC2.2}
and \eqref{EC3.2}--\eqref{EC4.4},
we can choose the constant $\beta$ 
properly so that  
\begin{align} \label{st-p-p6}
\cL_n^{\Check{z}^{n}} f_n(x) &\;\le\; \sum_{i=1}^2  \biggl( \order(n)
\order\bigl(\abs{\Tilde{x}_i}^{k-2}\bigr)
+ \order(\sqrt{n}) \order\bigl(\abs{\Tilde{x}_i}^{k-1}\bigr)  \biggr) 
-  \Tilde{C}_2\sum_{i=1}^2  \abs{\Tilde{x}_i}^{k} \,,
\end{align}
for some positive constant $\Tilde{C}_2$.
Now applying Young's inequality again to the first two terms on the right hand
side of \eqref{st-p-p6}, we obtain
\begin{align*}
\order(\sqrt{n}) \order(\abs{\Tilde{x}_i}^{k-1}) &\;\le\;
\epsilon\bigl( \order(\abs{\Tilde{x}_i}^{k-1}) \bigr)^{\nicefrac{k}{(k-1)}}
+ \epsilon^{1-k} \bigl(\order(\sqrt{n})\bigr)^k \,,\\[5pt]
\order(n) \order(\abs{\Tilde{x}_i}^{k-2}) &\;\le\;
\epsilon\bigl( \order(\abs{\Tilde{x}_i}^{k-2}) \bigr)^{\nicefrac{k}{(k-2)}}
+ \epsilon^{1-\nicefrac{k}{2}} \bigl(\order(n)\bigr)^{\nicefrac{k}{2}}
\end{align*}
for any $\epsilon>0$.
This shows that can choose $\beta$, $C_1$ and $C_2$ appropriately to obtain the claim
in \eqref{PP3.1b}.

Recall $\Hat{x}^n$ in \eqref{Td-xn} and let $\Hat{q}^n_i := q^n_i/\sqrt{n}$
for $i=1,2$. 
Concerning the claim in \eqref{upper-p1} with $\Hat{X}^n$ replaced by $\Hat{Q}^n$
we observe that 
in Case~1, $\Hat{q}^n_1 = \Hat{x}_1^n + \order(1)$, and
$\Hat{q}^n_2= \Hat{x}_2^n + \order(1)$,
in Case~3.1, $\Hat{q}^n_1 =  \Hat{x}_1^n +  \Hat{x}_2^n +\order(1)$,
and $\Hat{q}^n_2 =0$,
in Case~4.2,  $\Hat{q}^n_1=0$, and  $\Hat{q}^n_2  \le  \Hat{x}_2^n + \order(1)$,
in Case~4.4, $\Hat{q}^n_1=0$, and 
$\Hat{q}^n_2  =   \Hat{x}_1^n + \Hat{x}_2^n + \order(1)$,
and in all the other cases,  $\Hat{q}^n_1=\Hat{q}^n_2=0$. 
The same claim for $\Hat{Y}^n$ then follows from the balance equation \eqref{E-sumbal}.
The proof of the proposition is complete. 
\end{proof}

\begin{proof}[Proof of Lemma~\ref{L9.1}] 
We need to show \eqref{PP3.1b} holds for $\cL_n f_n(x)$ under the
scheduling policy $z^{n}[v]$ in Definition~\ref{D7.1}. 
We can write  $\cL_n f_n(x)= F^{(1)}_n (x)+ F^{(2)}_n(x)$ with $F^{(1)}_n (x)$ 
and $F^{(2)}_n(x)$ given by \eqref{E-Fn1} and \eqref{E-Fn2} respectively.
We obtain \eqref{PP3.1d} for  $F^{(1)}_n (x)$ since \eqref{PP3.1c}
also holds under the policy $z^{n}[v]$. 
For $F^{(2)}_n (x)$, by \eqref{E-Fn2} and Definition \ref{D7.1}, since
the control $v$ satisfies \eqref{EL6.4a} and $x\in\sX^n$
(JWC being satisfied), we easily obtain
\begin{equation*}
F^{(2)}_n(x) \;\le \; \order(\sqrt n)\,
\bigl(\abs{\Tilde{x}_1}^{k-1}+\beta\abs{\Tilde{x}_2}^{k-1}\bigr)
-  \Tilde{C}_3\sum_{i=1}^2  \abs{\Tilde{x}_i}^{k} \,,
\end{equation*}
for some positive constant $\Tilde{C}_3$. Thus, following the argument
in the proof of Proposition \ref{P3.1}, we obtain the claim in
\eqref{PP3.1b} and hence the result follows by scaling. 
\end{proof}

\smallskip
%%%%%%%%%%%%%%%%%%%%%%%%%%%%%%%%%%%%%%%%%%%%%%%%%%%%%%%%%%%%%%%%%%%%%
\section{Proof of Theorem~\ref{T4.3}}\label{App2}

Recall $\sH_{\mathsf{f}}(\uptheta)$ defined in \eqref{E-sH-f-theta}. 
As in Theorem~\ref{T4.2} there exists 
$\uplambda^{*}\in\RR$ such that
\begin{equation*}
\inf_{\uppi\,\in\,\sH_{\mathsf{f}}(\uptheta)}\;\uppi(r_{\mathsf{o}}) \;=\;
\inf_{\uppi\,\in\,\eom}\;\uppi(h_{\uptheta,\uplambda^{*}})
\;=\;\varrho^*_{\mathsf{f}}\,,
\end{equation*}
and the property in \eqref{E-apriori} implies that the infimum is attained
in some $\uppi^*\in\eom$.
Therefore, the conclusions
analogous to parts (a) and (b) of Theorem~\ref{T4.2} hold.
Part (e) is also standard.
It remains to derive the HJB equation and the characterization of optimality
corresponding to Theorem~\ref{T4.2}\,(c)--(d).
This is broken in a series of lemmas.

We need to introduce some notation.
We denote by $\tc_\delta$, $\delta>0$, the \emph{first exit time} of a process
from $B^c_\delta$, i.e.,
\begin{equation*}
\tc_\delta \;\df\;\inf\;\{t>0\,\colon\,X_{t}\not\in B^c_\delta\}\,.
\end{equation*}
We denote by $\Usm^\star$ the class of Markov controls $v$ satisfying
$\uppi_v(r_{\mathsf{o}})<\infty$, and by $\eom^\star$ the corresponding
class of ergodic occupation measures.

By  the method of proof of \eqref{E-apriori}
there exists inf-compact $\sV\in\Cc^2(\RR^2)$ and positive
constants $\kappa_1$ and $\kappa_2$ satisfying
\begin{equation}\label{ET4.3a}
\Lg^u\sV(x) \;\le\; \kappa_1 -\kappa_2 \abs{x}^m
+ r_{\mathsf{o}} (x,u)\qquad \forall (x,u)\in\RR^2\times\Act\,.
\end{equation}
Moreover, since $(e\cdot x)^-\in\sorder(\abs{x}^m)$, there exists a constant
$\kappa_0$ such that
\begin{equation}\label{ET4.3b}
(1+\theta)\lambda^* r_j(x,u)\;\le\; \kappa_0 + \frac{\kappa_2}{2}\, \abs{x}^m \qquad
\forall (x,u)\in\RR^2\times\Act\,,\quad j=1,2\,.
\end{equation}
For $\epsilon>0$ we define
\begin{equation*}
h^{\epsilon}(x,u) \;\df\; h_{\uptheta,\uplambda^*} (x,u)
+ \epsilon\kappa_2 \abs{x}^m\,.
\end{equation*}

\begin{lemma}\label{LA2.1}
The following hold:
\begin{align*}
\uppi (h_{\uptheta,\uplambda^{*}}) &\;\le\;
\kappa_0 + \tfrac{\kappa_1}{2} + \tfrac{3}{2}
\uppi (r_{\mathsf{o}})\qquad\forall\,\uppi\in\eom^\star\,,\\[5pt]
\uppi (r_{\mathsf{o}}) &\;\le\; \kappa_0 + \tfrac{\kappa_2}{2} +
\uppi( h_{\uptheta,\uplambda^*})\qquad \forall\, \uppi\in\eom\,,\\[5pt]
\uppi (h^\epsilon) &\;\le\;
\epsilon\bigl( \kappa_0 + \tfrac{\kappa_1}{2}+\tfrac{\kappa_2}{2}\bigr)
+(1+\epsilon) \uppi (h_{\uptheta,\uplambda^{*}})\,.
\end{align*}
\end{lemma}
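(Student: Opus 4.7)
The plan is to derive all three inequalities from two basic ingredients. The first is the pointwise cross-term bound
\begin{equation*}
\babs{\uplambda^*\bigl(r_1(x,u)-\uptheta\,r_2(x,u)\bigr)}\;\le\;
\kappa_0 + \tfrac{\kappa_2}{2}\abs{x}^m\,,\qquad (x,u)\in\RR^2\times\Act\,,
\end{equation*}
which is a direct consequence of \eqref{ET4.3b} applied with $j=1$ and $j=2$, combined with the nonnegativity of $r_1,r_2$. The second is the moment--cost estimate
\begin{equation*}
\kappa_2\,\uppi\bigl(\abs{\,\cdot\,}^m\bigr)\;\le\;\kappa_1+\uppi(r_{\mathsf{o}})\,,
\end{equation*}
obtained by integrating \eqref{ET4.3a} against $\uppi$ and using the defining property $\uppi(\Lg^u\sV)=0$ of an ergodic occupation measure. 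This estimate is meaningful precisely when $\uppi(r_{\mathsf{o}})<\infty$, that is when $\uppi\in\eom^\star$.

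Given these, the first inequality follows by writing $h_{\uptheta,\uplambda^*}=r_{\mathsf{o}}+\uplambda^*(r_1-\uptheta r_2)$, applying the cross-term bound to get
\begin{equation*}
\uppi(h_{\uptheta,\uplambda^*})\;\le\;\uppi(r_{\mathsf{o}})+\kappa_0
+\tfrac{\kappa_2}{2}\uppi(\abs{\,\cdot\,}^m)\,,
\end{equation*}
and substituting the moment--cost estimate to replace $\tfrac{\kappa_2}{2}\uppi(\abs{\,\cdot\,}^m)$ by $\tfrac{\kappa_1}{2}+\tfrac{1}{2}\uppi(r_{\mathsf{o}})$. The second inequality is obtained symmetrically from $r_{\mathsf{o}}=h_{\uptheta,\uplambda^*}-\uplambda^*(r_1-\uptheta r_2)$ and the same pointwise bound; on $\eom\setminus\eom^\star$, where the moment estimate is vacuous, the inequality is trivial since $r_{\mathsf{o}}$ dominates $r_1,r_2$ at infinity (recall $\Tilde{m}<m$), so $\uppi(r_{\mathsf{o}})=\infty$ forces $\uppi(h_{\uptheta,\uplambda^*})=\infty$ as well. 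The third inequality then follows by expanding $\uppi(h^\epsilon)=\uppi(h_{\uptheta,\uplambda^*})+\epsilon\kappa_2\,\uppi(\abs{\,\cdot\,}^m)$, invoking the moment--cost estimate, and using the second inequality to re-express $\uppi(r_{\mathsf{o}})$ in terms of $\uppi(h_{\uptheta,\uplambda^*})$ and constants; this produces the factor $1+\epsilon$ in front of $\uppi(h_{\uptheta,\uplambda^*})$ and sweeps the remaining constants into the $\epsilon$-bundle on the right.

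The principal technical obstacle is justifying $\uppi(\Lg^u\sV)=0$ for the inf-compact, unbounded Lyapunov function $\sV\in\Cc^2(\RR^2)$, since ergodic occupation measures by definition annihilate $\Lg^u$ only against compactly supported smooth test functions. I would handle this by a standard truncation argument: apply It\^o's formula to $\sV(X_t)$ under the stationary law associated with the Markov control that generates $\uppi$, stop at the first exit time from a ball $B_R$, take expectations under this stationary law, and let $R\to\infty$, using the a priori estimate \eqref{E-apriori} together with the polynomial growth of $\sV$ to pass to the limit via monotone convergence for the positive part of $\Lg^u\sV$ and dominated convergence for the negative part. Once this step is secured, the remaining manipulations are the routine algebraic combinations sketched above.
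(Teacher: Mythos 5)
Your decomposition into the pointwise cross-term bound (from \eqref{ET4.3b}) and the moment--cost estimate (from \eqref{ET4.3a} and $\uppi(\Lg^u\sV)=0$) is exactly the ``easy calculation'' the paper has in mind, and the truncation argument you sketch for justifying $\uppi(\Lg^u\sV)=0$ on $\eom^\star$ is the standard and correct way to secure that step. The routine algebra then produces all three inequalities, modulo unimportant constants (your bookkeeping will not reproduce the paper's $\kappa_2/2$ in the second line nor the $\kappa_1/2$ in the third line exactly, but this is a typographical issue in the paper, not a flaw in your argument).

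There is, however, one concrete slip in your handling of the second inequality on $\eom\setminus\eom^\star$. You assert that ``$r_{\mathsf{o}}$ dominates $r_1,r_2$ at infinity (recall $\Tilde{m}<m$),'' but this is false: $r_{\mathsf{o}}(x,u)$ is supported on $\{e\cdot x>0\}$, while $r_1,r_2$ are supported on $\{e\cdot x<0\}$, so they have disjoint supports and there is no pointwise domination. Along $x_n=(-n,-n)$ one has $r_{\mathsf{o}}(x_n,u)=0$ while $r_j(x_n,u)\to\infty$; the hypothesis $\Tilde{m}<m$ is irrelevant to comparing them. The conclusion you want ($\uppi(r_{\mathsf{o}})=\infty\Rightarrow\uppi(h_{\uptheta,\uplambda^*})=\infty$) is nonetheless correct, but for a different reason: precisely because the supports are disjoint, $h_{\uptheta,\uplambda^*}=r_{\mathsf{o}}$ on $\{e\cdot x>0\}$, hence $h^+_{\uptheta,\uplambda^*}\ge r_{\mathsf{o}}$ pointwise, and therefore $\uppi(h^+_{\uptheta,\uplambda^*})\ge\uppi(r_{\mathsf{o}})=\infty$. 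Combined with integrability of $h^-_{\uptheta,\uplambda^*}$ (needed anyway for $\uppi(h_{\uptheta,\uplambda^*})$ to be well defined), this gives $\uppi(h_{\uptheta,\uplambda^*})=\infty$, and the inequality is indeed vacuously true. Fix that sentence and the rest stands.
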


\begin{proof}
This is an easy calculation using \eqref{ET4.3a}--\eqref{ET4.3b}.
\end{proof}

\begin{lemma}\label{LA2.2}
There exists a unique function $V^{\varepsilon}\in\Cc^{2}(\RR^{2})$
with $V^{\varepsilon}(0)=0$,
which is bounded below in $\RR^{2}$,
and solves the HJB
\begin{equation}\label{ET4.3c}
\min_{u\in\Act}\;\bigl[\Lg^{u}V^{\varepsilon}(x)
+ h^\epsilon(x,u)\bigr]
\;=\; \varrho_\epsilon\,,\quad x\in\RR^{2}\,.
\end{equation}
where $\varrho_{\varepsilon}\df\inf_{\uppi\in\eom}\;\uppi(h^\epsilon)$,
and the usual characterization of optimality holds.
Moreover,
\begin{itemize}
\item[\upshape{(}a\upshape{)}]
for every $R>0$, there exists a constant $k_{R}>0$ such that
\begin{equation*}
\sup_{\epsilon\in(0,1)}\;\osc_{B_{R}}\;V^{\varepsilon}\;\le\;k_{R} \,;
\end{equation*}

\item[\upshape{(}b\upshape{)}]
if $v_{\varepsilon}$ is a measurable a.e.\ selector
from the minimizer of the Hamiltonian in \eqref{ET4.3c}, then for any $\delta>0$,
we have
\begin{equation*}
V^{\varepsilon}(x)\;\ge\;\Exp^{v_{\varepsilon}}_{x}
\biggl[\int_{0}^{\tc_{\delta}}\bigl(h^\epsilon(X_{s},v_{\varepsilon}(X_{s}))
-\varrho_{\varepsilon}\bigr)\,\D{s}\biggr]
+\inf_{B_{\delta}}\;V^{\varepsilon}\,;
\end{equation*}

\item[\upshape{(}c\upshape{)}]
for any stationary control $v\in\Usm^\star$ and for any $\delta>0$,
it holds that
\begin{equation*}
V^{\varepsilon}(x)\;\le\;
\Exp^{v}_{x}\biggl[\int_{0}^{\tc_{\delta}}
\bigl(h^\epsilon\bigl(X_{s},v(X_{s})\bigr)
-\varrho_{\varepsilon}\bigr)\,\D{s} + V^{\varepsilon}(X_{\tc_{\delta}})\biggr]\,.
\end{equation*}
\end{itemize}
\end{lemma}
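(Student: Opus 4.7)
The plan is to obtain $V^\epsilon$ by the vanishing discount method, exploiting the fact that the perturbation $\epsilon\kappa_2\abs{x}^m$ renders $h^\epsilon$ near-monotone. Indeed, since $r_{\mathsf{o}}\ge 0$ and $r_j\in\sorder(\abs{x}^m)$ by the hypothesis $\Tilde m<m$, the bound \eqref{ET4.3b} yields $\inf_{u\in\Act}h^\epsilon(x,u)\to+\infty$ as $\abs{x}\to\infty$. Together with Lemma~\ref{L4.1} (which, combined with Lemma~\ref{LA2.1}, supplies some $v\in\sU$ with $\uppi_v(h^\epsilon)<\infty$), this places the problem into the near-monotone ergodic-control framework of Chapter~3 of \cite{book}.

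First I would introduce, for each $\alpha>0$, the $\alpha$-discounted value function $V^\epsilon_\alpha(x)\df\inf_{U\in\Uadm}\Exp^U_x\bigl[\int_0^\infty\E^{-\alpha t}h^\epsilon(X_t,U_t)\,\D{t}\bigr]$ and argue via standard elliptic theory that $V^\epsilon_\alpha\in\Cc^2(\RR^2)$ solves the discounted HJB $\alpha V^\epsilon_\alpha(x)=\min_{u\in\Act}[\Lg^u V^\epsilon_\alpha(x)+h^\epsilon(x,u)]$. The crucial step, and the main obstacle, is to show that $\alpha V^\epsilon_\alpha(x)\to\varrho_\epsilon$ uniformly on compact sets and that the normalized functions $\Bar V^\epsilon_\alpha\df V^\epsilon_\alpha-V^\epsilon_\alpha(0)$ have uniformly bounded oscillation on each ball $B_R$. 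This requires combining the near-monotonicity of $h^\epsilon$ (which forces $V^\epsilon_\alpha$ to concentrate near $\varrho_\epsilon/\alpha$ on a suitably large ball), the Foster--Lyapunov estimate of Lemma~\ref{L4.1} to control expected hitting times of balls uniformly in $\alpha$, and Harnack's inequality applied to the linear elliptic operator $\Lg^{v_\alpha}$, where $v_\alpha$ is an a.e.\ selector of the minimizer in the discounted HJB.

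With these bounds in place, standard elliptic interior estimates permit extraction of a subsequential limit $V^\epsilon\in\Cc^2(\RR^2)$ with $V^\epsilon(0)=0$; passing to the limit $\alpha\downarrow 0$ in the discounted HJB yields \eqref{ET4.3c}. Statement (a) is then obtained by retracing the oscillation argument and observing that the Foster--Lyapunov constants, together with the $\epsilon$-free majorant $h^\epsilon\le h_{\uptheta,\uplambda^*}+\kappa_2\abs{x}^m$, do not depend on $\epsilon\in(0,1)$, which yields an $\epsilon$-uniform bound on $\osc_{B_R}V^\epsilon$.

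Finally, parts (b) and (c) are proved by the standard It\^o--Dynkin argument: apply It\^o's formula to $V^\epsilon(X_{t\wedge\tc_\delta})$ on $[0,T\wedge\tc_\delta]$ using \eqref{ET4.3c}. In (b), the minimizer $v_\epsilon$ realizes equality in the HJB, so letting $T\to\infty$ and invoking Fatou's lemma with the moment estimate \eqref{E-apriori} gives the lower bound; in (c), any $v\in\Usm^\star$ produces only the inequality $\Lg^v V^\epsilon+h^\epsilon(\cdot,v)\ge\varrho_\epsilon$, which yields the matching upper bound, with the integrability of the stochastic-integral term controlled by the polynomial growth of $\nabla V^\epsilon$ inherited from (a). Boundedness below of $V^\epsilon$ then follows from (b) together with $\inf_u h^\epsilon(\cdot,u)$ being bounded below outside a sufficiently large ball, and uniqueness of $V^\epsilon$ under the normalization $V^\epsilon(0)=0$ follows by comparing any two solutions through the stochastic representations (b)--(c).
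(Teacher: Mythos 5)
Your overall plan is right: the vanishing-discount method in the near-monotone framework of Chapter~3 of \cite{book}, which is precisely the template of Theorem~3.3 in \citet{ABP14} that the paper cites, together with the standard It\^o--Dynkin arguments for parts (b)--(c) and uniqueness via the stochastic representations. Those parts are correct and align with the paper.

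However, there is a genuine gap at exactly the point the paper flags as requiring special care, namely part (a). You assert that the Foster--Lyapunov constants together with the $\epsilon$-free upper bound $h^\epsilon\le h_{\uptheta,\uplambda^*}+\kappa_2\abs{x}^m$ suffice for an $\epsilon$-uniform oscillation estimate. But the obstruction is the \emph{lower} bound, not the upper bound: $h_{\uptheta,\uplambda^*}$ is not bounded below (the term $-\uplambda^*\uptheta\,r_2$ can be very negative), and the perturbation $\epsilon\kappa_2\abs{x}^m$ dominates only past a threshold that blows up as $\epsilon\downarrow0$; indeed one checks easily that $\inf_{x,u}h^\epsilon(x,u)\to-\infty$ as $\epsilon\downarrow0$. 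Consequently the Harnack-type oscillation bound for the discounted value functions $\Bar V^\epsilon_\alpha$ cannot be claimed with $\epsilon$-free constants by simply inspecting the majorant; the positivity input to Harnack degenerates. The paper fixes this by combining \eqref{ET4.3a} (the Lyapunov function $\sV$ absorbing $r_{\mathsf{o}}$) with \eqref{ET4.3b} (which controls $(1+\uptheta)\uplambda^* r_j$ by $\kappa_0+\tfrac{\kappa_2}{2}\abs{x}^m$, \emph{uniformly in $\epsilon$}), deducing via It\^o's formula that adding the fixed constant $2\kappa_0+\kappa_1$ to $h^\epsilon$ makes the resulting $\alpha$-discounted value function strictly positive on any ball $B_R$ for all small $\alpha$ — a bound that truly is $\epsilon$-uniform. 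It is this shifted discounted value function to which the Harnack/oscillation lemmas (Lemmas~3.5--3.6 of \cite{ABP14}) are applied. Your proposal needs this step; without it the claim that all constants are $\epsilon$-free is false.
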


\begin{proof}
The proof follows along the lines of Theorem~3.3 in \citet{ABP14},
using the fact that $h^\epsilon$ is inf-compact, for each $\epsilon>0$,
and $\inf_{\uppi\in\eom}\;\uppi(h^\epsilon)<\infty$ by
Lemmas~\ref{L4.1} and \ref{LA2.1}.
There is one important difference though:  the running cost
$h^\epsilon$ is not bounded below uniformly in $\epsilon>0$,
and the estimate in part (a) needs special attention.
By \eqref{ET4.3a}--\eqref{ET4.3b}, using It\^o's formula, we obtain
\begin{equation*}
\Exp^{U}_{x}\biggl[\int_{0}^{\infty} \E^{-\alpha s}\,
(1+\theta)\lambda^* r_i(X_{s},U_{s})\,\D{s}\biggr]
\;\le\;\sV(x)+ \Exp^{U}_{x}\biggl[\int_{0}^{\infty}\E^{-\alpha s}\,
\bigl(\kappa_0 + \tfrac{\kappa_1}{2}+
\tfrac{1}{2}r_{\mathsf{o}}(X_{s},U_{s})\bigr)\,\D{s}\biggr]
\end{equation*}
for all $U\in\Uadm$, $\alpha>0$.
It follows that, given any ball $B_R$, the discounted value function
\begin{equation*}
\Tilde{V}^\epsilon_{\alpha}(x)\;\df\;
\inf_{U\in\Uadm}\;
\Exp^{U}_{x}\biggl[\int_{0}^{\infty}\E^{-\alpha s}\,
\bigl(2\kappa_0 + \kappa_1+
h^\epsilon(X_{s},U_{s})\bigr)\,\D{s}\biggr]
\end{equation*}
is strictly positive on $B_R$ for all sufficiently small $\alpha>0$.
Therefore, by adding the constant $2\kappa_0 + \kappa_1$ to the running
cost, we obtain estimates on the oscillation of $\Tilde{V}^\epsilon$
that are uniform over $\epsilon>0$
by Lemmas~3.5 and 3.6 of \cite{ABP14}.
\end{proof}

The next lemma completes the proof of Theorem~\ref{T4.3}.

\begin{lemma}%\label{LA2.3}
Let $V^{\varepsilon}$ and $\varrho_{\varepsilon}$,
for $\varepsilon>0$, be as in Lemma~\ref{LA2.2}.
The following hold:
\begin{itemize}
\item[\upshape{(}i\upshape{)}]
The function $V^{\varepsilon}$ converges
to some $V_{\mathsf{f}}\in \Cc^{2}(\RR^{2})$, uniformly on compact sets,
and $\varrho_{\varepsilon}\to\varrho^*_{\mathsf{f}}$, as $\varepsilon\searrow0$,
and $V_{\mathsf{f}}$ satisfies
\begin{equation}\label{ET4.3d}
\min_{u\in\Act}\;\bigl[\Lg^{u}V_{\mathsf{f}}(x)+h_{\uptheta,\uplambda^{*}}(x,u)\bigr]
\;=\;\varrho^*_{\mathsf{f}}\;=\; \uppi^{*}(h_{\uptheta,\uplambda^{*}})\,.
\end{equation}
Also, any limit point $v^{*}$ $($in the topology of Markov controls$)$
as $\varepsilon\searrow0$ of measurable selectors
$\{v_{\varepsilon}\}$ from the minimizer of \eqref{ET4.3c} satisfies
\begin{equation*}
\Lg^{v^{*}}V_{\mathsf{f}}(x)+h_{\uptheta,\uplambda^{*}}(x,v^{*}(x))\;=\;
\varrho^{*}_{\mathsf{f}}\quad\text{a.e.\ in~} \RR^{2}\,.
\end{equation*}

\item[\upshape{(}ii\upshape{)}]
A stationary Markov control $v\in\Usm$
is optimal if and only if it satisfies 
\begin{equation}\label{ET4.3e}
H_{h_{\uptheta,\uplambda^{*}}}\bigl(x,\nabla V_{\mathsf{f}}(x)\bigr) \;=\;
b\bigl(x, v(x)\bigr)\cdot \nabla V_{\mathsf{f}}(x)
+ h_{\uptheta,\uplambda^{*}}\bigl(x,v(x)\bigr)
\quad\text{a.e.~in~} \RR^{2}\,,
\end{equation}
where $H_{h_{\uptheta,\uplambda^{*}}}$ is defined in \eqref{E-H} with $r$
replaced by $h_{\uptheta,\uplambda^{*}}$.

\item[\upshape{(}iii\upshape{)}]
The function $V_{*}$ has the stochastic representation
\begin{align*}
V_{\mathsf{f}}(x)&\;=\;
\lim_{\delta\searrow0}\;\inf_{v\,\in\,\Usm^{\star}}\;
\Exp^{v}_{x}\biggl[\int_{0}^{\tc_{\delta}}
\bigl(h_{\uptheta,\uplambda^{*}}
\bigl(X_{s},v(X_{s})\bigr)-\varrho_{*}\bigr)\,\D{s}\biggr]\\[5pt]
&\;=\;
\lim_{\delta\searrow0}\;\Exp^{\Bar{v}}_{x}\biggl[\int_{0}^{\tc_{\delta}}
\bigl(h_{\uptheta,\uplambda^{*}}
\bigl(X_{s},\Bar{v}(X_{s})\bigr)-\varrho_{*}\bigr)\,\D{s}\biggr]\nonumber
\end{align*}
for any $\Bar{v}\in\Usm$ that satisfies \eqref{ET4.3e}.
\end{itemize}
\end{lemma}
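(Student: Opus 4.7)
The plan is to pass to the limit as $\varepsilon\searrow0$ in the approximating HJB equation \eqref{ET4.3c} of Lemma~\ref{LA2.2}. Three ingredients drive the argument: the uniform oscillation bound in Lemma~\ref{LA2.2}\,(a), the two-sided control on $\varrho_\varepsilon$ supplied by Lemma~\ref{LA2.1}, and the stochastic representations (b)--(c) of Lemma~\ref{LA2.2}. First I would show $\varrho_\varepsilon\to\varrho^*_{\mathsf{f}}$: since $h^\varepsilon\ge h_{\uptheta,\uplambda^*}$ we have $\varrho_\varepsilon\ge\inf_{\uppi\in\eom}\uppi(h_{\uptheta,\uplambda^*})=\varrho^*_{\mathsf{f}}$, while testing against the optimizer $\uppi^{*}\in\eom^{\star}$ and using the third estimate of Lemma~\ref{LA2.1} gives $\varrho_\varepsilon\le\uppi^{*}(h^\varepsilon)\le\varepsilon(\kappa_0+\tfrac{\kappa_1+\kappa_2}{2})+(1+\varepsilon)\varrho^{*}_{\mathsf{f}}$.

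Next, for the value functions, the normalization $V^\varepsilon(0)=0$ combined with Lemma~\ref{LA2.2}\,(a) yields $\sup_\varepsilon\|V^\varepsilon\|_{L^\infty(B_R)}<\infty$ for every $R>0$. Rewriting \eqref{ET4.3c} as a linear equation with locally uniformly bounded right-hand side, interior $W^{2,p}$ estimates followed by Sobolev embedding and a diagonal argument extract a subsequence with $V^{\varepsilon_k}\to V_{\mathsf{f}}$ in $\Cc^{1,\alpha}_{\mathrm{loc}}(\RR^2)$. Since $h^\varepsilon\to h_{\uptheta,\uplambda^*}$ uniformly on compact subsets of $\RR^2\times\Act$, and $\Act$ is compact, the Hamiltonian passes to the limit and $V_{\mathsf{f}}$ solves \eqref{ET4.3d}; elliptic regularity upgrades $V_{\mathsf{f}}$ to $\Cc^2$. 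For any limit point $v^{*}$ of the selectors $\{v_{\varepsilon_k}\}$ in the topology of Markov controls, a standard uniform-convergence-on-compacts argument shows that $v^{*}$ attains the minimum in \eqref{ET4.3d}, proving part~(i). For part~(ii), the ``if'' direction follows by applying It\^o's formula to $V_{\mathsf{f}}$ along the process driven by any $v$ satisfying \eqref{ET4.3e}, stopping at $\tc_\delta\wedge T$, using the a~priori moment bound \eqref{E-apriori} and the polynomial growth of $V_{\mathsf{f}}$ inherited from Lemma~\ref{LA2.2}\,(a) to pass to the limit $T\to\infty$; the ``only if'' direction follows by integrating the inequality $\Lg^u V_{\mathsf{f}}+h_{\uptheta,\uplambda^*}\ge\varrho^{*}_{\mathsf{f}}$ against $\uppi_v$, which forces pointwise equality a.e.\ since $\uppi_v(h_{\uptheta,\uplambda^*})=\uppi_v(r_{\mathsf{o}})=\varrho^{*}_{\mathsf{f}}$ on the constraint manifold.

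For part~(iii), I would pass $\varepsilon\to0$ in Lemma~\ref{LA2.2}\,(b) along the selected subsequence, using Fatou on the nonnegative part of the integrand and the already-established convergence $V^{\varepsilon}\to V_{\mathsf{f}}$ on $B_\delta$, to obtain
\begin{equation*}
V_{\mathsf{f}}(x)\;\ge\;\Exp^{v^{*}}_{x}\!\left[\int_{0}^{\tc_\delta}
\bigl(h_{\uptheta,\uplambda^*}(X_s,v^{*}(X_s))-\varrho^{*}_{\mathsf{f}}\bigr)\D{s}\right]
+\inf_{B_\delta}V_{\mathsf{f}}\,,
\end{equation*}
and similarly pass $\varepsilon\to0$ in Lemma~\ref{LA2.2}\,(c) for any $v\in\Usm^{\star}$ to get the matching upper bound. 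Combining these, using continuity of $V_{\mathsf{f}}$ at the origin and $V_{\mathsf{f}}(0)=0$ to conclude $\inf_{B_\delta}V_{\mathsf{f}}\to 0$, and noting that $v^{*}\in\Usm^{\star}$ since it attains the minimum in the HJB (so it achieves the infimum on the right), yields the two representations.

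The main obstacle is that $h_{\uptheta,\uplambda^*}=r_{\mathsf{o}}+\uplambda^{*}(r_1-\uptheta r_2)$ is not bounded below on $\RR^2$ whenever $\uplambda^{*}>0$, so the framework of \cite{ABP14} does not apply directly; the penalty $\varepsilon\kappa_2\abs{x}^m$ is used precisely to restore inf-compactness of the running cost. The delicate point is that every estimate passed to the limit must be uniform in $\varepsilon$. This is where Lemma~\ref{LA2.1} is indispensable: the inequalities there allow one to dominate $\uppi(h^\varepsilon)$ by a controlled multiple of $\uppi(h_{\uptheta,\uplambda^*})$ plus harmless additive constants independent of $\varepsilon$, which in turn produces uniform discounted-cost bounds on a fixed ball and thus the uniform oscillation estimate in Lemma~\ref{LA2.2}\,(a). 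Without this uniformity the limiting function $V_{\mathsf{f}}$ would not be well defined, and the stochastic representation securing uniqueness of the classical solution to the unbounded-below HJB would fail.
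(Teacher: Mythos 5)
Your high-level strategy (pass to the limit $\varepsilon\searrow0$ in the penalized HJB \eqref{ET4.3c}, using Lemma~\ref{LA2.1} and the uniform oscillation bound) matches the paper's, and parts~(i) and~(iii) are argued in essentially the same spirit that the paper delegates to Lemma~3.9 of \cite{ABP14}. The gap is in part~(ii), and it is exactly the issue the paper flags as the rare obstacle in this problem: $h_{\uptheta,\uplambda^*}$ is not bounded below, so there is no blanket stability, and you must prove that a control $v$ satisfying \eqref{ET4.3e} is in $\Usm^\star$ before the It\^o/ergodic argument can be invoked. You apply It\^o's formula and then ``pass to the limit $T\to\infty$ using the a~priori moment bound \eqref{E-apriori} and the polynomial growth of $V_{\mathsf{f}}$''---but \eqref{E-apriori} controls moments in terms of $\Exp\int r_{\mathsf{o}}$, which is precisely the quantity you do not yet know is finite under $v$; and without positive recurrence of the $v$-process, the ergodic occupation measure $\uppi_v$ need not even exist. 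The oscillation bound in Lemma~\ref{LA2.2}\,(a) controls $\osc_{B_R}V^\varepsilon$, which does give polynomial growth of $|V_{\mathsf{f}}|$, but the argument needs the sharper statement $V_{\mathsf{f}}^-\in\sorder(\sV)$ together with a Foster--Lyapunov inequality to proceed.

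The paper closes this gap in two steps that your write-up omits. First, it proves $V_{\mathsf{f}}^-\in\sorder(\sV)$ (via the method of Lemma~3.10 of \cite{ABP14}); second, for any $\Hat{v}$ satisfying \eqref{ET4.3e}, it sharpens \eqref{ET4.3b} to absorb the indefinite part of $h_{\uptheta,\uplambda^*}$ into a fraction $\tfrac{\kappa_2}{4}\abs{x}^m$ and shows
\begin{equation*}
\Lg^{\Hat{v}}(\sV+2V_{\mathsf{f}}) \;\le\;
\kappa_0+\kappa_1 +2 \kappa'_0 - \tfrac{\kappa_2}{2} \abs{x}^m
-h_{\uptheta,\uplambda^{*}}\bigl(x,\Hat{v}(x)\bigr)\,,
\end{equation*}
so that $\sV+2V_{\mathsf{f}}$ (inf-compact because $V_{\mathsf{f}}^-\in\sorder(\sV)$) is a Lyapunov function for the $\Hat{v}$-dynamics. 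This simultaneously delivers $\Hat{v}\in\Usm^\star$ and the transience control $\tfrac{1}{T}\Exp^{\Hat{v}}_x[V_{\mathsf{f}}^-(X_T)]\to0$ that justifies the It\^o limit $T\to\infty$. Without this Lyapunov construction the ``if'' direction in part~(ii) is not established, and consequently neither is the second identity in part~(iii), which requires $\Bar{v}\in\Usm^\star$ for the same reason. You should insert the derivation of $V_{\mathsf{f}}^-\in\sorder(\sV)$ and the $\sV+2V_{\mathsf{f}}$ Lyapunov argument before invoking It\^o in part~(ii).
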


\begin{proof}
We follow the method in the proof of Theorem~3.4 in \cite{ABP14}.
Since $\varrho_\epsilon$ is non-increasing and bounded below,
it converges to some value which is clearly
$\uppi^{*}(h_{\uptheta,\uplambda^{*}})$ by Lemma~\ref{LA2.1}.
Parts (i) and (iii) then follow as in the proof of Lemma~3.9 in \cite{ABP14},
and we can follow the method in the proof of Lemma~3.10 in the same paper
to establish that $V_{\mathsf{f}}^- \in\sorder(\sV)$.

Now let $\Hat{v}\in\Usm$ be any control satisfying \eqref{ET4.3e}.
We modify the estimate in \eqref{ET4.3b} and write it as
$(1+\theta)\lambda^* r_i(x,u)\;\le\; \kappa_0 + \frac{\kappa_2}{4}\,
\abs{x}^m$
for some constant $\kappa'_0$.
An easy calculation using \eqref{ET4.3a}
then shows that
\begin{equation*}
\Lg^{\Hat{v}}(\sV+2V_{\mathsf{f}}) \;\le\;
\kappa_0+\kappa_1 +2 \kappa'_0 - \frac{\kappa_2}{2} \abs{x}^m
-h_{\uptheta,\uplambda^{*}}\bigl(x,\Hat{v}(x)\bigr)\,.
\end{equation*}
Therefore, since $\sV+2V_{\mathsf{f}}$ is inf-compact, 
we must have $\Hat{v}\in\Usm^\star$.
Using this and the fact that $V_{\mathsf{f}}^- \in\sorder(\sV)$,
we deduce that $\frac{1}{T} \Exp^{\Hat{v}}_{x}\bigl[V^-_{\mathsf{f}}(X_T)] \to 0$
as $T\to\infty$.
Hence, by It\^o's formula and \eqref{ET4.3d} we obtain
$\uppi_{\Hat{v}}(h_{\uptheta,\uplambda^{*}})\le\varrho^*_{\mathsf{f}}$.
Thus we must have equality
$\uppi_{\Hat{v}}(h_{\uptheta,\uplambda^{*}})=
\uppi^*(h_{\uptheta,\uplambda^{*}})$,
i.e., $\Hat{v}$ is optimal.
This completes the proof.
\end{proof}

\smallskip
%%%%%%%%%%%%%%%%%%%%%%%%%%%%%%%%%%%%%%%%%%%%%%%%%%%%%%%%%%%%%%%%%%%%%
\section*{Acknowledgements}
The authors thank the referees for the helpful comments that have improved the paper.  
This research was supported in part by the Army Research Office under
grant W911NF-17-1-0019.
The work of Ari Arapostathis was also supported in part by the Office of Naval
Research through grant N00014-14-1-0196.
The work of Guodong Pang is also supported in part by the Marcus Endowment Grant at the 
Harold and Inge Marcus Department of Industrial and Manufacturing Engineering 
at Penn State.

\end{document}